\newtheorem{theorem}{Theorem}[section]
\newtheorem{corollary}[theorem]{Corollary}
\newtheorem{hypothesis}[theorem]{Hypothesis}
\newtheorem{lemma}[theorem]{Lemma}
\newtheorem{proposition}[theorem]{Proposition}
\newtheorem{remark}[theorem]{Remark}
\newtheorem{properties}[theorem]{Properties}
\numberwithin{equation}{section}
\newcommand{\N}{\mathbb{N}}
\newcommand{\Z}{\mathbb{Z}}
\newcommand{\R}{\mathbb{R}}
\newcommand{\C}{\mathbb{C}}
\newcommand{\Ebb}{\mathbb{E}}
\newcommand{\Lbb}{\mathbb{L}}
\newcommand{\Sbb}{\mathbb{S}}
\newcommand{\Tbb}{\mathbb{T}}
\newcommand{\Bcal}{\mathcal{B}}
\newcommand{\Ccal}{\mathcal{C}}
\newcommand{\Ecal}{\mathcal{E}}
\newcommand{\Gcal}{\mathcal{G}}
\newcommand{\Jcal}{\mathcal{J}}
\newcommand{\Lcal}{\mathcal{L}}
\newcommand{\Mcal}{\mathcal{M}}
\newcommand{\Ncal}{\mathcal{N}}
\newcommand{\Pcal}{\mathcal{P}}
\newcommand{\Zcal}{\mathcal{Z}}
\newcommand{\afrak}{\mathfrak{a}}
\newcommand{\norm}[2]{\left\| #1 \right\|_{#2}}
\newcommand{\dd}{\;{\rm d}}
\newcommand{\de}{{\rm d}}
\DeclareMathOperator{\Leb}{Leb}
\DeclareMathOperator{\Lip}{Lip}
\DeclareMathOperator{\Card}{Card}
\title{Central limit theorems for the $\Z^2$-periodic Lorentz gas}
\author{Fran\c{c}oise P\`ene}
\address{Univ Brest, Universit\'e de Brest, IUF, Institut Universitaire de France, LMBA,
Laboratoire de Math\'ematiques de Bretagne Atlantique, UMR CNRS 6205, 6 avenue Le Gorgeu, 29238 Brest Cedex, France}
\email{francoise.pene@univ-brest.fr}
\author{Damien Thomine}
\address{D\'epartement de Math\'ematiques d'Orsay, Universit\'e Paris-Sud, 
UMR CNRS 8628, F-91405 Orsay Cedex, France}
\email{damien.thomine@math.u-psud.fr}
\date\today
\begin{document}

\begin{abstract}
This paper is devoted to the study of the stochastic properties
of dynamical systems preserving an infinite measure. More precisely
we prove central limit theorems for Birkhoff sums of observables of
$\Z^2$-extensions of dynamical systems (satisfying some nice spectral properties).
The motivation of our paper is the $\Z^2$-periodic Lorentz process
for which we establish a functional central limit theorem for H\"older continuous observables
(in discrete time as well as in continuous time).
\end{abstract}

\maketitle

\section{Introduction}
A measure preserving dynamical system is given by a transformation $T$ or a flow $(Y_t)_{t\ge 0}$ preserving a measure.
When the measure is a probability, the study of the stochastic properties of such a dynamical system consists in studying the probabilistic properties of families of 
stationary
random variables of the form $(\phi\circ T^k)_{k\ge 0}$ or $(\phi\circ Y_t)_{t\ge 0}$
for reasonnable observables, with a particular interest in the study of the Birkhoff sums, which are given by
\begin{equation*}
S_n\phi=\sum_{k=0}^{n-1}\phi\circ T^k\quad\mbox{or}\quad S_t\phi=\int_0^t\phi\circ Y_s\dd s.
\end{equation*}
When the measure is a probability, the study of these quantities have been 
intensively studied  in the last half a century, with an always increasing interest.
A first question is the law of large number (LLN), that is the almost sure convergence
of $\left({S_t\phi}/t \right)_{t>0}$ to the integral $I(\phi)$ of $\phi$ as $t\rightarrow +\infty$, which
happens to be true for any integrable function $f$ as soon as the system is ergodic (due to the Birkhoff-Khinchin theorem).
A second natural question is the establishment of central limit theorems (CLT), i.e.
of the convergence in distribution of $\left( S_t\phi/\sqrt{t}\right)_t$ ( as $t\rightarrow +\infty$) to a Gaussian random variable for centered square integrable
observables $\phi$, or, even more, the functional CLT (FCLT) that is the convergence of the family of processes $\left( (S_{st}\phi/\sqrt{t})_s\right)_t$ to a Brownian motion $(B_s)_s$, as $t\rightarrow +\infty$. In practice, CLT and FCLT hold true for smooth observables when the system is chaotic enough (satisfying nice mixing properties, see \cite{Ratner:1973,GuivarchHardy:1988,HennionHerve:2001}, etc.).\\ \smallskip

When the measure is infinite, it is natural to adress analogous questions, but the results are of different nature (we refer to \cite{Aaronson:1997} for a general reference on dynamical systems preserving an infinite measure). The first analogue of the LLN is given by the Hopf theorem, which states the almost everywhere convergence of $\left(S_t\phi/S_t\psi\right)_n$ to the ratio $I(\phi)/I(\psi)$ of the integrals of $\phi$ and $\psi$, for all couples of integrable functions $(\phi,\psi)$ with $\psi\not\equiv 0$, as soon as the system is conservative ergodic.
A second analogue of the LLN is the convergence in distribution in the strong sense of  $\left( S_t \phi/\mathfrak a_t\right)_t$ to a random variable (convergence in distribution in the strong sense means convergence in distribution with respect to any probability measure absolutely continuous with respect to the invariant measure). Note that, due to the Hopf theorem, it is enough to prove this result for a specific function $\phi\not\equiv 0$ to extend it to any integrable function. This second kind of analogue of LLN requires additional assumptions on the dynamical system.
Analogues of CLT for dynamical systems preserving an infinite invariant measure are non-degenerate limit theorems for $(S_t\phi)_t$ for null integral observables $f$. A classical analogue to the CLT in this context consists in establishing the convergence in distribution of $\left( S_t \phi/\mathfrak a_t^{\frac 12}\right)_t$ to some
random variable, with $\mathfrak a_t$ as in the above second analogue of the LLN.\\ \smallskip

The case of dynamical systems that can be represented by a $\mathbb Z^d$-extension over a probability preserving dynamical system is of particular interest.
As mentionned in
 \cite{Thomine:2013,Thomine:2014,Thomine:2015,PeneThomine:2019}, in this specific context, the question of the behaviour of Birkhoff sums is related
to the study of occupation times of $d$-dimensional random walks or Markov chains (see \cite{Dobrushin:1955,Kasahara:1981,Kasahara:1985,Kesten:1962}). Indeed, in the case of a transformation $T$, when the observable $\phi$ depends only on the  $\mathbb Z^d$-label in the $\mathbb Z^d$-extension, the ergodic sum $S_n\phi$ is the exact analogue of additive functionals of $d$-dimensional random walks or Markov chains.
Outside the cases of random walks or Markov chains, first results have been obtained by the second-named author  \cite{Thomine:2013,Thomine:2014,Thomine:2015} for Pomeau-Manneville maps, for $\mathbb Z^d$-extensions of Gibbs-Markov maps, for geodesic flows on a $\mathbb Z^d$-cover of a compact Riemannian variety with negative curvature. In \cite{PeneThomine:2019}, we established CLT in a general context of $\mathbb Z^d$-extensions of a dynamical system with nice spectral properties, including the $\mathbb Z^2$-periodic billiard model, but for observables depending
only on the $\mathbb Z^d$-label.\\ \smallskip

The aim of the present paper is to study ergodic sums of H\"older observables
of  the $\mathbb Z^2$-periodic Sinai billiard and of the
$\mathbb Z^2$-periodic Lorentz process, both with finite horizon. A first step in this direction is the property of conservativity and ergodicity which comes from \cite{JPC,
Simanyi:1989}
(thanks to \cite{Sinai:1970,BunimovichSinai:1981,BunimovichChernovSinai:1991}) and
which, combined with the Hopf theorem, ensures the above mentioned first analogue to the LLN. A second step is the proof 
by Dolgopyat, Sz\'asz and Varj\'u in
\cite{DolgopyatSzaszVarju:2008} of the above mentioned second analogue to the LLN, that in this context is
\begin{equation}\label{LFGNbill}
\forall \phi\in L^1,\quad  \frac{S_t \phi}{\ln t}   \Longrightarrow     I(\phi)\,  \mathcal E\, ,\quad\mbox{as }t\rightarrow +\infty\, ,
\end{equation}
where $\mathcal E$ is an exponential random variable and where $\Longrightarrow$
means the convergence in distribution in the strong sense.
A third step in this direction is the CLT with a normalization in $\sqrt{\ln t}$ obtained in \cite{PeneThomine:2019} for the billiard map and for observables depending only on the $\mathbb Z^d$-level.
In the present paper, our main result is a CLT and even a FCLT 
for H\"older observables $\phi$  (with null expectation) of the $\mathbb Z^2$-periodic Sinai billiard and of the
$\mathbb Z^2$-periodic Lorentz process of the following form
\begin{equation*}
\frac{{S}_{t} \phi}{\sqrt{\ln (t)}}
 \Longrightarrow \widetilde{\sigma}_\phi
\sqrt{\Ecal}\, \Ncal\, ,
\quad\mbox{as }t\rightarrow +\infty\, ,
\end{equation*}
with $\mathcal E$ as in \eqref{LFGNbill} and with $\Ncal$ a standard gaussian random variable independent of $\Ecal$, where $\widetilde\sigma_\phi$ is given by a Green-Kubo formula.
The above convergence result holds true providing $\phi$ satisfies some decay property at infinity. So it holds true at least for compactly supported H\"older functions with null integral.
More precisely, under the same assumptions and for any integrable function $\psi$, we prove the following joint FCLT
\begin{equation*}
 \left(\frac{S_{ts} \psi}{\ln (t)},\frac{S_{ts} \phi}{\sqrt{\ln (t)}}\right)_{s>0}
 \Longrightarrow \left(I(\psi)\, \Ecal, \, \widetilde{\sigma}_\phi \sqrt{\Ecal}\, \Ncal\right)_{s>0}\, ,
\end{equation*}
with the same notations as above. Roughly speaking, this means that, in distribution, 
\begin{equation*}
S_t\phi\approx \ln t \, I(\phi\, )\mathcal E+\sqrt{\ln t\, \mathcal E}\, \tilde\sigma_{\phi-I(\phi)\phi_0}\, \mathcal N+o(\sqrt{\ln t})\, ,\quad\mbox{as}\quad t\rightarrow +\infty\, ,
\end{equation*}
for $\phi,\phi_0$ two H\"older observables decaying quickly enough at infinity, with $I(\phi_0)=1$.
Note that, contrarily to the case of the classical FCLT, the limit we obtain is a process constant in time.
To prove our results, we use two methods producing different formulas for the "asymptotic variance" $\widetilde\sigma_\phi^2$ appearing in the CLT.\\ \smallskip
First, using the method of \cite{PeneThomine:2019}, we establish a general FCLT for $\mathbb Z^2$-extensions over a dynamical system satisfying general nice spectral properties (namely such that the step function satisfies a spectral local limit theorem). The fact that we restrict our study to $\mathbb Z^2$-extension with square integrable step functions (satisfying a classical limit theorem)
simplifies greatly the proof, makes its ideas appear much clearer than in \cite{PeneThomine:2019} and allows the generalization to H\"older functions, without adding technical complications.\\ \smallskip
Second, applying the method of \cite{Thomine:2013,Thomine:2014,Thomine:2015}, we obtain another way, based on induction, to prove the CLT for H\"older observables of the $\mathbb Z^2$-periodic billiard, under a slightly weaker assumption.\\ \medskip

The article is organized as follows. In Section~\ref{sec:main}, we present our context and results. We start by introducing in Section~\ref{subsec:gene} our general context of $\mathbb Z^2$-extensions of dynamical systems (in discrete time as well as in continuous time). In Section~\ref{subsec:bill}, we introduce the $\mathbb Z^2$-periodic Lorentz gas  (in discrete time as well as in continuous time). The rest of Section~\ref{sec:main} is devoted to the exposure of our main results, with a discussion on our technical assumptions.
In Sections~\ref{sec:operators} and \ref{sec:flow}, we prove our FCLT by the first method for
dynamical systems (first in the case of transformations in Section~\ref{sec:operators} and then in the case of flows in Section~\ref{sec:flow}).
In Section~\ref{sec:induc}, we prove the CLT via the second method (using induction).

\section{Context and main results}
\label{sec:main}

\subsection{General context}
\label{subsec:gene}

Given a probability preserving dynamical system $(A, \mu, T)$ and a function 
$F: A\to\Z^2$, we consider the infinite measure preserving 
dynamical system $(\widetilde{A}, \widetilde{\mu}, \widetilde{T})$ given 
by the $\Z^2$-extension of $(A, \mu, T)$ with step function $F$, 
i.e.\ $\widetilde{A}:=A\times\Z^2$, $\widetilde{\mu}=\mu\otimes \mathfrak m$, 
where $\mathfrak m$ is the counting measure on $\Z^2$ and 
$\widetilde{T}(x,a)=(T(x),a+F(x))$. Then, 
for all $(x, a)$ in $\widetilde{A}$ and $n \geq 0$,
\begin{equation*}
 \widetilde{T}^n(x,a) 
 = (T^n(x),a+S_n F(x))\, ,
\end{equation*}
where $S_nF$ is the ergodic sum:
\begin{equation*}
 S_n F
 :=\sum_{k=0}^{n-1}F\circ T^k \, .
\end{equation*}
We are interested in the asymptotic behaviour of the ergodic sums
\begin{equation*}
 \widetilde{S}_n f
 := \sum_{k=0}^{n-1}f\circ\widetilde{T}^k \, ,
\end{equation*}
for observables $f:\widetilde{A} \to \R$ in the particular case when 
$\int_{\widetilde{A}}f \dd \widetilde{\mu} = 0$. 

\smallskip

In the context of this article, the system $(A, \mu, T)$ 
shall be chaotic in a strong sense. More precisely, 
we shall assume that $(S_n F)_n$ satisfies a standard central limit theorem and, even more,  
a \textit{spectral local limit theorem} (see Assumption~\eqref{hyp:0} below), 
which is a strengthening of the more classical local limit theorem:
\begin{equation*}
 \mu(S_n F=a)
 \sim \frac{\Phi(a/\sqrt n)}{n}
\end{equation*}
for all $a \in \Z^2$, where $\Phi$ is the density of the Gaussian 
that is the limit distribution of $(S_n F/\sqrt{n})_n$. 
By Lemma~\ref{lem:0}, Assumption~\eqref{hyp:0} holds when the transfer operator $P$ 
of $T$,
dual to the Koopman operator, acts nicely on a Banach space  $\Bcal$ of 
integrable functions
or distributions.
This assumption is, in particular, satisfied 
by the collision map for Sinai billiards.

\smallskip

We shall also consider continuous-time versions of this problem, in two ways. 
The first way consists in defining the ergodic sums $\widetilde{S}_t f$ 
for real $t>0$ by linearization:
\begin{align*}
 \widetilde{S}_t f
 & :=(\lfloor t\rfloor+1 -t)S_{\lfloor t\rfloor} f +(t-\lfloor t\rfloor)S_{\lfloor t\rfloor +1} f \\
 & = S_{\lfloor t\rfloor} f +(t-\lfloor t\rfloor)f\circ\widetilde{T}^{\lfloor t\rfloor},
\end{align*}
which can be used to state functional limit theorems.

\smallskip

The second way consists in working directly with a continuous-time system. 
Given a measurable function $\tau: A \to (0,+\infty)$, 
the suspension flow $(\widetilde{\Mcal},\widetilde{\nu},(\widetilde{Y}_t)_t)$ of $(\widetilde{A},\widetilde\mu,\widetilde{T})$ 
with roof function $(x,a) \mapsto \tau(x)$ is the system:
\begin{equation*}
 \left\{
 \begin{array}{rcl}
 \widetilde{\Mcal} & := & \{(x,a,s) \in A\times\Z^2\times(0,+\infty)\, :\, s\in(0,\tau(x))\} \, , \\
 \widetilde{\nu} & := & (\widetilde{\mu}\otimes \Leb)_{|\widetilde{\Mcal}}\,  ,\\
 \widetilde{Y}_t(x,a,s) & := & \left(\widetilde{T}^{n_{t+s}(x)}(x,a),s+t-S_{n_{t+s}(x)}\tau(x)\right)\, ,
 \end{array}
 \right.
\end{equation*}
where $\Leb$ is the Lebesgue measure on $(0,+\infty)$ and 
$n_u(x):=\max\{n\geq 0\, :\, S_n\tau(x)\leq u\}$ for every $u\ge 0$. In this case, 
we define:
\begin{equation*}
 \widetilde{S}_t f
 := \int_0^t f \circ \widetilde{Y}_s \dd s\, .
\end{equation*}

\subsection{$\Z^2$-periodic Lorentz gas}
\label{subsec:bill}

We consider the displacement of a particle moving at unit speed in $\R^2$ with elastic reflection 
on a $\Z^2$-periodic configuration of dispersing obstacles, in finite horizon.

\smallskip

More precisely the billiard domain is given by $\R^2 \setminus\bigcup_{a\in\Z^2} \bigcup_{i=1}^I(O_i+a)$,
with obstacles $\{O_i+a\, ;\, i=1,\ldots,I,\ a\in\Z^2\}$ for some $I\geq 2$.
We assume that $(O_i)_{1 \leq i \leq I}$ is a finite family of precompact open convex sets in $\R^2$, 
whose boundaries are $\Ccal^3$ with non vanishing curvature. We assume that the closure
of the sets $O_i+a$ are pairwise disjoint. We assume moreover that 
$Q$ contains no line (finite horizon assumption).

\smallskip

We are interested in the behaviour of a point particle moving in $Q$ at unit speed, going straight inside $Q$ 
and obeying the Descartes reflection law at reflection times off
$\partial Q=\bigcup_{a\in\mathbb Z^2}\bigcup_{i=1}^I(\partial O_i+a)$.

\smallskip

A configuration is a couple position-speed $(q,\vec v)\in Q\times \Sbb^1$.
To avoid ambiguity, we allow only post-collisional vectors at reflection times, so that the configuration space is
\begin{equation*}
 \widetilde \Mcal
 :=\{(q,\vec v)\in Q\times \Sbb^1 :\, q\in\partial Q\ \Rightarrow\ \langle \vec n_q,\vec v\rangle\ge 0\}\, ,
\end{equation*}
where $\vec n_q$ denotes the unit vector normal to $\partial Q$ at $q$
oriented into $Q$.

\smallskip

The \textbf{Lorentz 
process} is the flow $(\widetilde Y_t)_t$ on $\widetilde \Mcal$ such that $\widetilde Y_t(q,\vec v)=(q_t,\vec v_t)$ 
is the configuration at time $t$ of a point particle that has configuration $(q,\vec v)$ at time $0$. 
This flow preserves the restriction $\tilde\nu$ on $\widetilde \Mcal$ of the Lebesgue measure $\Leb$ on $\R^2\times\Sbb^1$, 
normalized so that:
\begin{equation*}
 \Leb([0,1[^2\times\Sbb^1)
 = \frac{\pi}{\sum_{i=1}^I|\partial O_i|}\, .
\end{equation*}
This normalization will allow us to identify canonically this flow with a $\Z^2$-periodic suspension 
flow over a $\Z^2$-extension of a chaotic probability preserving
dynamical system, as described in Subsection~\eqref{subsec:gene}.

\smallskip

The dynamics at reflection times is the \textbf{$\Z^2$-periodic 
billiard system} $\left(\widetilde A,\widetilde\mu,\widetilde T\right)$, 
that is the first return map of the flow $\widetilde Y$ to the Poincar\'e section 
$\partial Q\times\Sbb^1$. Let $\widetilde A:=\{(q,\vec v)\in\widetilde\Mcal :\, q\in\partial Q\}$ 
be the set of configurations of post-collisional vectors off $\partial Q$. 
The map $\widetilde T:\widetilde A\rightarrow\widetilde A$ is the billiard transformation 
mapping a post-collisional configuration to the next post-collisional configuration. 
The measure $\widetilde\mu$ is given by:
\begin{equation}
\label{eq:ExpressionMuTilde}
 \dd\widetilde\mu(q,\vec v)
 = \frac{\cos\varphi}{2\sum_{i=1}^I|\partial O_i|} \dd r\dd\varphi\, ,
\end{equation}
where $r$ is the curvilinear absciss of $q$ on $\partial Q$,
and $\varphi$ is the angular measure in $[-\pi/2,\pi/2]$ of the angle
$\widehat{\left( \vec n_q,\vec v\right)}$.

This $\Z^2$-periodic billiard system $\left(\widetilde A,\widetilde\mu,\widetilde T\right)$
is a $\Z^2$-extension of the corresponding \textbf{Sinai billiard system} 
$(A,\mu, T)$. This Sinai billiard is the quotient of 
$\left(\widetilde A,\widetilde\mu,\widetilde T\right)$ modulo the action of $\Z^2$ 
on the position. 

\includegraphics[trim = 10mm 20mm 10mm 20mm, clip, width=14cm]{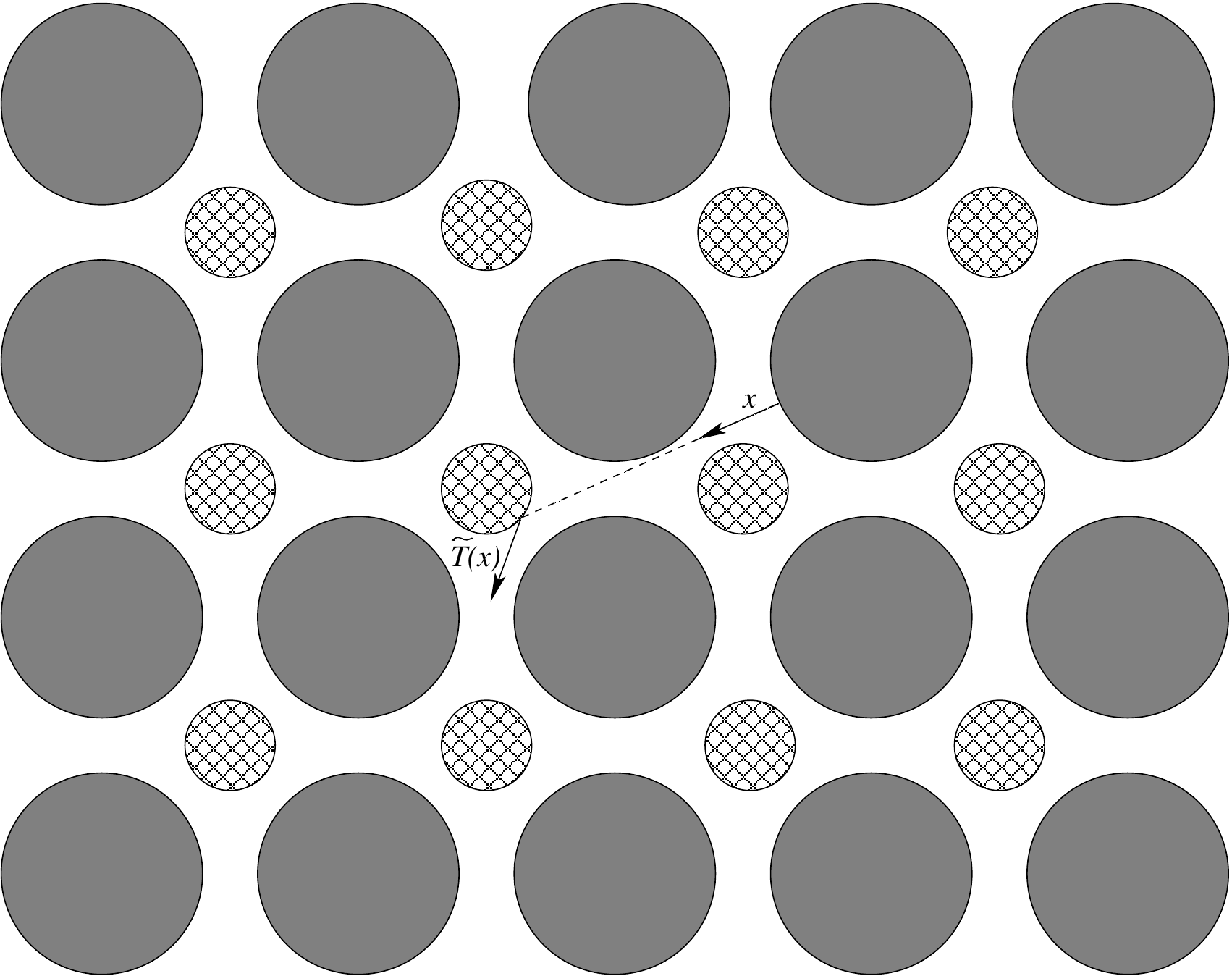}
\includegraphics[trim = 87mm 48.5mm 51mm 51mm, clip, width=4cm]{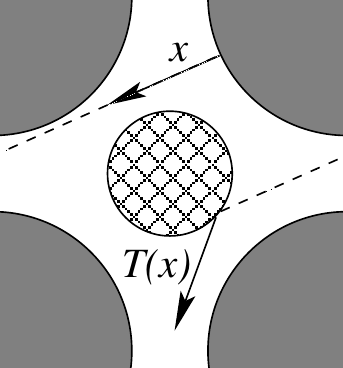}

More explicitely, the configuration set $A$ is the image of $\widetilde A$ 
by $\mathfrak p:\R^2\times\Sbb^1\rightarrow \Tbb^2\times\Sbb^1$ given by
$\mathfrak p(q,\vec v)=(\bar q,\vec v)$ where $\bar q=q+\Z^2\in\Tbb^2:=\R^2/\Z^2$.
By $\Z^2$-periodicity of $Q$, there exists a map $T:A\rightarrow A$ such that $T\circ\mathfrak p=\mathfrak p\circ \widetilde T$, 
which is the billiard map in the domain $\overline Q\subset \Tbb^2$
image of $Q$ by the canonical projection $\R^2\rightarrow\Tbb^2$.
The measure $\mu$ has the same expression as in Equation~\eqref{eq:ExpressionMuTilde}.

\smallskip

The function $F : A \to \Z^2$ giving the size of the jumps is defined by $F (q,\vec v) = b-a$ 
whenever 
$\widetilde T (q,\vec v,a) \in \bigsqcup_{i=1}^I (\partial O_i+b) \times \mathbb S^1$;
the $\Z^2$-periodicity of the billiard 
table ensures that this function is well-defined.

\smallskip


Let $\tau : A \to \R_+^*$ be the free path length of a particle on $\overline Q$ :
\begin{equation*}
 \tau(q,\vec v)
 = \min\{s>0 :\, q+s\vec v\in\partial 
\overline Q
\}\, .
\end{equation*}
By $\Z^2$-periodicity of $Q$, the function $\widetilde{\tau}: \, (q,\vec v, a) \mapsto \tau(q,\vec v)$ 
defined on $\widetilde A$ is the free path length of a particle on $Q$. 
The Lorentz gas $\left(\widetilde\Mcal,\tilde\nu,(Y_t)_t\right)$ is then canonically 
identified with the suspension flow over $\left(\widetilde A,\widetilde\mu,\widetilde T\right)$ 
with roof function $\widetilde{\tau}$.


\subsection{Results for transformations}

We state our main limit theorem under abstract conditions; our other results -- 
applications to billiards or to continuous-time systems -- will follow from that.

\begin{theorem}
\label{thm:gene}

Assume that $(\widetilde{A}, \widetilde{\mu}, \widetilde{T})$ is conservative and ergodic.
Let $\eta>0$ and $p$, $p^* \in [1, \infty]$ such that $p^{-1}+(p^*)^{-1}=1$. 
Let $(\Bcal,\norm{\cdot}{\Bcal})$ be a Banach space (of functions or distributions) containing $\mathbf{1}:=\mathbf{1}_A$ and such that
\begin{itemize}
\item either $p=1$ (so $p^*=\infty$) and $\Ebb_\mu[\cdot]$ is continuously extended from $\Bcal\cap L^1(A,\mu)$ to $\Bcal$;
\item or $p>1$ and $\Bcal \hookrightarrow \Lbb^p (A, \mu)$ (where $\hookrightarrow$ is a continous injection).
\end{itemize}
Assume moreover the following spectral local limit condition:
\begin{equation}
\label{hyp:0}
\sup_{a\in\Z^d,\ h\in\Bcal\, :\, \norm{h}{\Bcal} \leq 1} \norm{P^\ell \left(\mathbf{1}_{\{S_\ell F=a\}}\, h\right) -\frac{\Phi\left(\frac{a}{\sqrt{\ell}}\right)}{\ell}\Ebb_\mu[h] }{\Bcal} 
= O(\ell^{-1-\eta})\, ,
\end{equation}
where $\Phi$ is a two-dimensional non-degenerate Gaussian density function.

\smallskip

Let $f$, $g:\widetilde{A} \to \R$ be such that:
\begin{itemize}
 \item $\sum_{a\in\Z^2} (1+|a|^{\varkappa}) \norm{ f(\cdot,a)\times\cdot}{\Lcal(\Bcal,\Bcal)} <+ \infty$ for some $\varkappa >0$;
 \item $\sum_{a\in\Z^2} \norm{f(\cdot,a)}{\Lbb^{p^*} (A, \mu)} < +\infty$;
 \item $\int_{\widetilde{A}} f \dd\widetilde{\mu} = 0$;
 \item $g \in \Lbb^1 (\widetilde{A}, \widetilde{\mu})$.
\end{itemize}
Then the following sum over $k$ is absolutely convergent:
\begin{equation}
\label{AAA2}
\tilde \sigma^2 (f)
:= \int_{\widetilde{A}} f^2 \dd \widetilde{\mu} + 2\sum_{k \geq 1} \int_{\widetilde{A}} f \cdot f \circ \widetilde{T}^k \dd \widetilde{\mu}.
\end{equation}
Moreover, for every $0 < T_1 < T_2$, as $n\to +\infty$,
\begin{equation}\label{eq:ConvergenceDistributionGene}
\left(\frac{\widetilde{S}_{nt} g}{\ln (n)},\frac{\widetilde{S}_{nt} f}{\sqrt{\ln (n)}}\right)_{t\in[T_1,T_2]}
\longrightarrow \left(\int_{\widetilde{A}} g\dd\widetilde{\mu}\, \Phi(0)\Ecal, \, \widetilde \sigma (f) \sqrt{\Phi(0)\Ecal}\, \Ncal\right)_{t\in[T_1,T_2]}
\end{equation}
in distribution in $C([T_1,T_2])$, with respect to any probability measure 
absolutely continuous\footnote{The property of convergence in distribution with respect to any absolutely continuous probability measure 
is sometimes called \textit{strong convergence in distribution}~\cite{Aaronson:1997}.} with respect to $\widetilde{\mu}$, and where $\Ecal$ and $\Ncal$ 
are two independent random variables, with respectively standard exponential 
and standard Gaussian distributions.
\end{theorem}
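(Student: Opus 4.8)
The plan is to reduce the joint functional statement to two ingredients: (i) a local-in-time analysis showing that on the set where $S_{n_{n t}}F$ visits a prescribed site of $\Z^2$ roughly the "expected" number of times, the Birkhoff sum $\widetilde S_{nt}f$ behaves like a sum of weakly dependent increments governed by the induced dynamics, and (ii) an abstract limit theorem for the pair $\bigl(\text{number of returns to }0,\ \text{accumulated martingale-type sum}\bigr)$. Concretely, I would first treat the case $g$ supported on a single fiber (or, by the Hopf ratio ergodic theorem and the first bullet of the hypotheses on $g$, reduce to $g=\mathbf 1_{A\times\{0\}}$ up to the factor $\int g\,\de\widetilde\mu$), so that $\widetilde S_{nt}g$ is exactly the occupation time $N_{nt}(0):=\#\{0\le k< nt:\ S_kF=0\}$. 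The spectral local limit assumption \eqref{hyp:0}, together with the argument of \cite{PeneThomine:2019}, gives $\E_\mu[N_{nt}(0)]\sim \Phi(0)\ln(nt)\sim \Phi(0)\ln n$ uniformly for $t\in[T_1,T_2]$, and more precisely yields convergence in distribution in the strong sense of $N_{nt}(0)/\ln n$ to $\Phi(0)\Ecal$ (this is the Darling--Kac / Dynkin--Lamperti mechanism: the return sequence is $\ell\mapsto \Phi(0)^{-1}\ell/\ln\ell$-regular, forcing an exponential — equivalently Mittag-Leffler of index $0$ — limit).

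Second, I would expand $\widetilde S_{nt}f$ using the decomposition $f=\sum_{a} f(\cdot,a)\circ\pi_A\cdot \mathbf 1_{\{\text{fiber }a\}}$ and the $\Z^2$-equivariance of $\widetilde T$, so that
\begin{equation*}
\widetilde S_{nt}f=\sum_{k=0}^{\lfloor nt\rfloor-1} \sum_{a\in\Z^2} f\bigl(T^k x,\,a\bigr)\,\mathbf 1_{\{S_kF(x)=a\}} \;+\;(\text{linearization remainder}),
\end{equation*}
i.e.\ $\widetilde S_{nt}f=\sum_{k=0}^{\lfloor nt\rfloor-1}\bar f(T^k x, S_kF(x))$ with $\bar f(x,a):=f(x,a)$. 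The point is to view this as a Birkhoff sum over the $\Z^2$-extension driven by $F$; because $\int_{\widetilde A}f\,\de\widetilde\mu=0$, the summability assumptions on $\sum_a\|f(\cdot,a)\times\cdot\|_{\Lcal(\Bcal,\Bcal)}$ and $\sum_a\|f(\cdot,a)\|_{L^{p^*}}$ let me apply \eqref{hyp:0} term by term to get a Green--Kubo expansion, establishing the absolute convergence of the series \eqref{AAA2} (each correlation $\int_{\widetilde A} f\cdot f\circ\widetilde T^k\,\de\widetilde\mu=\sum_{a,b}\int_A f(\cdot,a)\,f(T^k\cdot,b)\,\mathbf 1_{\{S_kF=b-a\}}\,\de\mu$ decays like $k^{-1}$ by the local limit estimate, but the weight $\Phi((b-a)/\sqrt k)/k$ integrated against the fast-decaying $f$-coefficients is summable). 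For the CLT itself I would pass to the induced system on $A\times\{0\}$ (the return map of the $\Z^2$-walk to the origin) and show that the induced observable $\widehat f$ (sum of $\bar f$ along an excursion between consecutive visits to $0$) is square-integrable with the right variance — this identifies $\widehat f$'s asymptotic variance with $\widetilde\sigma^2(f)/\Phi(0)$ — and then the classical CLT for the Sinai-billiard-type induced map (available from the spectral picture encoded by $\Bcal$ and $P$) gives that the accumulated sum over the first $m$ excursions, normalized by $\sqrt m$, converges to $\sqrt{\widetilde\sigma^2(f)/\Phi(0)}\,\Ncal$. Coupling this with the number-of-excursions count $N_{nt}(0)\approx \Phi(0)\ln n\cdot\Ecal$ and a standard "random change of time" (Anscombe-type) argument produces $\widetilde S_{nt}f/\sqrt{\ln n}\Rightarrow \widetilde\sigma(f)\sqrt{\Phi(0)\Ecal}\,\Ncal$, with $\Ncal$ independent of $\Ecal$ because the limiting Gaussian is independent of the (slowly varying) return-time scale.

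The joint convergence and the process-level (functional in $t$) statement come from the observation, emphasized in the introduction, that the limit is constant in $t$: since $\ln(nt)/\ln n\to 1$ uniformly for $t\in[T_1,T_2]$, the processes $t\mapsto \widetilde S_{nt}g/\ln n$ and $t\mapsto \widetilde S_{nt}f/\sqrt{\ln n}$ are asymptotically constant, so tightness in $C([T_1,T_2])$ reduces to an equicontinuity estimate — controlling $\sup_{|t-t'|\le\delta}|\widetilde S_{nt}f-\widetilde S_{nt'}f|/\sqrt{\ln n}$, which follows from a maximal inequality for the induced sums plus the fact that between times $nt$ and $nt'$ with $t'-t\le\delta$ the walk makes only $O(\delta\ln n)$ extra visits to $0$ in probability (a consequence of the same Darling--Kac estimate). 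Finally, the joint law of $(\Ecal,\Ncal)$: both $\widetilde S_{nt}g/\ln n$ and the excursion count feeding $\widetilde S_{nt}f$ are determined, asymptotically, by the same occupation-time variable, which accounts for the common $\Ecal$; conditionally on the occupation time, the martingale CLT for the induced observable is unaffected (mixing of the base kills the dependence), giving conditional Gaussianity and hence independence of $\Ncal$ from $\Ecal$. The main obstacle I anticipate is step (ii) done \emph{uniformly in $t$ and jointly with $g$}: making the Anscombe/random-time-change rigorous in the infinite-measure, strong-distributional-convergence setting — i.e.\ controlling the fluctuation of the number of excursions finely enough (the error in $N_{nt}(0)=\Phi(0)\Ecal\ln n+o(\ln n)$ must be $o(\ln n)$ with enough uniformity that the resulting change of time perturbs $\sqrt{N}$-normalized sums negligibly) — and simultaneously proving the asymptotic independence of the Gaussian from the exponential; this is exactly where the strengthening from a plain local limit theorem to the \emph{spectral} local limit assumption \eqref{hyp:0}, with its quantitative $O(\ell^{-1-\eta})$ error, is needed, and where the bulk of the technical work (carried out in Sections~\ref{sec:operators}) will lie.
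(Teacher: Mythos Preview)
Your outline follows the induction strategy: pass to the return map on $A\times\{0\}$, prove a CLT for the induced observable $\widehat f$, and compose with the number of returns via an Anscombe-type random time change. This is \emph{not} how the paper proves Theorem~\ref{thm:gene}. Section~\ref{sec:operators} uses the method of moments directly: one computes $\Ebb_\mu\bigl[(\alpha\frac{\Zcal_n g}{\afrak_n}+\beta\frac{\Zcal_n f}{\sqrt{\afrak_n}})^m\bigr]$ by expanding into iterated products of the operators $Q_{\ell,a}(h)=P^\ell(\mathbf{1}_{\{S_\ell F=a\}}h)$, splitting $Q_{\ell,a}=\frac{\Phi(0)}{\ell}\Ebb_\mu[\cdot]+Q^{(1)}_{\ell,a}$ with $\|Q^{(1)}_{\ell,a}\|_{\Lcal(\Bcal,\Bcal)}=O((1+|a|^{2\eta})\ell^{-1-\eta})$ from~\eqref{hyp:0}, and then classifying combinatorially which strings $(\mathbf N,\boldsymbol\varepsilon)$ of exponents and $0/1$-labels survive in the limit. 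The joint law of $(\Ecal,\Ncal)$ and their independence fall out of this moment computation automatically; no separate conditional-independence argument is needed. The functional statement then follows from a one-line $\Lbb^1$ bound (Lemma~\ref{controleMoments0}) showing $\sup_{t\in[T_1,T_2]}|\widetilde S_{nt}f-\widetilde S_{nT_1}f|=O(1)$ in $\Lbb^1$, not from a maximal inequality over excursions. The induction route you describe is the paper's \emph{second} method (Section~\ref{sec:induc}), and there it is carried out only for the billiard (Proposition~\ref{prop:BillardParInduit}), precisely because it requires a Gibbs--Markov structure coming from Young towers to invoke Proposition~\ref{prop:2015}.

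The genuine gap in your proposal is the step ``the classical CLT for the Sinai-billiard-type induced map (available from the spectral picture encoded by $\Bcal$ and $P$)''. Under the bare hypotheses of Theorem~\ref{thm:gene} you have only the estimate~\eqref{hyp:0} for $P$ acting on $\Bcal$; this says nothing about a Banach space on which the transfer operator of the \emph{induced} map $\widetilde T_0$ is quasi-compact, nor does it give $G_\varphi(f)\in\Lbb^2$ (in Section~\ref{sec:induc} this integrability already takes real work, via~\cite[Lemma~4.16]{PeneThomine:2019}, and uses the Markov partition), nor a CLT for its Birkhoff sums. So the Anscombe step has nothing to plug into. A smaller but related slip: your argument for absolute convergence of~\eqref{AAA2} is off. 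The correlations $\int_{\widetilde A}f\cdot f\circ\widetilde T^k\,\de\widetilde\mu$ do \emph{not} decay like $k^{-1}$; the leading contribution $\sum_{a,b}\frac{\Phi(0)}{k}\Ebb_\mu[f(\cdot,a)]\Ebb_\mu[f(\cdot,b)]=\frac{\Phi(0)}{k}\bigl(\int_{\widetilde A}f\,\de\widetilde\mu\bigr)^2$ vanishes because $\int f=0$, and what remains is the $Q^{(1)}$-part of size $O(k^{-1-\eta})$, which is summable. This is exactly the $Q^{(0)}/Q^{(1)}$ mechanism that drives the whole moment proof, and it is what makes the theorem go through at the stated generality without any induced-system analysis.
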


Note that, since $\mathbf{1} \in \Bcal$, the condition $\sum_{a\in\Z^2} (1+|a|^{\varkappa}) \norm{ f(\cdot,a)\times}{\Lcal(\Bcal,\Bcal)} <+ \infty$ 
implies that $\sum_{a\in\Z^2} (1+|a|^{\varkappa}) \norm{ f(\cdot,a)}{\Bcal} <+ \infty$. 
Note also that the random variable $\sqrt{\Ecal} \Ncal$ in Equation~\eqref{eq:ConvergenceDistributionGene} follows 
a standard (centered with variance $1$) Laplace distribution.

\smallskip

The condition $\sum_{a\in\Z^2} \norm{f(\cdot,a)}{\Lbb^{p^*} (A, \mu)} < +\infty$ in Theorem~\ref{thm:gene} 
is only used in the proof of the functional convergence, and is not necessary for the convergence in distribution 
of $\left(\frac{\widetilde{S}_n g}{\ln (n)},\frac{\widetilde{S}_n f}{\sqrt{\ln (n)}}\right)_{n \geq 0}$.

\smallskip

We shall prove Theorem~\ref{thm:gene} using the method of moments. The 
same strategy was used in~\cite{PeneThomine:2019}. However, our setting provides some welcome simplifications, 
allowing us to apply our method to more general observables $f$ than the ones considered in~\cite{PeneThomine:2019}.  
These simplifications come namely from the summability in $\ell$ in Equation~\eqref{hyp:0}, 
as well as the summability of other error terms.

\smallskip

As proved in Lemma~\ref{lem:0}, the hypothesis~\eqref{hyp:0} is satisfied
under quite general spectral assumptions, which are stated in Hypothesis~\ref{hyp:HHH}.
In particular, Hypothesis~\ref{hyp:HHH} holds for the collision map 
associated with Sinai billiards~\cite{DemersPeneZhang}, from which we deduce:

\begin{corollary}
\label{cor:billtransfo}

Let $(\widetilde{A},\widetilde{\mu},\tilde F)$ be the $\Z^2$-periodic billiard system
presented in Subsection~\ref{subsec:bill}. Let $f$, $g:\widetilde{A}\to \R$ be such that:
\begin{itemize}
 \item $f$ is $\eta$-H\"older
 on the continuity domain of $\widetilde T$
 for some $\eta >0$;
 \item $\sum_{a\in\Z^2} |a|^\varkappa \norm{f(\cdot,a)}{\eta}<+\infty$ for some $\varkappa >0$;
 \item $\int_{\widetilde{A}} f \dd\widetilde{\mu} = 0$;
 \item $g \in \Lbb^1 (\widetilde{A},\widetilde{\mu})$,
\end{itemize}
where 
$\norm{\cdot}{\eta}$ is the maximal $\eta$-H\"older norm with respect to the
euclidean metric on $M$ on the continuity domains of $T$.
\smallskip

Then
\begin{equation*}
 \left(\frac{\widetilde{S}_n g}{\ln (n)},\frac{\widetilde{S}_n f}{\sqrt{\ln (n)}}\right)
 \longrightarrow \left(\frac{ \int_{\widetilde{A}} g\dd\widetilde{\mu} }{ 2\pi\sqrt{\det(\Sigma^2)} } \, \Ecal, \, \frac{\widetilde \sigma (f)}{\sqrt{2 \pi} (\det(\Sigma^2))^{\frac{1}{4}}} \sqrt{\Ecal}\, \Ncal\right)\, ,
\end{equation*}
in distribution with respect to any probability measure absolutely continuous 
with respect to $\widetilde{\mu}$, and where $\Ecal$ and $\Ncal$ are two independent random 
variables, with respectively standard exponential and standard Gaussian distributions, 
with $\tilde\sigma^2 (f)$ given by Equation~\eqref{AAA2} 
and with $\Sigma$ the invertible symmetric positive matrix such that
$\Sigma^2 = \sum_{k\in\Z}\Ebb\left[F\otimes(F\circ T^k)\right]$.
\end{corollary}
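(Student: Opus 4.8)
\textbf{Proof proposal for Corollary~\ref{cor:billtransfo}.}

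The plan is to deduce the corollary from Theorem~\ref{thm:gene} by choosing the right Banach space $\Bcal$ and verifying all its hypotheses for the billiard collision map. First I would fix the functional-analytic framework: take $\Bcal$ to be the anisotropic Banach space of distributions on $A$ constructed by Demers--Pène--Zhang for the Sinai billiard collision map, for which the transfer operator $P$ of $T$ has a spectral gap on the subspace of zero-average elements; this is exactly the setting of Hypothesis~\ref{hyp:HHH}. With that choice, Lemma~\ref{lem:0} gives the spectral local limit estimate~\eqref{hyp:0} with some $\eta>0$, where $\Phi$ is the density of the centered Gaussian with covariance $\Sigma^2 = \sum_{k\in\Z}\Ebb[F\otimes(F\circ T^k)]$; here the finite-horizon assumption is what guarantees $F\in L^2$ and that $\Sigma^2$ is non-degenerate (so $\Phi$ is a genuine two-dimensional Gaussian density), as well as that the collision map has the required hyperbolicity. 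We are in the case $p>1$ (indeed $\Bcal\hookrightarrow L^p(A,\mu)$ for some $p>1$ in the Demers--Pène--Zhang construction), with conjugate exponent $p^*<\infty$. Conservativity and ergodicity of $(\widetilde A,\widetilde\mu,\widetilde T)$ is the input from \cite{JPC,Simanyi:1989}, which we may cite directly.

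The second step is to translate the H\"older hypotheses on $f$ into the abstract hypotheses of Theorem~\ref{thm:gene}. The key point is that multiplication by an $\eta$-H\"older function (on the continuity domains of $T$) is a bounded operator on $\Bcal$, with operator norm controlled by the H\"older norm $\norm{\cdot}{\eta}$; this is a standard Lasota--Yorke-type product estimate available in the Demers--Pène--Zhang calculus. Granting that, the hypothesis $\sum_{a\in\Z^2}|a|^\varkappa\norm{f(\cdot,a)}{\eta}<\infty$ immediately yields $\sum_{a\in\Z^2}(1+|a|^\varkappa)\norm{f(\cdot,a)\times\cdot}{\Lcal(\Bcal,\Bcal)}<\infty$, which is the first bullet of Theorem~\ref{thm:gene}. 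For the second bullet, since $\Bcal\hookrightarrow L^p$ and multiplication by $f(\cdot,a)$ is bounded on $\Bcal$ with $\mathbf 1\in\Bcal$, we get $f(\cdot,a)=f(\cdot,a)\cdot\mathbf 1\in\Bcal\hookrightarrow L^p\subset L^{p^*}$ (one must check $p^*\le p$, i.e.\ $p\ge 2$; if the available $p$ is smaller than $2$ one argues instead that $f(\cdot,a)$ is bounded, being H\"older on finitely many pieces, hence in every $L^q$, with $\norm{f(\cdot,a)}{\infty}\lesssim\norm{f(\cdot,a)}{\eta}$), so $\sum_a\norm{f(\cdot,a)}{L^{p^*}}<\infty$ follows from $\sum_a\norm{f(\cdot,a)}{\eta}<\infty$. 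The remaining two bullets, $\int_{\widetilde A}f\,\de\widetilde\mu=0$ and $g\in L^1(\widetilde A,\widetilde\mu)$, are hypotheses carried over verbatim.

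The third step is bookkeeping of the constant $\Phi(0)$. For a centered two-dimensional Gaussian with covariance $\Sigma^2$ one has $\Phi(0)=\bigl(2\pi\sqrt{\det(\Sigma^2)}\bigr)^{-1}$, so $\Phi(0)\,\int_{\widetilde A}g\,\de\widetilde\mu = \int_{\widetilde A}g\,\de\widetilde\mu\,/\,\bigl(2\pi\sqrt{\det(\Sigma^2)}\bigr)$ and $\sqrt{\Phi(0)} = \bigl(2\pi\bigr)^{-1/2}(\det\Sigma^2)^{-1/4}$, which matches the stated limits once we substitute into~\eqref{eq:ConvergenceDistributionGene} and specialize to a single time $t=1$ (the corollary states the non-functional version). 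The absolute convergence of the series defining $\tilde\sigma^2(f)$ and the convergence in distribution in the strong sense are then exactly the conclusion of Theorem~\ref{thm:gene}.

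I expect the main obstacle to be the second step: namely, making rigorous that $\eta$-H\"older functions act boundedly on the anisotropic space $\Bcal$ and that the H\"older decay hypothesis $\sum_a|a|^\varkappa\norm{f(\cdot,a)}{\eta}<\infty$ really controls $\norm{f(\cdot,a)\times\cdot}{\Lcal(\Bcal,\Bcal)}$ uniformly. This requires care because the billiard map has singularities, the norm $\norm{\cdot}{\eta}$ is taken piecewise on the continuity domains, and the anisotropic norms involve integration of distributions against test functions along stable curves, so one must check the multiplier estimate respects that structure. The other steps — invoking Lemma~\ref{lem:0}, citing conservativity/ergodicity, and computing $\Phi(0)$ — are routine by comparison.
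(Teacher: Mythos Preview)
Your approach is the same as the paper's: apply Theorem~\ref{thm:gene} after verifying Hypothesis~\ref{hyp:HHH} via the Demers--P\`ene--Zhang machinery and Lemma~\ref{lem:0}, and use the multiplier estimate $\norm{f(\cdot,a)\times\cdot}{\Lcal(\Bcal,\Bcal)}\leq C\norm{f(\cdot,a)}{\eta}$ from \cite{DemersZhang:2014} or \cite{DemersPeneZhang}. The bookkeeping for $\Phi(0)$ is also correct.

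There is one factual slip: the Demers--P\`ene--Zhang anisotropic space $\Bcal$ is a space of \emph{distributions}, and one does \emph{not} have $\Bcal\hookrightarrow L^p(A,\mu)$ for any $p>1$; the paper invokes Hypothesis~\ref{hyp:HHH} with $p=1$ (the first alternative in Theorem~\ref{thm:gene}, where $\Ebb_\mu[\cdot]$ extends continuously to $\Bcal$). With $p=1$ one has $p^*=\infty$, so the second bullet of Theorem~\ref{thm:gene} becomes $\sum_a\norm{f(\cdot,a)}{\infty}<\infty$, which follows immediately from $\norm{f(\cdot,a)}{\infty}\leq\norm{f(\cdot,a)}{\eta}$ --- exactly the fallback you wrote down. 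So your argument goes through once you set $p=1$ from the start rather than $p>1$.
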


\begin{proof}
This corollary is a direct consequence of Theorem~\ref{thm:gene}
and Lemma~\ref{lem:0} thanks to \cite[Theorem 3.17]{DemersPeneZhang} 
(ensuring Hypothesis \ref{hyp:HHH} with $p=1$) and to~\cite[Lemma 5.3]{DemersZhang:2014} 
or \cite[Lemma 4.5]{DemersPeneZhang} (ensuring that
$\norm{f(\cdot,a)\times}{\Lcal(\Bcal,\Bcal)}\leq C\norm{f(\cdot,a)}{\eta}$).
\end{proof}

While the results above are proved using the method of moments, we shall also 
show how to prove similar propositions using induced systems. The strategy follows closely 
that of~\cite[Proposition~6.12]{Thomine:2015}, which was in the setting 
of geodesic flows in negative curvature, with some improvements from~\cite{PeneThomine:2019}. 
The main difference in the present article is that, in the context of Sinai billiards, we 
use Young tower in order to introduce a symbolic coding of the trajectories.

\begin{proposition}
\label{prop:BillardParInduit}

Let $(\widetilde{A},\widetilde{\mu},\widetilde T)$ be the $\Z^2$-periodic billiard system
presented in Subsection~\ref{subsec:bill}.
Let $f:\widetilde{A} \to \R$ be such that:
\begin{itemize}
 \item $f$ is $\eta$-H\"older on the continuity domains of $\widetilde T$, with a uniformly bounded $\eta$-H\"older norm, 
 for some $\eta >0$;
 \item $\sum_{a\in\Z^2}(1+\ln_+ |a|)^{\frac{1}{2}+\varkappa} \norm{f(\cdot,a)}{\infty}<+\infty$ for some $\varkappa >0$;
 \item $\int_{\widetilde{A}} f \dd\widetilde{\mu} = 0$.
\end{itemize}

\smallskip

Then
\begin{equation*}
 \frac{\widetilde{S}_n f}{\sqrt{\ln (n)}} 
 \longrightarrow \frac{\widehat{\sigma} (f)}{\sqrt{2\pi}(\det(\Sigma^2))^{\frac{1}{4}}} L \, ,
\end{equation*}
in distribution with respect to any probability measure absolutely continuous 
with respect to $\widetilde{\mu}$, where $L$ follows a centered Laplace distribution 
of variance $1$, with $\widehat{\sigma} (f)$ given by Equation~\eqref{eq:VarianceInduit} 
and with $\Sigma$ the invertible symmetric positive matrix such that
$\Sigma^2=\sum_{k\in\Z} \Ebb\left[F\otimes(F\circ T^k)\right]$.

\smallskip

In addition, $\widehat{\sigma} (f) = 0$ if and only if $f$ is a coboundary.
\end{proposition}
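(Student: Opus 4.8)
The plan is to realize $\widetilde{S}_nf$ through an induced system on a well-chosen subset, following the strategy of \cite[Proposition~6.12]{Thomine:2015}. First I would fix a reference cell, say $A\times\{0\}$, and induce the $\Z^2$-extension $\widetilde T$ on the "return to the cell" system. The key point is that the induced map on $A\times\{0\}$ is conjugate to a \emph{probability} preserving transformation, and the induced observable is the sum of $f$ along the excursion away from the cell. By the classical theory relating Birkhoff sums of $\widetilde T$ to those of the induced system (Aaronson's theory, or the arguments in \cite{Thomine:2013,Thomine:2014,Thomine:2015,PeneThomine:2019}), the asymptotics of $\widetilde{S}_nf/\sqrt{\ln n}$ are governed by the partial sums of the induced observable evaluated at the (random) number of returns to the cell up to time $n$. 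Because the return time to a cell for the planar Lorentz gas has a tail of order $1/(n\ln n)$ — this is exactly the mechanism behind \eqref{LFGNbill} and the Dolgopyat--Sz\'asz--Varj\'u result — the number of returns up to time $n$ is of order $\ln n$, with the $\ln n$-normalized count converging to $\Phi(0)\Ecal$. Combining a CLT for the induced partial sums (normalized by the square root of the number of returns) with this independent exponential time change produces the Laplace limit $L=\sqrt{\Ecal}\,\Ncal$ after derandomizing.

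The main technical work, and the place where Young towers enter, is establishing the CLT for the induced observable. Here I would use the symbolic coding of billiard trajectories via a Young tower over the Sinai billiard map $T$: lift $f$ and the step function $F$ to the tower, and use the exponential tails of the return times together with the Hölder regularity of $f$ to control the correlations. The decay hypothesis $\sum_{a}(1+\ln_+|a|)^{1/2+\varkappa}\norm{f(\cdot,a)}{\infty}<\infty$ is precisely what is needed to ensure that the induced observable is in $L^2$ of the induced probability measure (the excursion away from a cell wanders a distance $|a|$ in roughly $|a|^2$ steps of a random walk, and the recurrence/overshoot estimates bring in the $\ln_+|a|$ weight), and that the Green--Kubo series defining $\widehat\sigma(f)$ in \eqref{eq:VarianceInduit} converges. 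The CLT itself then follows from a standard martingale approximation or Gordin-type argument on the Young tower, or by invoking an off-the-shelf CLT for Young towers applied to the induced system. The hardest estimate, I expect, is the uniform control of the induced observable's tail and of the correlations $\int f\cdot f\circ\widetilde T^k$ restricted to long excursions: one must track simultaneously the spatial spreading of the random walk $S_nF$ and the loss of regularity of $f$ under iteration, and show the net contribution is summable. This is where the logarithmic weight in the hypothesis is sharp.

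Finally, for the non-degeneracy statement, I would argue as usual that $\widehat\sigma(f)^2$, being the asymptotic variance of the induced observable, vanishes if and only if the induced observable is an $L^2$-coboundary for the induced map, and then transfer this back: a coboundary relation upstairs for $\widetilde T$ with transfer function in the appropriate space is equivalent to the coboundary relation downstairs for the induced system, by the standard correspondence between (co)boundaries of a system and of its induced systems (see \cite{Aaronson:1997}). One direction is immediate — if $f=u-u\circ\widetilde T$ then $\widetilde{S}_nf$ is telescoping and bounded in a suitable sense, forcing $\widehat\sigma(f)=0$; the converse requires checking that the coboundary produced at the level of the induced system, \emph{a priori} only measurable, can be promoted to an honest coboundary for $\widetilde T$, which is a routine (if slightly delicate) regularity bootstrap using the mixing of the billiard map.
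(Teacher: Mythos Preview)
Your strategy is the paper's strategy, and you have the architecture right: induce on the zero cell, lift to a Young tower, apply the induced-CLT machinery of \cite{Thomine:2015,PeneThomine:2019}, and read off the Laplace law from the $\ln n$ scaling of the local time. Two substantive points, however, are not as you describe them and carry most of the work.

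First, $\Lbb^2$ is not enough. The induced-CLT you need (Proposition~\ref{prop:2015}, from \cite{Thomine:2015}) requires the excursion maximum $\sup_{0\le n\le\widetilde\varphi}\bigl|\sum_{k<n}f_+\circ\widetilde T_\tau^k\bigr|$ to lie in $\Lbb^q$ for some $q>2$, not merely $\Lbb^2$. The exponent $\tfrac12+\varkappa$ in the hypothesis is calibrated exactly for this: the paper shows, via the potential-kernel estimates of \cite{PeneThomine:2019} (specifically $\mu(A_a)=\Theta\bigl((1+\ln_+|a|)^{-1}\bigr)$ and uniform boundedness of the hitting densities $\de\mu_a/\de\mu$), that
\[
\norm{G_\varphi(\overline f)}{\Lbb^q}\;\le\;C\sum_{a\in\Z^2}(1+\ln_+|a|)^{1-\frac1q}\,\norm{G_\tau(\overline f)(\cdot,a)}{\Lbb^q},
\]
followed by a convolution estimate against the tail of $\tau$. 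Setting $q=2/(1-2\varkappa)$ makes $1-\tfrac1q=\tfrac12+\varkappa$, which is where the hypothesis comes from. Your heuristic about excursions wandering to distance $|a|$ in $|a|^2$ steps points at the right mechanism, but the actual argument is not a correlation bound on $\int f\cdot f\circ\widetilde T^k$; it is this $\Lbb^q$ tail estimate on the induced observable, and it is the hardest step.

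Second, before any of this one must reduce from the two-sided (hyperbolic) Young tower to a one-sided Gibbs--Markov base, because the induced-CLT is stated for Gibbs--Markov maps. The paper does this with Bowen's coboundary trick: add $u\circ\widetilde T_{Y,\tau}-u$, with $u$ a bounded function summing the future differences along stable leaves, so that $f_+:=f\circ\widetilde\pi+u\circ\widetilde T_{Y,\tau}-u$ depends only on the unstable coordinate and descends through $\widetilde\pi_Y$. One then has to check that $f_+$ still satisfies the Lipschitz-on-cylinders regularity required by Proposition~\ref{prop:2015}, which is a separate (routine but necessary) verification. Your sketch does not mention this passage, and without it there is no Gibbs--Markov structure to invoke.
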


Comparing the conclusions of Corollary~\ref{cor:billtransfo} and Proposition~\ref{prop:BillardParInduit}, 
one has $\widetilde{\sigma} (f) = \widehat{\sigma} (f)$ whenever $f$ satisfies the assumptions of 
Corollary~\ref{cor:billtransfo}. This equality has a deep dynamical consequences~\cite{PeneThomine:2019}.

\smallskip

The assumptions of Proposition~\ref{prop:BillardParInduit} are slightly weaker than those 
of Corollary~\ref{cor:billtransfo}. The conclusions of the former are also weaker, dealing with the 
limit distribution of $S_n f$ and not the limit joint distribution of $(S_n g, S_n f)$. 
The stronger result should hold under the assumptions of Proposition~\ref{prop:BillardParInduit}, 
but one would start from~\cite[Theorem~1.7]{Thomine:2013}, which is beyond the scope of this article.
On the other hand, it should also be possible to weaken the assumptions of Corollary~\ref{cor:billtransfo} (dynamically H\"older observables satisfying a weaker decay condition expressed only in terms of supremum norm), with a less direct and more technical proof (using approximations).

\subsection{Results for flows}

Theorem~\ref{thm:gene} admits a version for semiflows:

\begin{theorem}
\label{thm:geneflot}

Assume that $(\widetilde{A}, \widetilde{\mu}, \widetilde{T})$ is conservative and ergodic.
Let $\eta>0$ and $p$, $p^* \in [1, \infty]$ such that $p^{-1}+(p^*)^{-1}=1$.
Let $(\Bcal,\norm{\cdot}{\Bcal})$ be a Banach space satisfying the assumptions of Theorem~\ref{thm:gene}.

\smallskip

Let $\phi$, $\psi: \widetilde{\Mcal} \to \R$ be such that:
\begin{itemize}
 \item $\sum_{a\in\Z^2} (1+|a|^{\varkappa}) \norm{ G(\phi)(\cdot,a)\times\cdot}{\Lcal(\Bcal,\Bcal)} <+ \infty$ for some $\varkappa >0$;
 \item $\sum_{a\in\Z^2} \norm{G(|\phi|)(\cdot,a)}{\Lbb^{p^*} (A, \mu)} < +\infty$;
 \item $\int_{\widetilde{M}} \phi\dd\widetilde{\mu} = 0$;
 \item $\psi \in \Lbb^1 (\widetilde{\Mcal}, \widetilde{\nu})$.
\end{itemize}
Then, for every $0 < s_1 < s_2$, as $t \to \infty$, 
\begin{equation}\label{eq:ConvergenceDistributionGeneFlot}
\left(\frac{\widetilde{S}_{ts} \psi}{\ln (t)},\frac{\widetilde{S}_{ts} \phi}{\sqrt{\ln (t)}}\right)_{s\in[s_1,s_2]}
\longrightarrow \left(\int_{\widetilde{\Mcal}}\psi\dd\widetilde{\nu}\, \Phi(0)\Ecal, \, \widetilde{\sigma} (G(\phi)) \sqrt{\Phi(0)\Ecal}\, \Ncal\right)_{s\in[s_1,s_2]}\, ,
\end{equation}
in distribution in $\Ccal ([s_1,s_2], \R)$ with respect to any probability measure 
absolutely continuous with respect to $\widetilde{\nu}$. In Equation~\eqref{eq:ConvergenceDistributionGeneFlot},
$\Ecal$ and $\Ncal$ are two independent random variables with respectively 
standard exponential and standard Gaussian distributions.
\end{theorem}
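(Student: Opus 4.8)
The plan is to deduce Theorem~\ref{thm:geneflot} from Theorem~\ref{thm:gene} applied on the base, to the induced observables $f:=G(\phi)$ and $g:=G(\psi)$, where $G(h)(x,a):=\int_0^{\tau(x)}h(x,a,u)\dd u$ is the integral of $h$ along the flow over one fibre. First I would check that $(f,g)$ satisfies the four hypotheses of Theorem~\ref{thm:gene}. The operator‑norm and $\Lbb^{p^*}$ summability conditions on $f$ are immediate from those imposed here on $G(\phi)$ and $G(|\phi|)$, using $\norm{G(\phi)(\cdot,a)}{\Lbb^{p^*}(A,\mu)}\le\norm{G(|\phi|)(\cdot,a)}{\Lbb^{p^*}(A,\mu)}$. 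By Tonelli's theorem applied to $\widetilde\nu=(\widetilde\mu\otimes\Leb)_{|\widetilde\Mcal}$ one has $\int_{\widetilde A}|G(\psi)|\dd\widetilde\mu\le\int_{\widetilde\Mcal}|\psi|\dd\widetilde\nu<\infty$, whence $g\in\Lbb^1(\widetilde A,\widetilde\mu)$, and likewise $\int_{\widetilde A}G(\phi)\dd\widetilde\mu=\int_{\widetilde\Mcal}\phi\dd\widetilde\nu=0$ and $\int_{\widetilde A}G(\psi)\dd\widetilde\mu=\int_{\widetilde\Mcal}\psi\dd\widetilde\nu$. Theorem~\ref{thm:gene} then yields, for any fixed $0<T_1<T_2$, the convergence in $\Ccal([T_1,T_2])$ — with respect to any probability absolutely continuous w.r.t.\ $\widetilde\mu$ — of
\[
\left(\frac{\widetilde S_{Nu}G(\psi)}{\ln N},\ \frac{\widetilde S_{Nu}G(\phi)}{\sqrt{\ln N}}\right)_{u\in[T_1,T_2]}
\longrightarrow
\left(\int_{\widetilde\Mcal}\psi\dd\widetilde\nu\,\Phi(0)\Ecal,\ \widetilde\sigma(G(\phi))\sqrt{\Phi(0)\Ecal}\,\Ncal\right)_{u\in[T_1,T_2]},
\]
the limit being \emph{constant in $u$}.

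Next I would compare the flow integral with an induced Birkhoff sum. With $n_v(x)=\max\{n\ge0:S_n\tau(x)\le v\}$ as in Subsection~\ref{subsec:gene}, splitting the orbit of a point $(x,a,s_0)\in\widetilde\Mcal$ at its successive collision times gives, for $\theta$ large and $m=m_\theta(x):=n_{\theta+s_0}(x)$,
\[
\widetilde S_\theta\phi(x,a,s_0)=\widetilde S_{m}G(\phi)(x,a)+E^\phi_\theta,\qquad |E^\phi_\theta|\le G(|\phi|)(x,a)+G(|\phi|)\!\left(\widetilde T^{m}(x,a)\right),
\]
where $E^\phi_\theta$ collects the incomplete initial and terminal pieces of the orbit; the same holds with $\psi$. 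Since $\tau$ depends only on $x$, $S_n\tau(x)=\sum_{k<n}\tau(T^kx)$ is a Birkhoff sum over the ergodic probability system $(A,\mu,T)$, and $\bar\tau:=\int_A\tau\dd\mu\in(0,\infty)$ by finite horizon; hence $S_n\tau/n\to\bar\tau$, so $n_v/v\to1/\bar\tau$ and $\ln n_v=\ln v+O(1)$ as $v\to\infty$, $\widetilde\mu$‑a.e.

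Set $N=N(t):=t/\bar\tau$ and $u_t(s):=n_{ts}(x)/N=\bar\tau\,n_{ts}(x)/t$, so that $u_t(s)\to s$, $\widetilde\mu$‑a.e., locally uniformly in $s$ (Dini). Fix $[T_1,T_2]\supset[s_1/2,2s_2]$; for large $t$ the process $s\mapsto u_t(s)$, $s\in[s_1,s_2]$, takes values in $[T_1,T_2]$, and
\[
\frac{\widetilde S_{ts}\phi}{\sqrt{\ln t}}=\frac{\widetilde S_{Nu_t(s)}G(\phi)}{\sqrt{\ln N}}\cdot\sqrt{\frac{\ln N}{\ln t}}+\frac{E^\phi_{ts}}{\sqrt{\ln t}},\qquad
\frac{\widetilde S_{ts}\psi}{\ln t}=\frac{\widetilde S_{Nu_t(s)}G(\psi)}{\ln N}\cdot\frac{\ln N}{\ln t}+\frac{E^\psi_{ts}}{\ln t}.
\]
Here $\ln N/\ln t\to1$; the leading factors are the Step~1 processes evaluated along the random reparametrisation $u_t$, which converges to the identity. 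Joint convergence (Slutsky, the reparametrisation limit being deterministic) and the continuous mapping theorem for $(\xi,v)\mapsto\xi\circ v$ show that these composed processes converge, as processes in $s\in[s_1,s_2]$, to the Step~1 limit — and, that limit being independent of $u$, to a process constant in $s$, namely the asserted one. Since $G(|\phi|)(x,a)$ is $\widetilde\mu$‑a.e.\ finite, hence negligible after division by $\sqrt{\ln t}$, it only remains to prove
\[
\max_{s\in[s_1,s_2]}G(|\phi|)\circ\widetilde T^{n_{ts}(x)}(x,a)=o\!\left(\sqrt{\ln t}\right),\qquad \max_{s\in[s_1,s_2]}G(|\psi|)\circ\widetilde T^{n_{ts}(x)}(x,a)=o(\ln t),
\]
in distribution w.r.t.\ any absolutely continuous probability.

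This last step is the main obstacle: it concerns the maximum of $\widetilde T$‑iterates of an $\Lbb^1(\widetilde\mu)$ function over a window of $\asymp t$ indices, which has to be much smaller than $\sqrt{\ln t}$. It suffices (by a density argument, enough for strong distributional convergence) to treat probabilities $\P$ with bounded density $\rho$. With $\widehat L$ the transfer operator of $\widetilde T$ and $C>0$ such that $n_{ts}(x)\le Ct$ for large $t$ and all $s\le s_2$,
\[
\P\!\left(\max_{0\le m\le Ct}G(|\phi|)\circ\widetilde T^m>\varepsilon\sqrt{\ln t}\right)\le\int_{\widetilde A}\mathbf 1_{\{G(|\phi|)>\varepsilon\sqrt{\ln t}\}}\sum_{m=0}^{\lfloor Ct\rfloor}\widehat L^m\rho\dd\widetilde\mu.
\]
The partial sums $\sum_{m\le Ct}\widehat L^m\rho$ grow like $\ln t$ on sets of finite $\widetilde\mu$‑measure — this is the very mechanism producing the $\ln t$ normalisation, and it follows from the spectral picture behind Assumption~\eqref{hyp:0} — so Hölder's inequality together with $\sum_a\norm{G(|\phi|)(\cdot,a)}{\Lbb^{p^*}(A,\mu)}<\infty$ bounds the right‑hand side by $O(\ln t)\cdot o(1/\ln t)=o(1)$ once $\varepsilon\sqrt{\ln t}\to\infty$. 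The $\psi$‑estimate is cruder, only $o(\ln t)$ being needed, and is handled the same way from $G(|\psi|)\in\Lbb^1(\widetilde\mu)$. Feeding these into the previous paragraph via Slutsky's lemma gives the joint convergence in $\Ccal([s_1,s_2],\R)$.
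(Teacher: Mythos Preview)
Your overall strategy is the paper's: apply Theorem~\ref{thm:gene} to $f=G(\phi)$ and $g=G(\psi)$ on the base, relate $\widetilde S_{ts}\phi$ to $\widetilde S_{m}G(\phi)$ via the random time change $m\sim ts/\bar\tau$, and kill the boundary pieces. The verification that $(f,g)$ meets the hypotheses of Theorem~\ref{thm:gene} and the composition with the time change are essentially correct (modulo the minor point that $u_t(s)=\bar\tau\,n_{ts}/t$ is piecewise constant, not continuous in $s$; the paper uses the continuous interpolation $N_t$ of $n_t$ to stay inside $\Ccal([s_1,s_2])$).

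The genuine gap is in your control of the error terms. For $\phi$, your union bound over $0\le m\le Ct$ combined with ``$\sum_{m\le Ct}\widehat L^m\rho=O(\ln t)$'' and Markov/H\"older gives at best
\[
\P\Bigl(\max_{0\le m\le Ct}G(|\phi|)\circ\widetilde T^m>\varepsilon\sqrt{\ln t}\Bigr)
\ \le\ \frac{\sum_b\|G(|\phi|)(\cdot,b)\|_{\Lbb^{p^*}}}{\varepsilon\sqrt{\ln t}}\cdot O(\ln t)\ =\ O(\sqrt{\ln t}),
\]
which is not $o(1)$; the factor you call ``$o(1/\ln t)$'' is in fact only $O(1/\sqrt{\ln t})$. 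The fix is to restrict to $m\in[c_1t,c_2t]$ (legitimate since $n_{ts+s_0}\sim ts/\bar\tau$ almost surely), for which $\sum_{m=c_1t}^{c_2t}Q_{m,b}(\mathbf 1)=O(1)$ uniformly in $b$ by~\eqref{hyp:0}; this is precisely Lemma~\ref{controleMoments0}, and it shows that $\sup_{c_1t\le m\le c_2t}G(|\phi|)\circ\widetilde T^m$ is bounded in $\Lbb^1(\widetilde T_*(\mu\otimes\delta_0))$, hence $o_\P(\sqrt{\ln t})$.

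For $\psi$ the situation is worse: you assert the same argument works ``from $G(|\psi|)\in\Lbb^1(\widetilde\mu)$'', but the bound above requires $\sum_b\|G(|\psi|)(\cdot,b)\|_{\Lbb^{p^*}}<\infty$, which is \emph{not} assumed for $\psi$. The paper sidesteps this entirely: it first proves the statement for the single observable $\psi(x,a,u)=\tau(x)^{-1}\mathbf 1_0(a)$, for which $G(|\psi|)=\mathbf 1_0$ is bounded and the error is trivially $O(1)$, and only afterwards extends to arbitrary $\psi\in\Lbb^1(\widetilde\Mcal,\widetilde\nu)$ via Hopf's ratio ergodic theorem (exactly as at the end of the proof of Theorem~\ref{thm:gene}). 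You should do the same rather than try to control $\max_m G(|\psi|)\circ\widetilde T^m$ directly.
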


As Theorem~\ref{thm:gene} was applied to the collision map for Sinai billiards, 
so does Theorem~\ref{thm:geneflot} to the flow on Sinai billiards (i.e.\ to the two-dimensional Lorentz gas model). 
In order for the Lorentz gas to fit our setting, we see it as a suspension flow 
of height $\widetilde{\tau}$ over its collision map $(\widetilde{A},\widetilde{\mu},\widetilde{T})$, which 
was the object of Corollary~\ref{cor:billtransfo}. 


\begin{corollary}\label{coroLorentz}

Let $(\widetilde{\Mcal},\widetilde{\nu},(\widetilde{Y}_t)_t)$ be the $\Z^2$-periodic Lorentz gaz described above.
Let $\eta$, $\varkappa>0$, and denote by $\norm{\cdot}{\eta}$ the $\eta$-H\"older norm on $\widetilde{\Mcal}$.

\smallskip

Let $\phi$, $\psi: \widetilde{\Mcal} \to \R$ be such that:
\begin{itemize}
 \item $\sum_{a=(a_1,a_2)\in\Z^2}|a|^\varkappa \norm{\phi_{|[a_1,a_1+1]\times[a_2,a_2+1]}}{\eta} <+\infty$;
 \item $\int_{\widetilde{\Mcal}}\phi\dd\widetilde{\nu}=0$;
 \item $\psi \in \Lbb^1 (\widetilde{\Mcal}, \widetilde{\nu})$;
\end{itemize}
Then, for every $0<s_1<s_2$, as $t \to +\infty$,
\begin{equation}
 \left(\frac{\widetilde{S}_{ts} \psi}{\ln (t)},\frac{\widetilde{S}_{ts} \phi}{\sqrt{\ln (t)}}\right)_{s\in[S_1,S_2]}
 \longrightarrow \left(\frac{\int_{\widetilde{\Mcal}}\psi\dd\widetilde{\nu}}{2\pi\sqrt{\det(\Sigma^2)}} \Ecal, \, \frac{\widetilde{\sigma} (G(\phi))}{\sqrt{2 \pi} (\det(\Sigma^2))^{\frac{1}{4}}} \sqrt{\Ecal}\, \Ncal\right)_{s\in[s_1,s_2]}\, ,
\end{equation}
with 
\begin{equation*}
 \widetilde{\sigma} (G(\phi))^2
 := \sum_{k\in\Z} \int_{\widetilde{A}} G(\phi) \cdot G(\phi) \circ \widetilde{T}^k \dd \widetilde{\mu}\, .
\end{equation*}
\end{corollary}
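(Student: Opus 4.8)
The plan is to deduce Corollary~\ref{coroLorentz} from Theorem~\ref{thm:geneflot} applied to the suspension representation of the Lorentz gas given in Subsection~\ref{subsec:bill}, namely $(\widetilde{\Mcal},\widetilde{\nu},(\widetilde{Y}_t)_t)$ realised as the suspension flow over the $\Z^2$-periodic billiard map $(\widetilde{A},\widetilde{\mu},\widetilde{T})$ with roof $\widetilde{\tau}$. Since the collision map is a $\Z^2$-extension of the Sinai billiard map and the latter satisfies Hypothesis~\ref{hyp:HHH} with $p=1$ (via \cite[Theorem~3.17]{DemersPeneZhang}), Lemma~\ref{lem:0} provides Assumption~\eqref{hyp:0} with $\Phi$ the centered Gaussian density of covariance $\Sigma^2$; in particular $\Phi(0)=(2\pi\sqrt{\det(\Sigma^2)})^{-1}$, which accounts for the constants in the statement. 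Conservativity and ergodicity of $(\widetilde{A},\widetilde{\mu},\widetilde{T})$ come from \cite{JPC,Simanyi:1989}. Thus the abstract hypotheses of Theorem~\ref{thm:geneflot} with $\Bcal$ the anisotropic Banach space of \cite{DemersZhang:2014,DemersPeneZhang} are in force, and it remains only to translate the decay assumptions on $\phi$ into the required bounds on $G(\phi)$.

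The first real step is therefore to recall (or make explicit) the functional $G$: for a function $\phi$ on $\widetilde{\Mcal}$, $G(\phi)(x,a)=\int_0^{\tau(x)}\phi(\widetilde{Y}_s(x,a,0))\,\de s$ is the integral of $\phi$ along a free-flight segment, so that $\widetilde{S}_t\phi$ for the flow relates to $\widetilde{S}_n(G(\phi))$ for the map up to boundary corrections controlled by $\tau\in L^1$. One then needs to check the four bullet points of Theorem~\ref{thm:geneflot} for $G(\phi)$ and $\psi$. The integral condition $\int_{\widetilde{\Mcal}}\phi\dd\widetilde{\nu}=0$ immediately gives $\int_{\widetilde{A}}G(\phi)\dd\widetilde{\mu}=0$ by Fubini and the suspension formula, and $\psi\in L^1(\widetilde{\Mcal},\widetilde{\nu})$ is hypothesised directly. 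For the two summability conditions, the point is that a free-flight segment starting in cell $a$ stays within $a$ and its finitely many neighbours (finite horizon gives a uniform bound $\tau\le\tau_{\max}$, hence a bounded displacement per collision), so $G(\phi)(\cdot,a)$ is controlled by the $\eta$-H\"older norms of $\phi$ on the $O(1)$ cells near $a$; summing, $\sum_a|a|^\varkappa\norm{G(\phi)(\cdot,a)}{\eta}\lesssim\sum_a|a|^\varkappa\norm{\phi_{|[a_1,a_1+1]\times[a_2,a_2+1]}}{\eta}<\infty$. The same argument with the $L^{p^*}=L^\infty$ norm handles $\sum_a\norm{G(|\phi|)(\cdot,a)}{L^\infty}<\infty$ (here $p=1$, $p^*=\infty$), and the bound $\norm{G(\phi)(\cdot,a)\times\cdot}{\Lcal(\Bcal,\Bcal)}\le C\norm{G(\phi)(\cdot,a)}{\eta}$ from \cite[Lemma~5.3]{DemersZhang:2014} (or \cite[Lemma~4.5]{DemersPeneZhang}) converts this into the multiplier-norm condition. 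One must also verify that $G(\phi)$ is itself piecewise H\"older on the continuity domains of $\widetilde{T}$: the discontinuities of $G(\phi)$ come from tangential collisions and from the discontinuities of $\tau$, which are exactly (a refinement of) those of $\widetilde{T}$, and the map $x\mapsto\int_0^{\tau(x)}\phi(\widetilde{Y}_s(x,\cdot))\,\de s$ is H\"older on each such cell with $\tau$ and the flow being smooth there.

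Once all hypotheses of Theorem~\ref{thm:geneflot} are verified, its conclusion~\eqref{eq:ConvergenceDistributionGeneFlot} gives precisely the stated joint functional convergence, with $\Phi(0)=(2\pi\sqrt{\det(\Sigma^2)})^{-1}$ substituted and the variance $\widetilde{\sigma}(G(\phi))^2=\sum_{k\in\Z}\int_{\widetilde{A}}G(\phi)\cdot G(\phi)\circ\widetilde{T}^k\dd\widetilde{\mu}$ (absolute convergence being part of the conclusion of Theorem~\ref{thm:gene}/\ref{thm:geneflot}). I expect the main obstacle to be the bookkeeping around the functional $G$: carefully justifying that $G(\phi)$ inherits piecewise $\eta$-H\"older regularity with respect to the (possibly refined) continuity partition of $\widetilde{T}$ and that the free-flight integral is genuinely supported, up to bounded displacement, in finitely many lattice cells, so that the weighted sums transfer with only a loss of a multiplicative constant. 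This is the one place where the finite-horizon hypothesis is essential and where the geometry of the billiard, rather than soft functional analysis, enters; everything else is a direct invocation of Theorem~\ref{thm:geneflot}, Lemma~\ref{lem:0}, and the cited spectral estimates for the billiard transfer operator.
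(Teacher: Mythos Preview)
Your proposal is correct and follows essentially the same route as the paper: represent the Lorentz gas as a suspension flow, invoke Hypothesis~\ref{hyp:HHH} for the Sinai billiard via \cite{DemersPeneZhang}, use finite horizon to localise $G(\phi)(\cdot,a)$ to finitely many cells and hence transfer the weighted-sum decay, check H\"older regularity of $G(\phi)$ on continuity components, convert to the multiplier bound via \cite{DemersZhang:2014}, and apply Theorem~\ref{thm:geneflot}. The only refinement the paper makes explicit that you leave implicit is that $\tau$ is merely $\tfrac12$-H\"older on continuity components (not Lipschitz), so the H\"older exponent of $G(\phi)$ must be downgraded to $\min\{\eta,\tfrac12\}$; this is harmless but worth recording when you carry out the estimate.
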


\subsection{Spectral hypotheses}
\label{subsec:technical}

All the results above hold whenever Assumption~\eqref{hyp:0} is satisfied. To 
finish this introduction, we now relate this assumption to more classical spectral conditions 
on the transfert operator associated with $(A, \mu, T)$.

\smallskip

The transfer operator $P$ is defined, for $f \in \Lbb^1 (A, \mu)$, by:
\begin{equation*}
 \int_A P(f) \cdot g \dd \mu 
 = \int_A f \cdot g \circ T \dd \mu 
 \quad \forall g \in \Lbb^\infty (A, \mu)\, . 
\end{equation*}
Recall that $F : A \to \Z^2$. Let  $\Tbb^2 := \R^2/(2\pi\Z)^2$. 
We define a family of twisted transfer operators $(P_u)_{u \in \Tbb^2}$ by:
\begin{equation*}
 P_u(h) 
 := P (e^{i \langle u, F \rangle} h)
\end{equation*}
for all $h \in \Lbb^1 (A, \mu)$. Note that:
\begin{equation}\label{Puk}
P_u^k (h)
= P^k \left(e^{i \langle u, S_k F \rangle} h \right)\, .
\end{equation}
The idea to study the spectral properties of $P_u$ to establish limit theorems goes back 
to the seminal works by Nagaev~\cite{Nagaev:1957, Nagaev:1961} and Guivarc'h~\cite{GuivarchHardy:1988} 
and has been deeply generalized by Keller and Liverani in~\cite{KellerLiverani:1999}. 
We refer to the book by Hennion and Herv\'e~\cite{HennionHerve:2001} for an overview of the important results 
that can be proved by this method.

\smallskip

The more usual spectral conditions are: 

\begin{hypothesis}[Spectral hypotheses]
\label{hyp:HHH}

There exists a complex Banach space $(\Bcal,\norm{\cdot}{\Bcal})$ of functions 
or of distributions defined on $A$, on which $P$ acts continuously, and such that:
\begin{itemize}
\item $\mathbf{1} \in \Bcal$ and $\Ebb_\mu[\cdot]$ extends continuously from $\Bcal\cap L^1(A,\mu)$ to $\Bcal$;
\item for every $a\in\Z^2$, the multiplication by $f(\cdot,a)$ belongs to $\Lcal(\Bcal,\Bcal)$;
\item There exist an open ball $U \subset \Tbb^2$ containing $0$, two constants $C>0$ and $r \in(0,1)$, 
continuous functions $\lambda_\cdot:U \to \C$ and $\Pi_\cdot$, $R_\cdot:U \to \Lcal(\Bcal,\Bcal)$ 
such that
\begin{equation}\label{eq:decomp}
P_u^n 
=\lambda_u^n\Pi_u+R_u^n\,
\end{equation}
with:
\begin{align}
\norm{\Pi_u - \Ebb_\mu [\cdot]}{\Lcal(\Bcal,\Bcal)} & \leq C |u|  \quad \forall u\in U, \label{eq:Pi0} \\
\sup_{u\in U} \norm{R_u^k}{\Lcal(\Bcal,\Bcal)} +\sup_{u\in \Tbb^2 \setminus U} \norm{P_u^k}{\Lcal(\Bcal,\Bcal)} & \leq C r^k,				\label{eq:majoRu}
\end{align}
\item there exists an invertible positive symmetric matrix $\Sigma$ and $\varepsilon >0$ such that, as $u \to 0$,
\begin{equation}\label{eq:lambdau}
 \lambda_u
 = e^{-\frac{\langle\Sigma^2 u,u\rangle}{2}} + O\left(|u|^{2+\varepsilon} \right)\, .
\end{equation}
\end{itemize}
\end{hypothesis}

\begin{lemma}
\label{lem:0}

Assume that the Hypotheses~\ref{hyp:HHH} hold. Let $\Phi(x)=\frac{e^{-\frac{\langle \Sigma^{-2}x,x \rangle}{2}}}{2\pi\sqrt{\det(\Sigma^2)}}$ 
and $\eta \in (0,\varepsilon/2]$. Then Equation~\eqref{hyp:0} holds:
\begin{equation*}
\sup_{\substack{a\in\Z^2,\ h\in\Bcal\,  \\ \norm{h}{\Bcal} \leq 1}} \norm{P^\ell \left(\mathbf{1}_{\{S_\ell F=a\}}\, h\right) -\frac{\Phi\left(\frac{a}{\sqrt{\ell}}\right)}{\ell}\Ebb_\mu[h] }{\Bcal} 
= O(\ell^{-1-\eta})\, .
\end{equation*}
\end{lemma}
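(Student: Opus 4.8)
The plan is to derive the spectral local limit estimate \eqref{hyp:0} by Fourier inversion on $\Tbb^2$, starting from the decomposition \eqref{eq:decomp} of the twisted transfer operators. Using \eqref{Puk} and the Fourier inversion formula on $\Z^2$, one writes, for $h\in\Bcal$ and $a\in\Z^2$,
\begin{equation*}
 P^\ell\left(\mathbf 1_{\{S_\ell F=a\}}\,h\right)
 =\frac{1}{(2\pi)^2}\int_{\Tbb^2} e^{-i\langle u,a\rangle}\,P_u^\ell(h)\dd u\, .
\end{equation*}
I would then split the integral over $\Tbb^2$ into the neighbourhood $U$ of $0$ and its complement. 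On $\Tbb^2\setminus U$, estimate \eqref{eq:majoRu} gives $\norm{P_u^\ell}{\Lcal(\Bcal,\Bcal)}\le Cr^\ell$, so this part contributes $O(r^\ell)$ uniformly in $a$ and $h$, which is certainly $O(\ell^{-1-\eta})$. On $U$, insert \eqref{eq:decomp}: the $R_u^\ell$ term is again $O(r^\ell)$ by \eqref{eq:majoRu}, and one is left with the main contribution
\begin{equation*}
 \frac{1}{(2\pi)^2}\int_U e^{-i\langle u,a\rangle}\,\lambda_u^\ell\,\Pi_u(h)\dd u\, .
\end{equation*}

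Next I would replace $\Pi_u$ by $\Ebb_\mu[\cdot]$ using \eqref{eq:Pi0}, at the cost of an error bounded by $C\int_U|u|\,|\lambda_u|^\ell\dd u\,\norm{h}{\Bcal}$; after the change of variables $u=v/\sqrt\ell$ and using that $|\lambda_u|\le e^{-c|u|^2}$ near $0$ (which follows from \eqref{eq:lambdau} after shrinking $U$), this error is $O(\ell^{-3/2})\norm{h}{\Bcal}$, again within the claimed bound. It remains to analyse
\begin{equation*}
 \frac{\Ebb_\mu[h]}{(2\pi)^2}\int_U e^{-i\langle u,a\rangle}\,\lambda_u^\ell\dd u
\end{equation*}
and compare it with $\ell^{-1}\Phi(a/\sqrt\ell)\Ebb_\mu[h]$. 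Here I would use \eqref{eq:lambdau}: writing $\lambda_u^\ell=\exp(\ell\log\lambda_u)$ and $\ell\log\lambda_u=-\tfrac{\ell}{2}\langle\Sigma^2 u,u\rangle+O(\ell|u|^{2+\varepsilon})$, substitute $u=v/\sqrt\ell$, so that $\ell\log\lambda_{v/\sqrt\ell}=-\tfrac12\langle\Sigma^2v,v\rangle+O(\ell^{-\varepsilon/2}|v|^{2+\varepsilon})$. The Jacobian produces a factor $\ell^{-1}$, matching the target normalization, and the exponential $e^{-i\langle v,a/\sqrt\ell\rangle}$ combines with the Gaussian to give, in the limit, the Fourier transform of $e^{-\langle\Sigma^2 v,v\rangle/2}$ evaluated at $a/\sqrt\ell$, which is exactly $(2\pi)^2\Phi(a/\sqrt\ell)$ with $\Phi$ as in the statement. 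The correction terms, expanded via $e^{O(\ell^{-\varepsilon/2}|v|^{2+\varepsilon})}-1=O(\ell^{-\varepsilon/2}|v|^{2+\varepsilon})$ against the Gaussian weight, contribute $O(\ell^{-1-\varepsilon/2})=O(\ell^{-1-\eta})$; the tail of the $v$-integral outside $\sqrt\ell\,U$ is super-polynomially small by the Gaussian bound.

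The uniformity in $a\in\Z^2$ and in $h$ with $\norm{h}{\Bcal}\le 1$ is automatic throughout: all the error bounds above are in terms of $\int_U|u|^j|\lambda_u|^\ell\dd u$ or $r^\ell$ times $\norm{h}{\Bcal}$, with no $a$-dependence, and the oscillatory factor $e^{-i\langle u,a\rangle}$ has modulus $1$. The main technical point — and the step I expect to require the most care — is the comparison between the truncated oscillatory Gaussian integral $\int_{\sqrt\ell\,U}e^{-i\langle v,a/\sqrt\ell\rangle}e^{\ell\log\lambda_{v/\sqrt\ell}}\dd v$ and the full Gaussian Fourier integral $\int_{\R^2}e^{-i\langle v,w\rangle}e^{-\langle\Sigma^2v,v\rangle/2}\dd v$, uniformly in $w=a/\sqrt\ell\in\R^2$: one must control simultaneously the replacement of $\ell\log\lambda_{v/\sqrt\ell}$ by its quadratic part (using \eqref{eq:lambdau} and the bound $|\lambda_u|\le e^{-c|u|^2}$ to dominate the difference) and the extension of the domain of integration, while keeping every bound independent of $w$. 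This is where choosing $\eta\le\varepsilon/2$ enters, and where one uses that the exponent $2+\varepsilon$ in \eqref{eq:lambdau} gives exactly the gain $\ell^{-\varepsilon/2}$ after rescaling. Everything else is a routine application of the Nagaev–Guivarc'h perturbation scheme.
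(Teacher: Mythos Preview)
Your proposal is correct and follows essentially the same route as the paper: Fourier inversion to write $P^\ell(\mathbf 1_{\{S_\ell F=a\}}h)$ as an integral of $P_u^\ell$, exponential control on $\Tbb^2\setminus U$ and on $R_u^\ell$ via \eqref{eq:majoRu}, replacement of $\Pi_u$ by $\Ebb_\mu[\cdot]$ via \eqref{eq:Pi0}, replacement of $\lambda_u^\ell$ by the Gaussian via \eqref{eq:lambdau}, and the change of variables $u=v/\sqrt\ell$ to extract the $\ell^{-1}$ and the $O(\ell^{-1-\varepsilon/2})$ error. The only cosmetic difference is that the paper bounds $|\lambda_u^\ell-e^{-\ell\langle\Sigma^2u,u\rangle/2}|$ directly (via $|\alpha^\ell-\beta^\ell|\le\ell\max(|\alpha|,|\beta|)^{\ell-1}|\alpha-\beta|$) rather than passing through $\log\lambda_u$, and it merges the $\Pi_u$ and $\lambda_u$ replacements into a single estimate, but the resulting bounds and the overall structure are identical.
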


\begin{proof}

Let $Q_{\ell,a}$ be the operator acting on any $h \in \Lbb^1 (A, \mu)$ by:
\begin{equation*}
Q_{\ell,a} (h) (x)
:= P^\ell \left(\mathbf{1}_{\{S_\ell F=a\}} h\right) (x) \, .
\end{equation*}
Due to Equation~\eqref{Puk},
\begin{equation}\label{Qlaprop}
Q_{\ell,a} (h) 
= \frac{1}{(2\pi)^2} \int_{\Tbb^2} e^{-i \langle u, a \rangle} P_u^\ell (h) \dd u\, ,
\end{equation}
and in particular $Q_{\ell,a}$ acts on $\Bcal$.
From Hypothesis~\ref{hyp:HHH}, and up to taking a smaller neighborhood $U$, 
there exist constants $C_0$, $c_0 > 0$ such that $\norm{P_u}{\Lcal (\Bcal, \Bcal)} \leq C_0$ and
\begin{equation*}
\max \left\{ |\lambda_u|, \left|e^{-\frac{\langle\Sigma^2 u, u\rangle}{2}} \right| \right\}
\leq e^{-c_0 |u|^2}
\end{equation*}
for all $u \in U$. Due to Equations~\eqref{Qlaprop} and~\eqref{eq:majoRu}, 
\begin{equation}\label{eqnumero0}
\sup_{a\in\Z^2} \norm{Q_{\ell,a}-\frac{1}{(2\pi)^2}\int_U e^{-i\langle u, a \rangle} \lambda_u^\ell \Pi_u \dd u}{\Lcal(\Bcal,\Bcal)} 
= O(r^\ell).
\end{equation}
In addition, there exists $C_1>0$ such that, for every $u \in U$, 
\begin{align}
\norm{\lambda_u^\ell \Pi_u -e^{-\ell\frac{\langle\Sigma^2 u, u\rangle}{2}} \Pi_0}{\Lcal(\Bcal,\Bcal)} 
& \leq |\lambda_u|^\ell \norm{\Pi_u-\Pi_0}{\Lcal (\Bcal,\Bcal)} + \left| \lambda_u^\ell- e^{-\ell\frac{\langle\Sigma^2 u, u\rangle}{2}} \right| \norm{\Pi_0}{\Lcal(\Bcal,\Bcal)} \nonumber \\
& \leq C_1 (|u|+\ell |u|^{2+\varepsilon})e^{-\ell \frac{c_0|u|^2}{2} }, 
\nonumber
\end{align}
due to the asymptotic expansion of $u \mapsto \lambda_u$ and to Equation~\eqref{eq:Pi0}.
Hence, using the change of variable $u=v/\sqrt{\ell}$,
\begin{align}
\sup_{a\in\Z^2}\left\| \frac{1}{(2\pi)^2} \int_U e^{-i\langle u, a \rangle} \lambda_u^\ell \Pi_u \dd u  \right. & 
\left. - \frac{1}{(2\pi)^2}\int_U e^{-i\langle u, a \rangle} e^{-\ell\frac{\langle\Sigma^2 u, u\rangle}{2}} \Pi_0 \dd u \right\|_{\Lcal(\Bcal,\Bcal)} \nonumber \\
& \leq C_1 \int_U (|u|+\ell |u|^{2+\varepsilon})e^{-\ell\frac{c_0|u|^2}{2}} \dd u \nonumber\\
& \leq \frac{C_1}{\ell} \int_{\R^2} \left(\frac{|v|}{\sqrt{\ell}}+\ell \frac{|v|^{2+\varepsilon}}{\ell^{1+\frac{\varepsilon}{2}}} \right) e^{-\frac{c_0|v|^2}{2}} \dd v
 = O\left( \frac{1}{\ell^{1+\frac{\varepsilon}{2}}} \right). \label{eqnumero5}
\end{align}
Finally, using the same change of variable,
\begin{align}
\sup_{a\in\Z^2}\left| \frac{1}{(2\pi)^2} \int_U \right. & \left. e^{-i\langle u, a \rangle} e^{-\ell\frac{\langle\Sigma^2 u, u\rangle}{2}} \dd u - \frac {1}{\ell}\Phi\left(\frac{a}{\sqrt{\ell}}\right) \right| \nonumber\\
& = \sup_{a\in\Z^2}\left| \frac{1}{(2\pi)^2 \ell} \int_{\sqrt{\ell}\, U} e^{-i\frac{\langle v, a \rangle}{\sqrt{\ell}}} e^{-\frac{\langle\Sigma^2 v, v\rangle}{2}} \dd v- \frac {1}{(2\pi)^2\ell}\int_{\R^2} e^{-i\frac{\langle v, a \rangle}{\sqrt{\ell}}} e^{-\frac{\langle\Sigma^2 u, u\rangle}{2}} \dd v \right| \nonumber\\
& \leq \left| \frac{1}{(2\pi)^2 \ell} \int_{\R^2\setminus\sqrt{\ell}\, U} e^{-\frac{\langle\Sigma^2 v, v\rangle}{2}} \dd v\right| = O \left(\ell^{-2} \right)\, . \label{eqnumero6}
\end{align}
The lemma follows from Equations~\eqref{eqnumero0}, \eqref{eqnumero5} and~\eqref{eqnumero6}.
\end{proof}

\section{Proof of Theorem~\ref{thm:gene}}
\label{sec:operators}

This section is devoted to the proof of Theorem~\ref{thm:gene}. We proceed in two steps. First, we prove the 
convergence in distribution for $t=1$ and then we shall extend the convergence in distribution to a functional convergence.
The method we use here is close to the one used in~\cite{PeneThomine:2019}. In ~\cite{PeneThomine:2019}, we considered 
a wide family of dynamical systems ($\Z^d$-extensions with $d\in\{1,2\}$ and  $(S_nF)_n$ satisfying a standard or 
nonstandard central limit theorem involving a stable distribution), but we considered also a specific family of observables $f$ 
(which were assumed to be constant on each cell, i.e.\ satisfying $f(x,a)=f(y,a)$ for every $x,y\in A$). 
In the present paper, we focus on more specific dynamical systems (with $d=2$ and $(S_nF)_n$ satisfying a standard 
central limit theorem), which includes the Lorentz process. This more strigent context allows significative simplifications 
(due to summable error terms) which make much clearer the understanding of our argument and allow us to generalize 
the method used in~\cite{PeneThomine:2019} to more general observables.

\subsection{Convergence in distribution for $t=1$}

This section is devoted to the proof of Theorem~\ref{thm:gene} for $t=1$. 
In other words, under the hypotheses of Theorem~\ref{thm:gene}, we shall show that:
\begin{equation}\label{eq:TCL}
\left(\frac{\widetilde{S}_n g}{\ln (n)},\frac{\widetilde{S}_n f}{\sqrt{\ln (n)}}\right)
\longrightarrow \left(\int_{\widetilde{A}}g\dd\widetilde{\mu}\, \Phi(0)\Ecal, \, \widetilde \sigma (f) \sqrt{\Phi(0)\Ecal}\, \Ncal\right),\quad\mbox{as}\ n\to +\infty\, ,
\end{equation}
where the convergence is in distribution as $n \to + \infty$, with respect to any absolutely continuous probability measure.

\begin{proof}[Proof of Theorem~\ref{thm:gene} for $t=1$]
Since $\widetilde{T}$ is ergodic, due to Hopf's ergodic theorem~\cite[\S$14$, Individueller Ergodensatz f\"ur Abbildungen]{Hopf:1937}, 
we assume without any loss of generality that
$g(x,a) = \mathbf{1}_0 (a)$,
which shall significantly 
simplify the computations in the proof of Lemma~\ref{lem:Borne01}.

\smallskip

Set $\afrak_n=\ln (n)$, so that $\afrak_n\sim \sum_{k=1}^n k^{-1}$ as $n\to +\infty$. Due to \cite[Theorem~1]{Zweimuller:2007}, 
it is enough to prove the convergence in distribution with respect to $\widetilde{T}_*(\mu\otimes\delta_0)$, 
i.e.\ the convergence in distribution of $(\frac{\Zcal_n g}{\afrak_n},\frac{\Zcal_n f}{\sqrt{\afrak_ n}})_n$ 
with respect to $\mu$, where:
\begin{equation*}
\Zcal_n h(x):=(\widetilde{S}_n h)\circ\widetilde{T}(x,0)
= \sum_{k=1}^{n} h\left(T^kx,S_kF(x)\right).
\end{equation*}
The convergence in distribution of  $(\frac{\Zcal_n g}{\afrak_n},\frac{\Zcal_n f}{\sqrt{\afrak_n}})_n$ 
is equivalent to the convergence in distribution of 
$\alpha\frac{\Zcal_n g}{\afrak_n}+\beta\frac{\Zcal_n f}{\sqrt{\afrak_n}}$ 
for every $\alpha,\beta\in\R$. Let us fix $\alpha$, $\beta\in\R$ for the remainder of the proof.

\smallskip

We use the method of moments. Setting $h_n(x,a):=\frac\alpha{\afrak_n}g(x,a)+\frac\beta{\sqrt{\afrak_n}} f(x,a)$, 
due to Carleman's criterion~\cite[Chap.~XV.4]{Feller:1966}, it is enough to prove that, 
for all $m \geq 0$,
\begin{equation}\label{cvgcemoment}
 \lim_{n\to +\infty}\Ebb_\mu \left[(\Zcal_n h_n)^m\right]
 = \Ebb\left[(\alpha\Phi(0)\Ecal+\beta\sqrt{\Phi(0)\Ecal}\, \widetilde\sigma (f) \Ncal)^m\right]\, .
\end{equation}
Let us fix an integer $m \geq 0$ for the remainder of the proof. Then, for all $n$:
\begin{align*}
\Ebb_\mu \left[ (\Zcal_nh_n)^m \right] 
& = \Ebb_\mu \left[ \left(\sum_{k=1}^n (h_n (T^k (\cdot),S_k F (\cdot)) \right)^m \right] \\
& = \sum_{k_1,\ldots,k_m = 1}^n \sum_{d_1,\ldots,d_m \in \Z^2} \Ebb_\mu \left[ \prod_{s=1}^m h_n(T^{k_s} (\cdot),d_s) \mathbf{1}_{\{S_{k_s} F (\cdot) =d_s\}} \right]\, .
\end{align*}
Gathering the terms for which the $k_s$ (with their multiplicities) are the same,
we obtain 
\begin{equation}
\Ebb_\mu \left[ (\Zcal_nh_n)^m \right] 
= \sum_{q=1}^m \sum_{\substack{N_j \geq 1 \\ N_1+\cdots+N_q=m}} c_{\mathbf{N}} A_{n;q,\mathbf{N}}\, , \label{momentAn}
\end{equation}
where $\mathbf{N}=(N_j)_{1 \leq j \leq q}$, where $c_{\mathbf{N}}$ is the cardinal of the set of maps 
$\phi: \ \{1,\ldots,m\} \to \{1,\ldots,q\}$ such that $| \phi^{-1} (\{j\}) | = N_j$ for all $j\in\{1,\ldots,q\}$, 
and where
\begin{align*}
A_{n;q;\mathbf{N}} 
& := \sum_{1 \leq n_1< \ldots <n_q \leq n} \sum_{\mathbf{a} \in (\Z^2)^q} \Ebb_\mu \left[ \prod_{j=1}^q \left( h_n(T^{n_j}(\cdot),a_j)^{N_j} \mathbf{1}_{\{S_{n_j} F (\cdot)=a_j\}} \right) \right] \\
& = \sum_{1 \leq n_1< \ldots <n_q \leq n} \sum_{\mathbf{a} \in (\Z^2)^q} \Ebb_\mu \left[ \prod_{j=1}^q \left( h_n(T^{n_j}(\cdot),a_j)^{N_j} \mathbf{1}_{\{S_{n_j} F(\cdot) -S_{n_{j-1}} F(\cdot) = a_j-a_{j-1}\}} \right) \right]\\
& = \sum_{\mathbf{a} \in (\Z^2)^q} \left[ \sum_{\boldsymbol{\ell} \in E_{q,n}} \Ebb_\mu \left[ \prod_{j=1}^q \left( h_n(T^{\ell_j}(\cdot),a_j)^{N_j}\mathbf{1}_{\{S_{\ell_j} F (\cdot) = a_j-a_{j-1}\}}\right) \circ T^{\ell_1+\ldots+\ell_{j-1}} (\cdot) \right] \right]\, ,
\end{align*}
with the notations $\mathbf a=(a_1,\ldots,a_q)$, $n_0:=0$, $a_0:=0$ and
\begin{equation*}
E_{q,n}
= \left\{ \boldsymbol{\ell}=({\ell}_1,\ldots,\ell_q) \in \{1, \ldots ,n\}^q\ :\ \sum_{j=1}^q \ell_j \leq n \right\}\, ;
\end{equation*}
$\ell_j$ corresponds to $n_j-n_{j-1}$. As in the proof of Lemma~\ref{lem:0}, 
for all $\ell \in \N$ and $a \in \Z^2$, we define operators $Q_{\ell,a}$ and 
$\widetilde{Q}_{\ell,a,b,N,n}$ acting on $\Bcal$ by:
\begin{equation*}
\begin{array}{lll}
 Q_{\ell,a} (G) (x) & := & P^\ell \left(\mathbf{1}_{\{S_\ell F (x)=a\}} G\right) (x)\, , \\
 \widetilde Q_{\ell,a,b,N,n} (G) (x) & := & h_n(x,a)^{N}  Q_{\ell,a-b} (G) (x) \, .
\end{array}
\end{equation*}
Using $\Ebb_\mu[\cdot]=\Ebb_\mu [P^{\ell_1+\ldots+\ell_q}(\cdot)]$ and using repeatedly 
$P^k (G \circ T^k \cdot H) = G \cdot P^k (H)$, we obtain
\begin{equation}\label{eq:AExpressionQ}
A_{n;q;\mathbf{N}} 
= \sum_{\mathbf{a} \in(\Z^2)^q}  \sum_{\boldsymbol{\ell} \in E_{q,n}} \Ebb_\mu \left[ \widetilde Q_{\ell_q,a_q,a_{q-1},N_q,n} \cdots \widetilde Q_{\ell_1,a_1,0,N_1,n}(\mathbf{1}) \right] .
\end{equation}
We further split the operators $Q_{\ell,a}$:
\begin{equation}\label{devQ}
Q_{\ell,a} 
= Q_{\ell,a}^{(0)} + Q_{\ell,a}^{(1)} 
\text{ with }
Q_{\ell,a}^{(0)} 
:= \Phi(0)\frac{\Ebb_\mu[\cdot]}{\ell}\, .
\end{equation}

We assume without loss of generality that $\eta = \varkappa/4 \leq 1$.
Note that 
\begin{equation}\label{eq:Q1}
\norm{ Q^{(1)}_{\ell,a}}{\Lcal (\Bcal, \Bcal)} = O((1+|a|^
{2\eta}
)\ell^{-1-\eta})
\end{equation}
by Hypothesis~\eqref{hyp:0} and using the fact that $|\Phi(x)-\Phi(0)|\le \mathfrak c \min(x^2,1)\le \mathfrak c x^{2\eta}$ for some $\mathfrak c>0$. 
Thus, for all $N \geq 1$,
\begin{equation*}
  \norm{h_n(\cdot,a)^N Q^{(1)}_{\ell,a-b}}{\Lcal (\Bcal, \Bcal)} 
  \leq C\frac{1+|a-b|^{\frac{\varkappa}{2}}}{\afrak_n^{\frac{N}{2}}\ell^{1+\eta}}
  \left(\frac{\mathbf{1}_0(a)}{\afrak_n^{\frac{N}{2}}} + \norm{f(\cdot,a)\times\cdot}{\Lcal(\Bcal,\Bcal)}^N\right) .
\end{equation*}
We introduce these operators $Q_{\ell,a}^{(0)}$ and $Q_{\ell,a}^{(1)}$ 
into~\eqref{eq:AExpressionQ}, creating new data we need to track: 
the index of the operator we use at each point in the weighted path. 
Fix $n$, $q$ and $\mathbf{N}$. Given $\boldsymbol{\varepsilon} = (\varepsilon_1, \ldots, \varepsilon_q) \in \{0,1\}^q$ 
and $s \in \Z^2$, write:
\begin{align*}
B_{s,\boldsymbol{\ell}, \mathbf{N}}^{\boldsymbol{\varepsilon}} (G)
& := \sum_{\substack{a_0,\ldots,a_q \in \Z^2 \\ a_0=s}} h_n(\cdot,a_q)^{N_q} Q_{\ell_q,a_q-a_{q-1}}^{(\varepsilon_q)} \cdots h_n(\cdot,a_1)^{N_1}\, Q_{\ell_1,a_1-a_0}^{(\varepsilon_1)} (G) \, , \\
b_{s, \boldsymbol{\ell}, \mathbf{N}}^{\boldsymbol{\varepsilon}} (G)
& := \sum_{\mathbf{a} \in (\Z^2)^q}  \Ebb_\mu \left[ h_n(\cdot,a_q)^{N_q}Q_{\ell_q,a_q-a_{q-1}}^{(\varepsilon_q)} \ldots h_n(\cdot,a_1)^{N_1}\, Q_{\ell_1,a_1-s}^{(\varepsilon_1)} (G) \right]
= \Ebb_\mu \left[B_{s,\boldsymbol{\ell},\mathbf{N}}^{\boldsymbol{\varepsilon}} (G)\right], \\
A_{n;q;\mathbf{N}}^{\boldsymbol{\varepsilon}} 
& := \sum_{\boldsymbol{\ell} \in E_{q,n}} b_{0,\boldsymbol{\ell}, \mathbf{N}}^{\boldsymbol{\varepsilon}}(\mathbf{1}) \, ,
\end{align*}
so that:
\begin{equation*}
A_{n;q;\mathbf{N}} 
= \sum_{\boldsymbol{\varepsilon}\in\{0,1\}^q} A_{n;q;\mathbf{N}}^{\boldsymbol{\varepsilon}}
= \sum_{\boldsymbol{\varepsilon}\in\{0,1\}^q} \sum_{\boldsymbol{\ell} \in E_{q,n}} b_{0, \boldsymbol{\ell}, \mathbf{N}}^{\boldsymbol{\varepsilon}}(\mathbf{1}).
\end{equation*}

\smallskip

The goal is now to understand which combinatorial data $(\mathbf{N}, \boldsymbol{\varepsilon})$ is negligible 
as $n \to +\infty$, and which represent the majority of the $m$-th moment. The following properties are directly implied by the definitions.

\begin{properties}\label{rmk:proprietesbn}

Consider a single linear form $b_{s, \boldsymbol{\ell}, \mathbf{N}}^{\boldsymbol{\varepsilon}}$. 
For all $1 \leq i \leq q$, the terms on the right side of $Q_{\ell_i,a_i-a_{i-1}}^{(\varepsilon_i)}$ 
depend only on $a_1,\ldots,a_{i-1}$, and the terms on its left side only depend on $a_i,\ldots,a_q$. 
Hence:
\begin{enumerate}[(I)]
\item Since $Q_{\ell,a}^{(0)}$ does not depend on $a$, the value of $b_{s, (\ell_0,\boldsymbol{\ell}),(N_0,\mathbf{N})}^{(0,\boldsymbol\varepsilon)}$ 
does not depend on $s$. Without loss of generality, we shall choose $s$ to be $0$ when $\varepsilon_1=0$.

\smallskip

\item 
$b_{s, (\ell), (N)}^{(0)} (\cdot) = \frac{\Phi(0)}{\ell} \sum_{a \in \Z^2} \Ebb_{\mu}[h_n(\cdot,a)^N] \Ebb_\mu[\cdot]$ for all $\ell$, $N \geq 1$, so:
\begin{align*}
 b_{s, (\ell), (1)}^{(0)} (\cdot) & = \frac{\Phi(0) \alpha}{\ell \afrak_n} \Ebb_\mu[\cdot] \, , \\
 b_{s, (\ell), (2)}^{(0)} (\cdot) & = \frac{\Phi(0) \beta^2}{\ell \afrak_n} \left(\sum_{a \in \Z^2} \Ebb_{\mu}[f(\cdot,a)^2] \Ebb_\mu[\cdot]+O\left(\frac{1}{\sqrt{\afrak_n}} \right)\right) \, .
\end{align*}

\smallskip

\item $b_{s, (\boldsymbol{\ell},\ell_0,\boldsymbol{\ell}'),(\mathbf{N},N_0,\mathbf{N}')}^{(\boldsymbol{\varepsilon},0,\boldsymbol{\varepsilon}')} 
= \Ebb_\mu[B_{0, (\ell_0,\boldsymbol{\ell}'),(N_0,\mathbf{N}')}^{(0,\boldsymbol\varepsilon')}(\mathbf{1})] \Ebb_\mu [ B_{s, \boldsymbol{\ell},\mathbf{N}}^{\boldsymbol\varepsilon}(\cdot)]$,
i.e.
\begin{equation*}
b_{s, (\boldsymbol{\ell},\ell_0,\boldsymbol{\ell}'),(\mathbf{N},N_0,\mathbf{N}')}^{(\boldsymbol{\varepsilon},0,\boldsymbol{\varepsilon}')} (\cdot)
= b_{0, (\ell_0,\boldsymbol{\ell}'),(N_0,\mathbf{N}')}^{(0,\boldsymbol\varepsilon')}(\mathbf{1}) b_{s, \boldsymbol{\ell},\mathbf{N}}^{\boldsymbol\varepsilon}(\cdot) \, .
\end{equation*}

\smallskip

\item In particular, $b_{s, (\boldsymbol{\ell},\ell_0),(\mathbf{N},1)}^{(\boldsymbol{\varepsilon},0)} = \frac{\Phi(0) \alpha}{\ell \afrak_n} b_{s, \boldsymbol{\ell},\mathbf{N}}^{\boldsymbol\varepsilon}(\cdot)$, and:
\begin{align*}
b_{s, (\boldsymbol{\ell},\ell_0,\ell_0',\boldsymbol{\ell}'),(\mathbf{N},1,N_0',\mathbf{N}')}^{(\boldsymbol{\varepsilon},0,0,\boldsymbol{\varepsilon}')} 
& = b_{0, (\ell_0',\boldsymbol{\ell}'),(N'_0,\mathbf{N}')}^{(0,\boldsymbol\varepsilon')} (\mathbf{1}) b_{0, (\ell_0),(1)}^{(0)}(\mathbf{1}) b_{s, \boldsymbol{\ell},\mathbf{N}}^{\boldsymbol\varepsilon}(\cdot) \\
& = \frac{\Phi(0) \alpha}{\ell_0 \afrak_n} b_{0, (\ell_0',\boldsymbol{\ell}'),(N'_0,\mathbf{N}')}^{(0,\boldsymbol\varepsilon')} (\mathbf{1}) b_{s, \boldsymbol{\ell},\mathbf{N}}^{\boldsymbol\varepsilon}(\cdot) \, .
\end{align*}

\smallskip

\item \begin{equation*}
b_{s, (\ell_1, \ldots,\ell_q),(N_1,N_2, \ldots,N_q)}^{(0,1,\ldots,1)} (\mathbf{1})
= \frac{\Phi(0)}{\ell_1} \sum_{a_1 \in \Z^2}  b_{a_1, (\ell_2, \ldots,\ell_q),(N_2, \ldots,N_q)}^{(1,\ldots,1)} (h_n(\cdot,a_1)^{N_1}).
\end{equation*}
\end{enumerate}
\end{properties}

In particular, we can estimate the coefficients corresponding to $\boldsymbol\varepsilon = (0,1)$ and $\mathbf{N} = (1,1)$, 
which will play an important role later on.

\begin{lemma}
\label{lem:Borne01}
 
 Under the hypotheses of Theorem~\ref{thm:gene} and with the notations of its proof,
 \begin{equation}
 \label{eq:Evaluation01}
  b_{s, (\ell,\ell'),(1,1)}^{(0,1)}(\mathbf{1})
  = \frac{\Phi(0) \beta^2}{\afrak_n \ell} \int_{\widetilde{A}} f\circ\widetilde{T}^{\ell'}\cdot f\dd\widetilde{\mu}+ O\left(\frac{1}{\ell\ell'^{1+\eta}\afrak_n^\frac{3}{2}}\right) \, .
 \end{equation}
\end{lemma}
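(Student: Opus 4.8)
The plan is to unwind the definition of $b^{(0,1)}_{s,(\ell,\ell'),(1,1)}(\mathbf 1)$ and reduce it to a known limit-theorem-style computation. By Property~\ref{rmk:proprietesbn}(V) with $q=2$, $N_1=N_2=1$ and the single right operator being of type $1$, we have
\begin{equation*}
 b^{(0,1)}_{s,(\ell,\ell'),(1,1)}(\mathbf 1)
 = \frac{\Phi(0)}{\ell}\sum_{a_1\in\Z^2} b^{(1)}_{a_1,(\ell'),(1)}\!\left(h_n(\cdot,a_1)\right)\,\Ebb_\mu[\cdot]\big|_{(\cdot)=\mathbf 1},
\end{equation*}
so up to the scalar prefactor $\Phi(0)/\ell$ everything reduces to controlling $\sum_{a_1}b^{(1)}_{a_1,(\ell'),(1)}(h_n(\cdot,a_1))$. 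Expanding $b^{(1)}_{a_1,(\ell'),(1)}(G)=\sum_{a_2\in\Z^2}\Ebb_\mu[h_n(\cdot,a_2)\,Q^{(1)}_{\ell',a_2-a_1}(G)]$ and recalling $Q_{\ell',a}=Q^{(0)}_{\ell',a}+Q^{(1)}_{\ell',a}$, I would write $Q^{(1)}_{\ell',a_2-a_1}=Q_{\ell',a_2-a_1}-Q^{(0)}_{\ell',a_2-a_1}$ and treat the two pieces separately.

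For the $Q_{\ell',a_2-a_1}$ piece: summing over $a_1$ first collapses the indicator, since $\sum_{a_1}Q_{\ell',a_2-a_1}(G)=\sum_{b}P^{\ell'}(\mathbf 1_{\{S_{\ell'}F=b\}}G)=P^{\ell'}(G)$. Hence $\sum_{a_1,a_2}\Ebb_\mu[h_n(\cdot,a_2)\,Q_{\ell',a_2-a_1}(h_n(\cdot,a_1))]$ would, after using $h_n=\frac{\alpha}{\afrak_n}\mathbf 1_0+\frac{\beta}{\sqrt{\afrak_n}}f$, the normalization $g(\cdot,a)=\mathbf 1_0(a)$, and $\Ebb_\mu[h_n(T^{\ell'}\cdot,a_2)\,G\circ?]$-type dualities $\Ebb_\mu[u\cdot P^{\ell'}v]=\Ebb_\mu[u\circ T^{\ell'}\cdot v]$, produce exactly the billiard correlation
\begin{equation*}
 \frac{\beta^2}{\afrak_n}\sum_{a_1,a_2\in\Z^2}\Ebb_\mu\!\left[f(\cdot,a_2)\circ T^{\ell'}\cdot\mathbf 1_{\{S_{\ell'}F=a_2-a_1\}}\cdot f(\cdot,a_1)\right]
 = \frac{\beta^2}{\afrak_n}\int_{\widetilde A}f\circ\widetilde T^{\ell'}\cdot f\dd\widetilde\mu,
\end{equation*}
using that $\widetilde T^{\ell'}(x,a_1)=(T^{\ell'}x,a_1+S_{\ell'}F(x))$ so the $\Z^2$-label shift is exactly $S_{\ell'}F$; the cross terms involving $\alpha$ carry an extra $\afrak_n^{-1/2}$ or $\afrak_n^{-1}$ and are absorbed into the error, as is the convergence of the tail of the $a$-sum (controlled by $\sum_a\norm{f(\cdot,a)}{\Bcal}<\infty$ and $\sum_a\norm{f(\cdot,a)\times\cdot}{\Lcal(\Bcal,\Bcal)}<\infty$). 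I should double-check that multiplying by the unbounded-in-$a$ weight is not needed here — it is only $\ell'$-decay, supplied by the factor $1/\ell$ already extracted, plus the absolute summability of the correlation series~\eqref{AAA2}.

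For the $Q^{(0)}_{\ell',a_2-a_1}=\Phi(0)\Ebb_\mu[\cdot]/\ell'$ piece: here $\sum_{a_1}$ diverges unless killed, but in fact $\sum_{a_1,a_2}\Ebb_\mu[h_n(\cdot,a_2)]\,\frac{\Phi(0)}{\ell'}\Ebb_\mu[h_n(\cdot,a_1)]$ — wait, this sum over $a_1$ of $\Ebb_\mu[h_n(\cdot,a_1)]$ is a single number; precisely it equals $\frac{\alpha}{\afrak_n}+\frac{\beta}{\sqrt{\afrak_n}}\sum_a\Ebb_\mu[f(\cdot,a)]$, and $\sum_a\Ebb_\mu[f(\cdot,a)]=\int_{\widetilde A}f\dd\widetilde\mu=0$ by hypothesis, so this whole contribution is $O(\afrak_n^{-1})\cdot O(\afrak_n^{-1})/\ell'=O(\afrak_n^{-2}/\ell')$, which is even smaller than claimed. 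So the $Q^{(0)}$ subtraction contributes only to the error term. Combining, $\sum_{a_1}b^{(1)}_{a_1,(\ell'),(1)}(h_n(\cdot,a_1))=\frac{\beta^2}{\afrak_n}\int_{\widetilde A}f\circ\widetilde T^{\ell'}\cdot f\dd\widetilde\mu+O(\afrak_n^{-3/2}\ell'^{-1-\eta})$, where the error's $\ell'$-decay comes from bounding the correlation by $\norm{Q^{(1)}_{\ell',\cdot}}{\Lcal(\Bcal,\Bcal)}=O(\ell'^{-1-\eta})$ via~\eqref{eq:Q1} together with summability of the weights in $a$; multiplying by $\Phi(0)/\ell$ gives~\eqref{eq:Evaluation01}.

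The main obstacle I anticipate is the bookkeeping of the error term: one must verify that the $\ell'^{-1-\eta}$ decay really survives after summing $Q^{(1)}_{\ell',a_2-a_1}$ against the two weights $h_n(\cdot,a_1)$ and $h_n(\cdot,a_2)$ over $a_1,a_2\in\Z^2$, i.e.\ that $\sum_{a_1,a_2}(1+|a_2-a_1|^{2\eta})\norm{f(\cdot,a_2)\times\cdot}{\Lcal(\Bcal,\Bcal)}\norm{f(\cdot,a_1)}{\Bcal}<\infty$, which follows from the hypothesis $\sum_a(1+|a|^\varkappa)\norm{f(\cdot,a)\times\cdot}{\Lcal(\Bcal,\Bcal)}<\infty$ with $2\eta=\varkappa/2\le\varkappa$ and the triangle inequality $|a_2-a_1|^{2\eta}\lesssim|a_1|^{2\eta}+|a_2|^{2\eta}$, distributing the extra power onto whichever of $a_1,a_2$ is larger. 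The second delicate point is simply being careful that the "leading" term is genuinely the full billiard correlation $\int_{\widetilde A}f\circ\widetilde T^{\ell'}\cdot f\dd\widetilde\mu$ and not just its $Q^{(1)}$-part — this is why the collapsing identity $\sum_{a_1}Q_{\ell',\cdot-a_1}=P^{\ell'}$ (before splitting off $Q^{(0)}$) is the right move, and why the $\int_{\widetilde A}f\dd\widetilde\mu=0$ hypothesis is exactly what is needed to discard the $Q^{(0)}$ remainder.
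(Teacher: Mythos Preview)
Your approach is essentially the same as the paper's: both compute
\[
b^{(0,1)}_{s,(\ell,\ell'),(1,1)}(\mathbf 1)=\frac{\Phi(0)}{\ell}\sum_{a,b}\Ebb_\mu\!\left[h_n(\cdot,b)\,Q^{(1)}_{\ell',b-a}(h_n(\cdot,a))\right],
\]
then use $Q^{(1)}=Q-Q^{(0)}$ together with $\int_{\widetilde A}f\,\dd\widetilde\mu=0$ to extract the billiard correlation. The paper splits the \emph{outer} factor $h_n(\cdot,b)$ first (bounding the $g(\cdot,b)$ part directly via $\|Q^{(1)}_{\ell',\cdot}\|$, then for the $f(\cdot,b)$ part writing $Q^{(1)}=Q-Q^{(0)}$ and observing that the $Q^{(0)}$ contribution vanishes \emph{exactly}); you split the operator first. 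Both are fine.

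There is, however, a real bookkeeping slip. You assert that the $Q^{(0)}$ piece, which you correctly evaluate as $\frac{\Phi(0)}{\ell'}\big(\sum_a\Ebb_\mu[h_n(\cdot,a)]\big)^2=\frac{\Phi(0)\alpha^2}{\ell'\afrak_n^2}$, is ``even smaller than claimed''. It is not: $O\!\big(\tfrac{1}{\ell'\afrak_n^2}\big)$ is \emph{not} dominated by $O\!\big(\tfrac{1}{\ell'^{1+\eta}\afrak_n^{3/2}}\big)$ uniformly in $\ell'\le n$ (take $\ell'=n$). The same issue occurs in the $Q$ piece: the $g$--$g$ contribution there is $\frac{\alpha^2}{\afrak_n^2}\,\mu(S_{\ell'}F=0)=O\!\big(\tfrac{1}{\ell'\afrak_n^2}\big)$, which you wave away as ``absorbed into the error'' with an extra $\afrak_n^{-1}$, but again the $\ell'$-decay is only $\ell'^{-1}$, not $\ell'^{-1-\eta}$.

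The fix is precisely the cancellation you did not write down: these two problematic terms occur with opposite signs and combine to
\[
\frac{\alpha^2}{\afrak_n^2}\Big(\mu(S_{\ell'}F=0)-\frac{\Phi(0)}{\ell'}\Big)=\frac{\alpha^2}{\afrak_n^2}\cdot O(\ell'^{-1-\eta})
\]
by Hypothesis~\eqref{hyp:0} applied to $h=\mathbf 1$, $a=0$. This now fits in the claimed error. (The cross terms are fine: splitting $Q=Q^{(0)}+Q^{(1)}$ and using $\sum_a\Ebb_\mu[f(\cdot,a)]=0$ kills the $Q^{(0)}$ part, leaving $O(\ell'^{-1-\eta}\afrak_n^{-3/2})$.) The paper's ordering sidesteps this subtlety because, when the outer factor is $g(\cdot,b)=\mathbf 1_0(b)$, one bounds directly with $\|Q^{(1)}_{\ell',\cdot}\|$ and never introduces the two large-but-cancelling terms.
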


\begin{proof}[Proof of Lemma~\ref{lem:Borne01}]

Applying Point~$(V)$ of Properties~\ref{rmk:proprietesbn},
\begin{equation*}
 b_{s, (\ell,\ell'),(1,1)}^{(0,1)}(\mathbf{1}) 
 = \frac{\Phi(0)}{\ell} \sum_{a,b\in\Z^2} \Ebb_\mu \left[h_n(\cdot,b) Q_{\ell',b-a}^{(1)} (h_n(\cdot,a))\right] \, .
\end{equation*}
Recall that $h_n=\frac{\alpha}{\afrak_n} g + \frac{\beta}{\sqrt{\afrak_n}} f$ with $g (x,a) =\mathbf{1}_0(a)$. 
By Equation~\eqref{eq:Q1}, $\norm{ Q^{(1)}_{\ell,a}}{\Lcal (\Bcal, \Bcal)} = O((1+|a|^{2\eta})\ell^{-1-\eta})$, 
therefore
\begin{align}
 \frac{\Phi(0)}{\ell} \sum_{a,b\in\Z^2} \Ebb_\mu \left[\frac{\alpha}{\mathfrak a_n} g(\cdot,b) Q_{\ell',b-a}^{(1)} (h_n(\cdot,a))\right]
 & = O\left(\frac {1}{\ell\, \ell'{}^{1+\eta}\, \mathfrak a_n} \sum_{a\in\Z^2}(1+|a|^{2\eta}) \norm{h_n(\cdot,a)}{\Bcal}\right) \nonumber\\
 & = O\left(\frac {1}{\ell\, \ell'{}^{1+\eta}\, \mathfrak a_n} \sum_{a\in\Z^2}(1+|a|^{2\eta})  \left(\frac{\mathbf{1}_0(a)}{\mathfrak a_n}+\frac{\norm{f(\cdot,a)}{\Bcal}}{\sqrt{\mathfrak a_n}}\right)\right) \nonumber\\
 & = O\left(\frac {1}{\ell\, \ell'{}^{1+\eta}\, \mathfrak a_n^{\frac 32}} \right)\, ,
\label{eq:AAA1}
\end{align}
since $\sum_{a\in\Z^2}(1+|a|^{2\eta})\norm{f(\cdot,a)}{\Bcal}<\infty$. In the same way,
\begin{align*}
 \frac{\Phi(0)}{\ell} 
 & \sum_{a,b\in\Z^2} \Ebb_\mu \left[\frac{\beta}{\sqrt{\mathfrak a_n}} f(\cdot,b) Q_{\ell',b-a}^{(1)} (h_n(\cdot,a))\right]\\
 & = \frac{\Phi(0) \beta^2}{\afrak_n \ell}\sum_{a,b\in\Z^2} \Ebb_\mu \left[ f(\cdot,b) Q_{\ell',b-a}^{(1)} (f(\cdot,a))\right]
 + \frac{\Phi(0)\alpha\beta}{\ell\mathfrak a_n^{\frac 32}} \sum_{b\in\Z^2} \Ebb_\mu \left[ f(\cdot,b) Q_{\ell',b}^{(1)} (\mathbf{1})\right]\\
 & = \frac{\Phi(0) \beta^2}{\afrak_n \ell}\sum_{a,b\in\Z^2}\Ebb_\mu \left[ f(\cdot,b) Q_{\ell',b-a} (f(\cdot,a))\right]
 + \frac{\Phi(0)\alpha\beta}{\ell\mathfrak a_n^{\frac 32}} \sum_{b\in\Z^2} \norm{ f(\cdot,b)\times\cdot}{\Lcal(\Bcal,\Bcal)} O\left(\frac{1+|b|^{2\eta}}{\ell'{}^{1+\eta}}\right) \\
 & =\frac{\Phi(0) \beta^2}{\afrak_n \ell} \int_{\widetilde{A}} f\circ\widetilde{T}^{\ell'}\cdot f\dd\widetilde{\mu}+O\left(\frac {1}{\ell\, \ell'{}^{1+\eta}\, \mathfrak a_n^{\frac 32}}\right)\, ,
\end{align*}
where we used the fact that $\sum_{a,b\in\Z^2}\Ebb_\mu\left[f(\cdot,b)Q^{(0)}_{\ell',b-a}(f(\cdot,a))\right]=\frac{\Phi(0)}{\ell'}\left(\int_{\widetilde A}f \dd\widetilde\mu\right)^2=0$.
The claim follows from this, combined with Equation~\eqref{eq:AAA1}.
\end{proof}

Given a sequence $\boldsymbol\varepsilon \in \{0, 1\}^q$, we can iterate 
Point~$(III)$ of Properties~\ref{rmk:proprietesbn} to cut $b_{s, \boldsymbol{\ell},\mathbf{N}}^{\boldsymbol\varepsilon}$ 
into smaller pieces, for which $0$ may only appear at the beginning of the associated 
sequences of indices, and then use Point~$(V)$ to transform the heading $\varepsilon_i = 0$. 
Let $m_1<m_2<\cdots<m_K$ be the indices $i\in\{1,\ldots,q\}$ such that $\varepsilon_i=0$. 
We use the conventions that $m_{K+1}:=q+1$ and $\varepsilon_{q+1} := 0$, that 
$b_{s,\boldsymbol{\ell}, \mathbf{N}}^{\boldsymbol{\varepsilon}} \equiv 1$ if $q=0$, 
and that an empty product is also equal to $1$. Then:
\begin{align*}
b_{s,\boldsymbol{\ell}, \mathbf{N}}^{\boldsymbol{\varepsilon}}(\mathbf{1})
& = b_{s,(\ell_1,\ldots,\ell_{m_1-1}), (N_1,\ldots,N_{m_1-1})}^{(1,\ldots,1)}(\mathbf{1}) \prod_{i=1}^K b_{0,(\ell_{m_i}, \ldots,\ell_{m_{i+1}-1}),(N_{m_i}, \ldots,N_{m_{i+1}-1})}^{(0,1, \ldots ,1)}(\mathbf{1}) \\
& = (\Phi(0))^K  b_{s,(\ell_1,\ldots,\ell_{m_1-1}), (N_1,\ldots,N_{m_1-1})}^{(1,\ldots,1)}(\mathbf{1}) \\
& \hspace{2em} \times \prod_{i=1}^K \frac{1}{\ell_{m_i}} \sum_{a \in \Z^d}  b_{a, (\ell_{m_i+1}, \ldots,\ell_{m_{i+1}-1}),(N_{m_i+1}, \ldots,N_{m_{i+1}-1})}^{(1, \ldots ,1)}(h_n(\cdot, a)^{N_{m_i}}) \, .
\end{align*}
We sum over $\boldsymbol{\ell} \in E_{q,n}$, and get:
\begin{align}
\left| A_{n,q, \mathbf{N}}^{\boldsymbol{\varepsilon}} \right|
& \leq \sum_{\boldsymbol{\ell} \in \{1, \ldots, n\}^q} \left| b_{0,\boldsymbol{\ell}, \mathbf{N}}^{\boldsymbol{\varepsilon}}(\mathbf{1}) \right| \nonumber\\
& \leq (\Phi(0))^K \left( \sum_{\substack{(\ell_1,\ldots,\ell_{m_1-1}) \\ \in \{1, \ldots, n\}^{m_1-1}}} \left| b_{0,(\ell_1,\ldots,\ell_{m_1-1}), (N_1,\ldots,N_{m_1-1})}^{(1,\ldots,1)}(\mathbf{1}) \right| \right) \label{majoA1}\\
& \hspace{2em} \times \prod_{i=1}^K \left( \sum_{\substack{(\ell_{m_i}, \ldots,\ell_{m_{i+1}-1}) \\ \in \{1, \ldots, n\}^{m_{i+1}-m_i}}} \frac{1}{\ell_{m_i}} \left| \sum_{a \in \Z^d}  b_{a, (\ell_{m_i+1}, \ldots,\ell_{m_{i+1}-1}),(N_{m_i+1}, \ldots,N_{m_{i+1}-1})}^{(1, \ldots ,1)}(h_n(\cdot, a)^{N_{m_i}}) \right|\right). \label{majoA2}
\end{align}

Now, let us bound the terms~\eqref{majoA1} and~\eqref{majoA2}; our goal is to find conditions on the combinatorial data 
ensuring that these terms are negligible. Starting with~\eqref{majoA1}, 
\begin{align*}
& \norm{b_{0,(\ell_1,\ldots,\ell_{m_1-1}), (N_1,\ldots,N_{m_1-1})}^{(1,\ldots,1)}}{\Bcal^*} \\
& \hspace{4em} = O\left(\frac{(\ell_1\ldots\ell_{m_1-1})^{-1-\eta}}{\afrak_n^{\frac {N_1+\ldots+N_{m_1-1}}2}} \sum_{a_1,\ldots,a_{m_1-1}\in\Z^2}\prod_{j=1}^{m_1-1}(1+|a_j-a_{j-1}|^{2\eta}
)\left(\mathbf{1}_0(a_j)+\norm{ f(\cdot,a_j)\times\cdot}{\Lcal(\Bcal,\Bcal)}^{N_j}\right)\right) \\
& \hspace{4em} = O\left(\frac{(\ell_1\ldots\ell_{m_1-1})^{-1-\eta}}{\afrak_n^{\frac {N_1+\ldots+N_{m_1-1}}2}} \prod_{j=1}^{m_1-1} \sum_{a\in\Z^2}(1+|a|^{4\eta}
)\left(\mathbf{1}_0(a)+\norm{f(\cdot,a)\times\cdot}{\Lcal(\Bcal,\Bcal)}^{N_j}\right)\right) \, .
\end{align*}
Therefore, \eqref{majoA1} is bounded, and converges to $0$ as $n \to + \infty$ if $m_1 \neq 1$. 
Focusing now on~\eqref{majoA2}, 
\begin{align*}
& \norm{\sum_{a\in \Z^d}  b_{a,(\ell_{m_i+1},\ldots,\ell_{m_{i+1}-1}), (N_{m_i+1},\ldots,N_{m_{i+1}-1})}^{(1,\ldots,1)}(h_n(\cdot,a)^{N_{m_i}})}{\Bcal^*} \\
& \hspace{4em} = O \left(\frac{(\ell_{m_i+1}\ldots\ell_{m_{i+1}-1})^{-1-\eta}}{\afrak_n^{\frac {N_{m_i}+\ldots+N_{m_{i+1}-1}}2}} \sum_{a_0,\ldots,a_{m_{i+1}-m_i-1}\in\Z^2}\prod_{j=1}^{m_{i+1}-m_i-1}(1+|a_j-a_{j-1}|^{2\eta
})\right.\\
& \hspace{6em} \left. \times \prod_{k=0}^{m_{i+1}-m_i-1}\left(\mathbf{1}_0({a_k})+\norm{f(\cdot,a_k)\times\cdot}{\Lcal(\Bcal,\Bcal)}^{N_{m_i+k}}\right)\right)\\
& \hspace{4em} = O\left(\frac{(\ell_{m_i+1}\ldots\ell_{m_{i+1}-1})^{-1-\eta}}{\afrak_n^{\frac {N_{m_i}+\ldots+N_{m_{i+1}-1}}2}} \prod_{j=m_i}^{m_{i+1}-1}\sum_{a\in\Z^2}(1+|a|^{4\eta
})\left(\mathbf{1}_0(a)+\norm{f(\cdot,a)\times\cdot}{\Lcal(\Bcal,\Bcal)}^{N_j}\right)\right)\, .
\end{align*}
Therefore the $i$-th term appearing in~\eqref{majoA2} is in 
\begin{equation}
 \label{eq:ControleNmi}
 O\left( \left( \sum_{\ell_{m_i} \in \{1, \ldots, n\}} \frac{1}{\ell_{m_i}} \right) \afrak_n^{-\frac {N_{m_i}+\ldots+N_{m_{i+1}-1}}2}\right)
 = O\left( \afrak_n^{1-\frac {N_{m_i}+\ldots+N_{m_{i+1}-1}}{2}}\right) \, .
\end{equation}
If $m_{i+1} = m_i+1$, then the $i$-th term in~\eqref{majoA2} is bounded by Point~$(II)$ of Properties~\ref{rmk:proprietesbn}. 
If $m_{i+1} \geq m_i+2$, then $N_{m_i}+\ldots+N_{m_{i+1}-1} \geq 2$, so the $i$-th term in~\eqref{majoA2} is still bounded 
by Equation~\eqref{eq:ControleNmi}. 
Furthermore, if $N_{m_i}+\ldots+N_{m_{i+1}-1} \geq 3$ for some $i$, 
then the $i$-th term converges to $0$, and thus $A_{n,q, \mathbf{N}}^{\boldsymbol{\varepsilon}}$ 
converges to $0$ as $n \to + \infty$ by Equation~\eqref{eq:ControleNmi}. Hence, $A_{n,q, \mathbf{N}}^{\boldsymbol{\varepsilon}}$ may not 
converge to $0$ only if $m_1 = 1$ and $N_{m_i}+\ldots+N_{m_{i+1}-1} \leq 2$ for all $i$. To 
sum up, if:
\begin{equation}\label{dominant}
m_1=1 \mbox{ and for all } 1 \leq i \leq K, \text{ either }  \left\{ 
\begin{array}{rclcrcl}
      m_{i+1} & = & m_i+1 & \text{ and } & N_{m_i} & = & 1 \\
      m_{i+1} & = & m_i+1 & \text{ and } & N_{m_i} & = & 2 \\
      m_{i+1} & = & m_i+2 & \text{ and } & N_{m_i} & = & N_{m_i+1}=1 
\end{array}\right. ,
\end{equation}
then $(A_{n,q, \mathbf{N}}^{\boldsymbol{\varepsilon}})_{n \geq 0}$ is bounded; otherwise, 
$(A_{n,q, \mathbf{N}}^{\boldsymbol{\varepsilon}})_{n \geq 0}$ converges to $0$. In particular, 
$\Ebb_\mu \left[\Zcal_n (h_n)^m \right]$ is bounded, and we only need to take into account 
the data $(\mathbf{N},\boldsymbol{\varepsilon})$ satisfying Condition~\eqref{dominant}, which can be rewritten:
\begin{itemize}
\item $N_i \in\{1, 2\}$; 
\item $N_i=2\ \Rightarrow\ \varepsilon_i=0$; 
\item $\varepsilon_i=1\ \Rightarrow\ i\ge 2,\ N_i=N_{i-1}=1,\  \varepsilon_{i-1}=0$.
\end{itemize}
We shall call such couples $(\mathbf{N},\boldsymbol{\varepsilon})$ \textit{admissible}.
Given $1 \leq q \leq m$, let $\Gcal(q)$ be the set of admissible 
$(\mathbf{N},\boldsymbol{\varepsilon})=((N_1,\ldots,N_q),(\varepsilon_1,\ldots,\varepsilon_q)) \in\{1,2\}^q\times\{0,1\}^q$. 
For $(\mathbf{N},\boldsymbol{\varepsilon})\in\Gcal(q)$, we set:
\begin{itemize}
 \item $\Jcal_2 := \{i\in\{1,...,q\}\ :\ \varepsilon_i=0, N_i=2\}$;
 \item $\Jcal_1 := \{i\in\{1,...,q\}\ :\ (\varepsilon_i, \varepsilon_{i+1})= (0,0), N_i=1 \}$;
 \item $\Jcal_{1,1} := \{i\in\{1,...,q-1\}\ :\ (\varepsilon_i, \varepsilon_{i+1})= (0,1), (N_i, N_{i+1}) = (1,1) \}$\, ,
\end{itemize}
recalling the convention $\varepsilon_{q+1}=0$. 

\smallskip

For instance, the data $\mathbf{N} = (1,1,1,2,1,1,2,2,1,1)$, $\boldsymbol{\varepsilon} = (0,0,1,0,0,1,0,0,0,0)$ 
is admissible, as it can be decomposed in blocs as follows:
\begin{equation*}
 \begin{array}{|c|c|cc|c|cc|c|c|c|c|}
  \hline
  \mathbf{N} & 1 & 1 & 1 & 2 & 1 & 1 & 2 & 2 & 1 & 1 \\
  \hline
  \boldsymbol{\varepsilon} & 0 & 0 & 1 & 0 & 0 & 1 & 0 & 0 & 0 & 0 \\
  \hline
 \end{array}
 .
\end{equation*}
For this example, $\Jcal_2 = \{4, 7, 8\}$, $\Jcal_1 = \{1, 9, 10\}$ and $\Jcal_{1,1} = \{2, 5\}$.

\smallskip

Then:
\begin{equation*}
 b_{0;\boldsymbol{\ell},\mathbf{N}}^{\boldsymbol{\varepsilon} }(\mathbf{1})
 = \left(\prod_{i \in \Jcal_2} b_{(\ell_i),(2)}^{(0)} (\mathbf{1}) \right) 
   \left(\prod_{i\in\Jcal_1} b_{(\ell_i),(1)}^{(0)} (\mathbf{1}) \right)
   \left(\prod_{i\in\Jcal_{1,1}} b_{(\ell_i,\ell_{i+1}),(1,1)}^{(0,1)}(\mathbf{1})\right).
\end{equation*}
Note that $m = 2|\Jcal_2|+2|\Jcal_{1,1}| + |\Jcal_1|$ while $q = |\Jcal_2|+2|\Jcal_{1,1}| + |\Jcal_1|$; 
in particular, $|\Jcal_2| = m-q$. 
Due to Point~$(II)$ in Properties~\ref{rmk:proprietesbn} and Lemma~\ref{lem:Borne01}, we obtain:
\begin{align}\label{FORMULEA}
A_{n;q;\mathbf{N}}^{\boldsymbol\varepsilon}
& = \sum_{\boldsymbol{\ell}\in E_{q,n}} \left(\prod_{i \in \Jcal_2} \frac{\Phi(0)\beta^2\sum_{a\in\Z^2} \Ebb_{\mu}[f(\cdot,a)^2]}{ {\ell_i}\afrak_n} \right)\, \left( \prod_{i\in\Jcal_1} \frac{\Phi(0) \alpha}{\ell_i \afrak_n} \right) \nonumber \\
& \hspace{2em} \times \left( \prod_{i\in\Jcal_{1,1}} \frac{\Phi(0)\beta^2}{\ell_i \afrak_n} \int_{\widetilde{A}} f\circ\widetilde{T}^{\ell_i} \cdot f\dd\widetilde{\mu} \right)+o(1) \nonumber \\
& = \left(\frac{\Phi(0)}{\afrak_n}\right)^{\frac{m+|\Jcal_1|}{2}} \beta^{m-|\Jcal_1|} \alpha^{|\Jcal_1|} \sum_{\boldsymbol{\ell}\in  E_{q,n}} \left( \prod_{i \in \Jcal_2} \frac{\sum_{a\in\Z^2} \Ebb_{\mu}[f(\cdot,a)^2]}{{\ell_i}} \right) \nonumber \\
& \hspace{2em} \times \left( \prod_{i\in\Jcal_1} \frac{1}{\ell_i} \right) \left( \prod_{i\in\Jcal_{1,1}} \frac{1}{\ell_i} \int_{\widetilde{A}} f\circ\widetilde{T}^{\ell_i} \cdot f\dd\widetilde{\mu} \right) + o(1) \nonumber \\
& = \left(\frac{\Phi(0)}{\afrak_n}\right)^{\frac{m+|\Jcal_1|}{2}} \beta^{m-|\Jcal_1|} \alpha^{|\Jcal_1|} \left(\int_{\widetilde{A}} f^2 \dd \widetilde{\mu} \right)^{|\Jcal_2|} \nonumber \\
& \hspace{2em} \times \sum_{\ell_1,\ldots,\ell_{|\Jcal_{1,1}|}\ge 1}\left[\left( \prod_{i\in\Jcal_{1,1}} \int_{\widetilde{A}} f\circ\widetilde{T}^{\ell_i} \cdot f\dd\widetilde{\mu} \right) \sum_{\boldsymbol{\ell'}\in E_{q-|\Jcal_{1,1}|,n-\sum_{i=1}^{|\Jcal_{1,1}|}\ell_i}}\prod_{i=1}^{q-|\Jcal_{1,1}|} \frac{1}{\ell'_i} \right] + o(1)
\end{align}
Due to~\cite[Lemma 3.7]{PeneThomine:2019}, for all $\ell_1,\ldots,\ell_{|\Jcal_{1,1}|} \geq 1$, as $n \to +\infty$,
\begin{equation*}
 \sum_{\boldsymbol{\ell'}\in E_{q-|\Jcal_{1,1}|,n-\sum_{i=1}^{|\Jcal_{1,1}|}\ell_i}}\prod_{i=1}^{q-|\Jcal_{1,1}|}{\ell'_i}^{-1} 
 \sim \afrak_n^{q-|\Jcal_{1,1}|} 
 = \afrak_n^{\frac{m+|\Jcal_1|}{2}} \, .
\end{equation*}
%
Hence, by the dominated convergence theorem, 
\begin{equation*}
A_{n;q;\mathbf{N}}^{\boldsymbol\varepsilon} 
= \Phi(0)^{\frac{m+|\Jcal_1|}{2}} \beta^{m-|\Jcal_1|} \alpha^{|\Jcal_1|} \left(\int_{\widetilde{A}} f^2 \dd \widetilde{\mu} \right)^{|\Jcal_2|}
\left( \sum_{\ell \geq 1}\int_{\widetilde{A}} f\circ\widetilde{T}^\ell \cdot f\dd\widetilde{\mu}\right)^{|\Jcal_{1,1}|} + o(1)\, .
\end{equation*}
If $(\mathbf{N},\boldsymbol{\varepsilon})$ is admissible, then 
$c_{\mathbf{N}}=2^{-|\Jcal_2|} m!$. Applying Equation~\eqref{momentAn}, we obtain
\begin{equation*}
\Ebb_\mu \left[ \Zcal_n (h_n)^m \right]
= \sum_{q=1}^m \sum_{(\mathbf{N},\boldsymbol{\varepsilon}) \in \Gcal(q)} c_{\mathbf{N}} A_{n;q;\mathbf{N}}^{\boldsymbol\varepsilon} + o(1)
= m! \sum_{q=1}^m 2^{-|\Jcal_2|} \sum_{(\mathbf{N},\boldsymbol{\varepsilon}) \in \Gcal(q)} A_{n;q;\mathbf{N}}^{\boldsymbol\varepsilon} + o(1) \, .
\end{equation*}
Let $r := 2|\Jcal_{1,1}|+2|\Jcal_2|$ and $s := |\Jcal_2|$. Note that $r$ is even, $s \leq r/2$ and $r \leq m$. 
We split the later sum depending on the value of $r$, and then depending on the value of $s = m-q$. Note that, 
once $r$ and $s$ are fixed, the number of admissible $(\mathbf{N},\boldsymbol{\varepsilon})$ 
such that $r = 2|\Jcal_{1,1}|+2|\Jcal_2|$ and $s = |\Jcal_2|$ is $\binom{m-r/2}{r/2} \cdot \binom{r/2}{s}$. We get:
\begin{align*}
\lim_{n \to + \infty} & \Ebb_\mu \left[ \Zcal_n (h_n)^m \right] \\
& = m! \sum_{\substack{0 \leq r \leq m \\ r \in 2\Z}} \sum_{s=0}^{r/2} 2^{-s} \sum_{(\mathbf{N},\boldsymbol{\varepsilon}) \in \Gcal(m-s)} \Phi(0)^{m-r/2} \beta^r \alpha^{m-r} \left(\int_{\widetilde{A}} f^2 \dd \widetilde{\mu} \right)^s
\left( \sum_{\ell \geq 1}\int_{\widetilde{A}} f\circ\widetilde{T}^\ell \cdot f\dd\widetilde{\mu}\right)^{r/2-s} \\
& = m! \sum_{\substack{0 \leq r \leq m \\ r \in 2\Z}} \binom{m-r/2}{r/2} \Phi(0)^{m-r/2} \beta^r \alpha^{m-r} \sum_{s=0}^{r/2} \binom{r/2}{s} 2^{-s} \left(\int_{\widetilde{A}} f^2 \dd \widetilde{\mu} \right)^s
\left( \sum_{\ell \geq 1}\int_{\widetilde{A}} f\circ\widetilde{T}^\ell \cdot f\dd\widetilde{\mu}\right)^{r/2-s}  \\
& = m! \sum_{\substack{0 \leq r \leq m \\ r \in 2\Z}} \binom{m-r/2}{r/2} \Phi(0)^{m-r/2} \beta^r \alpha^{m-r} \left( \frac{\widetilde{\sigma}^2 (f)}{2} \right)^{r/2} \\
& = \sum_{\substack{0 \leq r \leq m \\ r \in 2\Z}} \binom{m}{r} \alpha^{m-r} \left[(m-r/2)! \Phi (0)^{m-r/2} \right] \frac{r!}{2^{r/2} (r/2)!} \left( \beta \widetilde{\sigma} (f) \right)^r \\
& = \sum_{r=0}^m \binom{m}{r} \alpha^{m-r} \Ebb \left[ (\Phi (0)\Ecal)^{m-r/2} \right] \Ebb \left[ \left( \beta \widetilde{\sigma} (f) \Ncal \right)^r \right] \\
& = \Ebb \left[ \left( \alpha \Phi (0)\Ecal + \beta \sqrt{\Phi(0)
\Ecal
} \widetilde{\sigma} (f) \Ncal \right)^m \right]\, ,
\end{align*}
where $\Ecal$ has a standard exponential distribution, $\Ncal$ a standard Gaussian distribution, and $\Ecal$, $\Ncal$ are independent. 
This finishes the proof of Theorem~\ref{thm:gene} for $t=1$.
\end{proof}

\subsection{Functional convergence}

We finish the proof of Theorem~\ref{thm:gene}, by extending the distributional limit theorem (for $t=1$) to 
a functional limit theorem. This is made easier by the fact that, in dimension $2$, the local time at step $n$ 
is of the order of $\ln (n)$, which has slow variation.

\begin{proof}[End of the proof of Theorem~\ref{thm:gene}]

A crucial observation is given by the next lemma:

\begin{lemma}
\label{controleMoments0}

Under Hypothesis~\ref{hyp:0}, there exists $C>0$ such that
for every $f : \widetilde{A} \to \R$, for every $0<T_1<T_2$ and every $n \geq T_1^{-1}$,
\begin{equation}
\norm{\sup_{t\in (T_1,T_2)}\left| \widetilde S_{nt}f-\widetilde S_{nT_1}f\right|}{\Lbb^1(\widetilde T_*(\mu\otimes\delta_0))}
\le C\sum_{a \in \Z^2} \norm{f (\cdot, a)}{\Lbb^{p^*} (A, \mu)}\log\frac{\lceil nT_2\rceil }{\lfloor nT_1\rfloor} \,.
\end{equation}
\end{lemma}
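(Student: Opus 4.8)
The statement bounds the $L^1$ norm of a supremum of increments of the Birkhoff sum over a time window $(nT_1, nT_2)$. The natural approach is to pass to the induced picture: under $\widetilde T_*(\mu\otimes\delta_0)$, $\widetilde S_{nt}f$ evaluated along $\widetilde T$-orbits starting from the cell $a=0$ becomes (up to the linearization correction) the sum $\mathcal Z_{\lfloor nt\rfloor} f = \sum_{k=1}^{\lfloor nt\rfloor} f(T^k\cdot, S_kF(\cdot))$, exactly as in the proof for $t=1$. So the quantity to control is $\sup_{t\in(T_1,T_2)}|\mathcal Z_{\lfloor nt\rfloor}f - \mathcal Z_{\lfloor nT_1\rfloor}f|$ in $L^1(\mu)$, plus the negligible linearization terms $(t-\lfloor t\rfloor)f\circ\widetilde T^{\lfloor t\rfloor}$.

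\textbf{Step 1: reduce the supremum to a sum of absolute values.} Since $\lfloor nt\rfloor$ ranges over the integers in $[\lfloor nT_1\rfloor, \lceil nT_2\rceil]$ as $t$ ranges over $(T_1,T_2)$, I would bound
\[
\sup_{t\in(T_1,T_2)}\bigl|\mathcal Z_{\lfloor nt\rfloor}f - \mathcal Z_{\lfloor nT_1\rfloor}f\bigr|
\le \sum_{k=\lfloor nT_1\rfloor+1}^{\lceil nT_2\rceil} \bigl|f(T^k\cdot, S_kF(\cdot))\bigr|,
\]
and then estimate the $L^1(\mu)$ norm of the right-hand side term by term.

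\textbf{Step 2: estimate $\Ebb_\mu\bigl[|f(T^k\cdot,S_kF(\cdot))|\bigr]$.} Writing this as $\sum_{a\in\Z^2}\Ebb_\mu\bigl[|f(T^k\cdot,a)|\,\mathbf 1_{\{S_kF=a\}}\bigr]$ and using duality, each term equals $\Ebb_\mu\bigl[|f(\cdot,a)|\cdot P^k(\mathbf 1_{\{S_kF=a\}}\mathbf 1)\bigr]$. By H\"older with exponents $p,p^*$, this is at most $\norm{f(\cdot,a)}{\Lbb^{p^*}}\cdot\norm{P^k(\mathbf 1_{\{S_kF=a\}}\mathbf 1)}{\Lbb^p}$, and the last factor is $\norm{Q_{k,a}\mathbf 1}{\Lbb^p}$. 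Using the embedding $\Bcal\hookrightarrow\Lbb^p$ (or the continuous extension of $\Ebb_\mu$ when $p=1$) together with Assumption~\eqref{hyp:0}, one gets $\norm{Q_{k,a}\mathbf 1}{\Lbb^p}\le C\bigl(\Phi(a/\sqrt k)/k + O(k^{-1-\eta})\bigr)\le C'/k$, uniformly in $a$. This yields
\[
\Ebb_\mu\bigl[|f(T^k\cdot,S_kF(\cdot))|\bigr]\le \frac{C}{k}\sum_{a\in\Z^2}\norm{f(\cdot,a)}{\Lbb^{p^*}}.
\]

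\textbf{Step 3: sum over $k$ and handle the linearization remainder.} Summing $C/k$ over $k$ from $\lfloor nT_1\rfloor+1$ to $\lceil nT_2\rceil$ gives $C\log\bigl(\lceil nT_2\rceil/\lfloor nT_1\rfloor\bigr)$ (comparing the harmonic sum to a logarithm, using $n\ge T_1^{-1}$ so $\lfloor nT_1\rfloor\ge 1$). The linearization terms $(t-\lfloor t\rfloor)f\circ\widetilde T^{\lfloor t\rfloor}$ contribute, after taking sup over $t$, at most $\sup_{k}|f(T^k\cdot,S_kF)|\le\sum_k|f(T^k\cdot,S_kF)|$ over the same range, already absorbed; or one simply notes $\widetilde S_{nt}f - \mathcal Z_{\lfloor nt\rfloor}f$ is dominated by one extra term $|f\circ\widetilde T^{\lfloor nt\rfloor}|$, handled identically. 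Collecting the constants finishes the proof.

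\textbf{Main obstacle.} The only delicate point is Step 2: one must make sure the bound $\norm{Q_{k,a}\mathbf 1}{\Lbb^p}=O(1/k)$ holds \emph{uniformly in $a\in\Z^2$} and in the correct norm dictated by the dichotomy on $p$. For $p>1$ this is immediate from $\Bcal\hookrightarrow\Lbb^p$ applied to \eqref{hyp:0} (since $\Phi\le\Phi(0)$ is bounded); for $p=1$, $\norm{Q_{k,a}\mathbf 1}{\Lbb^1}=\Ebb_\mu\bigl[P^k(\mathbf 1_{\{S_kF=a\}}\mathbf 1)\bigr]=\mu(S_kF=a)$, and one applies the continuous extension of $\Ebb_\mu$ to \eqref{hyp:0} to get $\mu(S_kF=a)=\Phi(a/\sqrt k)/k+O(k^{-1-\eta})=O(1/k)$ uniformly. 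Once this uniform $O(1/k)$ is in hand, everything else is the routine harmonic-sum estimate.
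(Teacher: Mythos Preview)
Your proposal is correct and follows essentially the same route as the paper's proof: bound the supremum by the sum of $|f(T^k\cdot,S_kF(\cdot))|$ over the integer range $\lfloor nT_1\rfloor < k \le \lceil nT_2\rceil$, use H\"older/duality to factor each term as $\norm{f(\cdot,a)}{\Lbb^{p^*}}$ times a quantity controlled by $Q_{k,a}\mathbf{1}$, invoke Hypothesis~\eqref{hyp:0} to get the uniform $O(1/k)$ bound, and sum the harmonic series. The only cosmetic difference is that the paper treats the cases $p=1$ and $p>1$ separately from the start (for $p=1$ it dominates $|f|$ by the nonnegative $h_0(x,a)=\norm{f(\cdot,a)}{\infty}$ so the sup reduces to a single telescoped difference), whereas you unify the reduction and split on $p$ only when bounding $\norm{Q_{k,a}\mathbf{1}}{}$; both arrive at the same estimate.
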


\begin{proof}

Assume first that $p=1$. Let $c_a:=\norm{f(\cdot,a)}{\infty}$ and set $h_0(x,a):=c_a$. Using Hypothesis~\ref{hyp:0}, 
there exists a constant $C>0$ such that:
\begin{align*}
 \norm{\widetilde S_k h_0-\widetilde S_j h_0}{\Lbb^1 (\widetilde T_*(\mu\otimes\delta_0))}
 & \leq \sum_{a\in\Z^2}c_a \sum_{m=j+1}^k \mu(S_m F=a) \\
 & \leq \sum_{a\in\Z^2}c_a \sum_{m=j+1}^k \Ebb_\mu\left[Q_{m,a} (\mathbf{1}) \right]
 \leq C \sum_{a\in\Z^2}c_a \sum_{m=j+1}^k \frac{1}{m} \, .
\end{align*}
Since $|f|\leq h_0$, for every $n \geq T_1^{-1}$.
\begin{align*}
 \norm{\sup_{t\in (T_1,T_2)}\left| \widetilde S_{nt}f-\widetilde S_{nT_1}f\right|}{\Lbb^1(\widetilde T_*(\mu\otimes\delta_0))} 
 & \leq \norm{ \widetilde S_{\lceil nT_2\rceil}h_0-\widetilde S_{\lfloor nT_1\rfloor}h_0 }{\Lbb^1(\mu\otimes\delta_0)} \\
 & \leq C\sum_{a\in\Z^2}c_a\log\frac{\lceil nT_2\rceil }{\lfloor nT_1\rfloor}\, .
\end{align*}

When $p>1$, using again Hypothesis~\ref{hyp:0}, we get:
\begin{align*}
\norm{\sup_{t\in (T_1,T_2)}\left|\widetilde{S}_{nt} f-\widetilde{S}_{nT_1} f\right|}{\Lbb^1(\widetilde T_*(\mu\otimes\delta_0))} 
& \leq \sum_{a\in\Z^2} \sum_{\ell=\lfloor nT_1\rfloor}^{\lceil nT_2\rceil} \norm{f (\cdot, a) P^\ell (\mathbf{1}_{S_\ell F = a}) }{\Lbb^1 (A, \mu)} \\
& \leq C' \sum_{a\in\Z^2} \norm{f (\cdot, a)}{\Lbb^{p^*} (A, \mu)} \sum_{\ell=\lfloor nT_1\rfloor}^{\lceil nT_2\rceil} \norm{Q_{\ell,a} (\mathbf{1})}{\Bcal} \\
& \leq C \left( \sum_{a\in\Z^2} \norm{f (\cdot, a)}{\Lbb^{p^*} (A, \mu)} \right) \sum_{\ell=\lfloor nT_1\rfloor}^{\lceil nT_2\rceil} \frac{1}{\ell} \qedhere
\end{align*}
\end{proof}
%
%
%
Lemma~\ref{controleMoments0} implies that
\begin{equation*}
 \left( \sup_{t\in [T_1,T_2]}\left| \frac{\widetilde{S}_{nt}g_0-\widetilde{S}_{nT_1}g_0}{\ln (n)} \right|, \, 
 \sup_{t\in [T_1,T_2]}\left| \frac{\widetilde{S}_{nt}f-\widetilde{S}_{nT_1}f}{\sqrt{\ln (n)}} \right| \right)
\end{equation*}
converges in probability to $(0,0)$ with respect to $\mu\otimes\delta_0$. Hence, as $n$ goes to $+\infty$,
\begin{equation}\label{cv0}
\left(\frac{\widetilde{S}_{nt} g_0}{\ln (n)}, \, \frac{\widetilde{S}_{nt} f}{\sqrt{\ln (n)}}\right)_{t\in[T_1,T_2]}
\longrightarrow \left(\Phi(0)\Ecal, \, \widetilde{\sigma} (f) \sqrt{\Phi(0)\Ecal}\, \Ncal\right)_{t\in[T_1,T_2]}, 
\end{equation}
where the convergence is in distribution in $\Ccal([T_1,T_2], \R)$ with respect to $\mu\otimes\delta_0$.

\smallskip

Hence, the conclusion of Theorem~\ref{thm:gene} holds for $f$ and $g_0$, and where the convergence in distribution 
is with respect to $\mu\otimes\delta_0$. By~\cite[Theorem~1]{Zweimuller:2007}, the convergence in distribution 
actually holds with respect to any absolutely continuous probability measure. Finally, let us take 
any $g \in \Lbb^1 (\widetilde{A}, \widetilde{\mu})$. Since the system $(\widetilde{A},\widetilde{\mu},\widetilde{T})$ 
is conservative and ergodic, Hopf's ergodic theorem ensures that, $\widetilde{\mu}$-almost everywhere, 
$\widetilde{S}_t g \sim \int_{\widetilde{A}}g\dd\widetilde{\mu} \cdot (\widetilde{S}_t g_0)$, 
so the convergence in distribution of Equation~\eqref{cv0} also holds for $g$. 
\end{proof}


\section{Limit theorem for flows}\label{sec:flow}

We now focus on the results for suspension flows over maps with good spectral properties.

\subsection{General theorem for suspension semiflows}

We begin by deducing Theorem~\ref{thm:geneflot} from Theorem~\ref{thm:gene}.

\begin{proof}[Proof of Theorem~\ref{thm:geneflot}]

Let $\phi$ be as in the hypotheses of Theorem~\ref{thm:geneflot}. 
Take $\psi (x, a, u) := \tau(x)^{-1} \mathbf{1}_0 (a)$ 
and $\mu_0 := \tau^{-1} (x) \dd \mu (x) \otimes \delta_0 (a) \otimes \dd u \in \Pcal (\widetilde{M})$. 
Let $0<s_1<s_2$. 

\medskip
\textbf{From the transformation to the flow}
\smallskip

Let $\theta : \widetilde{\Mcal} \to \R$. Recall that we defined $G(\theta) (x,a)=\int_0^{\tau(x)} 
\theta
(x,a,t)\dd t$. 
Assume that:
\begin{equation*}
\sum_{a \in \Z^2} \norm{G(|\theta|) (\cdot, a)}{\Lbb^{p^*} (A, \mu)} 
< + \infty,
\end{equation*}
a condition satisfied by both the functions $\phi$ and $\psi$.

\smallskip

Recall that we set $n_t(x)=\max\{n\geq 0\, :\, S_n\tau(x)\leq t\}$. 
let $N_t(x):=n_t(x)+\frac{t-\sum_{k=0}^{n_t(x)-1}\tau\circ T^k(x)}{\tau(T^{n_t(x)}(x))}$, 
so that $\widetilde{S}_{N_t} \tau = t$. Then, for all $(x,a,u)\in\widetilde{\Mcal}$,
\begin{equation}
 \label{eq:BorneThetaMoyennise}
 \left| \widetilde{S}_t \theta (x,a,u) - \widetilde{S}_{N_t(x)} G(\theta) (x,a)\right|
 \leq G(|\theta|) (x,a) + \widetilde{S}_{n_{t+u} (x)-n_t(x)+1} G_u  (|\theta|) (\widetilde{T}^{n_t(x)}(x,a))\, .
\end{equation}

It is straightforward that $G(|\theta|) (x,a)/\sqrt{\ln (t)} \to 0$ as $t\to +\infty$. 
We need to control the last term in Equation~\eqref{eq:BorneThetaMoyennise}.

\smallskip

Since $(\widetilde{A},\widetilde{\mu},\widetilde{T})$ is ergodic, so is $(A,\mu,T)$, 
and thus, by Birkhoff's ergodic theorem, $\lim_{n\to +\infty}n^{-1} S_n \tau = \int_A\tau\dd\mu$ 
almost surely for $\mu$. Since $S_{n_t} \tau \leq t< S_{n_t+1} \tau$, 
we conclude that, $\mu$-almost surely, $n_t \sim \frac{t}{\int_A\tau\dd\mu}$ as $t$ goes to $+\infty$. 
Therefore, for $\widetilde{\nu}$-almost every $(x,a,u)\in\widetilde{\Mcal}$, there exists $t_0=t_0(x,u) \geq 0$ 
such that 
\begin{equation*}
 \frac{t}{2\int_A\tau\dd\mu} 
 \leq n_t (x) 
 \leq n_{t+u} (x)+1
 \leq \frac{2t}{\int_A\tau\dd\mu}
\end{equation*}
for every $t \geq t_0$. Then, on $\{t_0 \leq s_1 t \}$,
\begin{equation*}
 \sup_{s\in [s_1,s_2]} \widetilde{S}_{n_{ts+u} (x)-n_{ts} (x)+1} G(|\theta|) (\widetilde{T}^{n_{ts}(x)}(x,a))
 \leq \sup_{\frac {t s_1}{2\int_A\tau\dd\mu} \leq n \leq n+m \leq \frac{2 t s_2}{\int_A\tau\dd\mu}} \widetilde{S}_m G(|\theta|) (\widetilde{T}^n (x,a)). 
\end{equation*}
By Lemma~\ref{controleMoments0},
\begin{equation*}
 \norm{\mathbf{1}_{\{t_0 \leq s_1 t \}} \sup_{s\in [s_1,s_2]} \widetilde{S}_{n_{ts+u} (x)-n_{ts} (x)+1} G(|\theta|) (\widetilde{T}^{n_{ts}(x)}(x,a))}{\Lbb^1 (\widetilde{M}, \mu_0)} 
 \leq C \left( \sum_{a \in \Z^2} \norm{G(|\theta|) (\cdot, a)}{\Lbb^{p^*} (A, \mu)} \right) \ln \left( \frac{4 s_2}{s_1} \right).
\end{equation*}
Hence, the random variable 
\begin{equation*}
 \mathbf{1}_{\{t_0 \leq s_1 t \}} \frac{\sup_{s\in [s_1,s_2]}G
(|\theta|) (\widetilde{T}^{n_{ts+u}(x)}(x,a))}{\sqrt{\ln(t)}}
\end{equation*}
converges to $0$ in probability on $(\widetilde{M}, \mu_0)$, while the random variable 
\begin{equation*}
 \mathbf{1}_{\{t_0 > s_1 t \}} \frac{\sup_{s\in [s_1,s_2]} G
(|\theta|) (\widetilde{T}^{n_{ts+u}(x)}(x,a))}{\sqrt{\ln(t)}}
\end{equation*}
converges to $0$ almost surely on $(\widetilde{M}, \mu_0)$.

\smallskip

Applying the above discussion to the functions $\psi$ and $\phi$ respectively, the convergence in distribution in $\Ccal([s_1,s_2], \R)$, with respect to $\mu_0$, of 
\begin{equation*}
 \left(\frac{\widetilde{S}_{ts} \psi}{\ln (t)}, \,\frac{\widetilde{S}_{ts} \phi}{\sqrt{\ln (t)}}\right)_{s\in[s_1,s_2]}
\end{equation*}
is equivalent to the convergence in distribution in $\Ccal([s_1,s_2], \R)$, with respect to $\mu_0$, of 
\begin{equation*}
 (x,a,v) 
 \mapsto \left(\frac{\widetilde{S}_{N_{ts} (x)}G(\psi) (x,a)}{\ln (t)},\frac{\widetilde{S}_{N_{ts} (x)} G(\phi) (x,a)}{\sqrt{\ln (t)}}\right)_{s\in[s_1,s_2]}.
\end{equation*}
Since this last process depends only on $x$ (recall that $a = 0$ almost surely under $\mu_0$), 
this is equivalent to the convergence in distribution of the process 
\begin{equation*}
 x
 \mapsto \left(\frac{\widetilde{S}_{N_{ts} (x)} G(\psi) (x,0)}{\ln (t)},\frac{\widetilde{S}_{N_{ts} (x)}G(\phi)(x,0)}{\sqrt{\ln (t)}}\right)_{s\in[s_1,s_2]}
\end{equation*}
with respect to $\mu \otimes \delta_0$.

\medskip
\textbf{A time change}
\smallskip

It remains to prove the convergence in distribution of 
$\left(\frac{\widetilde{S}_{N_{ts}(\cdot)} G(\psi)(\cdot,0)}{\ln (t)}, \,\frac{\widetilde{S}_{N_{ts}(\cdot)} G(\phi)(\cdot,0)}{\sqrt{\ln (t)}}\right)_{s\in[s_1,s_2]}$ 
in $\Ccal([s_1,s_2], \R)$ with respect to $\mu$. The main idea is that 
this process is a time change (by $N_t$) of a discrete-time process, 
for which we can apply Theorem~\ref{thm:gene}.

\smallskip

We set $T_1:=\frac{s_1}{2 \int_A\tau\dd\mu}$ and $T_2:=\frac{2 s_2}{\int_A\tau\dd\mu}$.
By Theorem~\ref{thm:gene}, as $t$ goes to $+\infty$,
\begin{equation}
 \label{eq:TheoremeGeneApplicationFlot}
 \left(\frac{\widetilde{S}_{\lfloor t \rfloor s'} G(\psi)}{\ln (t)}, \, \frac{\widetilde{S}_{\lfloor t \rfloor s'} G(\phi)}{\sqrt{\ln (t)}}\right)_{s'\in[T_0,T_1]}
 \to\left(\int_{\widetilde{A}}g\dd\widetilde{\mu}\, \Phi(0)\Ecal, \, \widetilde \sigma (f) \sqrt{\Phi(0)\Ecal}\, \Ncal\right)_{s'\in[T_0,T_1]}\, ,
\end{equation}
where the convergence is in distribution in $\Ccal([T_1,T_2], \R)$ with respect to $\mu \otimes \delta_0$.

\smallskip

Since $N_t(\cdot)\sim \frac{t}{\int_A\tau\dd\mu}$ as $t\to +\infty$ almost surely for $\mu$,
\begin{equation*}
 \lim_{t \to + \infty} \sup_{s\in[s_1,s_2]}\left|\frac{N_{ts}(\cdot)}{\lfloor t\rfloor}- \frac {s}{\int_A\tau\dd\widetilde{\nu}}\right|
 \to 0
\end{equation*}
$\mu$-almost surely. Thus, still $\mu$-almost surely:
\begin{equation}
 \label{eq:ConvergenceHts}
 \lim_{t \to +\infty} \sup_{s\in[s_1,s_2]}\left|{h_{t,s}(\cdot)}- \frac{s}{\int_A\tau\dd\widetilde{\nu}}\right|
 \to 0\, ,
\end{equation}
with: 
\begin{equation*}
 h_{t,s}(x) = \left\{
 \begin{array}{rcl}
  T_1 & \text{ if } \frac{N_{ts}(x)}{\lfloor t\rfloor} \leq T_1 \\
  \frac{N_{ts}(x)}{\lfloor t\rfloor} & \text{ if } T_1 \leq \frac{N_{ts}(x)}{\lfloor t\rfloor} \leq T_2 \\
  T_2 & \text{ if } T_1 \leq \frac{N_{ts}(x)}{\lfloor t\rfloor}
 \end{array}
 \right. \, .
\end{equation*}
Observe that $h_{t,s}$ takes its values in $[T_1,T_2]$ and is continuous in $s$. 
Therefore, by composition of Equations~\eqref{eq:ConvergenceHts} and~\eqref{eq:TheoremeGeneApplicationFlot}, 
\begin{equation*}
 \left(\frac{\widetilde{S}_{N_{ts}} G(\psi)}{\ln (t)},\frac{\widetilde{S}_{N_{ts}} G(\phi)}{\sqrt{\ln (t)}}\right)_{s\in[s_1,s_2]}
 = \left(\frac{\widetilde{S}_{\lfloor t\rfloor h_{t,s}} G(\psi)}{\ln (t)},\frac{\widetilde{S}_{\lfloor t\rfloor h_{t,s}} G(\phi)}{\sqrt{\ln (t)}}\right)_{s\in[s_1,s_2]}
\end{equation*}
converges in distribution in $\Ccal([s_1,s_2],\R)$, as $t$ goes to $+\infty$, to  
$\left(\int_{\widetilde{A}}G(\psi)\dd\widetilde{\mu}\, \Phi(0)\Ecal, \, \widetilde \sigma (G(\phi)) \sqrt{\Phi(0)\Ecal}\, \Ncal\right)_{s\in[s_1,s_2]}$.

\smallskip

Moreover $\int_{\widetilde{A}} G(\theta) \dd \widetilde{\mu} = \int_{\widetilde{\Mcal}} \theta\dd \widetilde{\nu}$ for $\theta = \psi$, $\psi$ 
by definition of $\widetilde{\nu}$ and of $G(\theta)$, and
\begin{equation*}
 \widetilde{\sigma}^2 (G(\phi))
 =\int_{\widetilde{A}} G(\phi)^2 \dd \widetilde{\mu} + 2\sum_{k \geq 1} \int_{\widetilde{A}} G(\phi) \cdot G(\phi) \circ \widetilde{T}^k \dd \widetilde{\mu}.
\end{equation*}

This finishes the proof of Theorem~\ref{thm:geneflot} for $\psi (x, a, u) := \tau(x)^{-1} \mathbf{1}_0 (a)$ 
and $\mu_0 := \tau^{-1} (x) \dd \mu (x) \otimes \delta_0 (a) \otimes \dd u \in \Pcal (\widetilde{M})$. 
The general case follows from the same ideas as in the proof of Theorem~\ref{thm:gene}: 
\cite[Theorem~1]{Zweimuller:2007} extends the result to any probability measure absolutely continuous 
with respect to $\widetilde{\nu}$, while Hopf's ergodic theorem extends it to any $\psi \in \Lbb^1 (\widetilde{M}, \widetilde{\nu})$.

%

\end{proof}

\subsection{Proof for finite horizon Lorentz gases}

We now derive an application to Lorentz gases, that is Corollary~\ref{coroLorentz}, from 
Theorem~\ref{thm:geneflot}.

\begin{proof}[Proof of Corollary~\ref{coroLorentz}]

There exists $c>0$ such that $(\widetilde{\Mcal},c \widetilde{\nu},(\widetilde{Y}_t)_t)$ 
can be represented as a flow as in Theorem~\ref{thm:geneflot}, 
with $(A,\mu,T)$ the corresponding Sinai billiard and $\tau$ the length 
of the free flight until the next collision. Let us write $\Ccal_p$ for the set 
of configurations in $\widetilde{\Mcal}$ whose last reflection is on an obstacle 
corresponding to $A\times \{a\}$. Since $\tau$ is uniformly bounded, the condition 
on $\phi$ ensures that 
\begin{equation*}
 \sum_{a\in\Z^2} \norm{\phi_{|\Ccal_a}}{\eta} <+\infty\, .
\end{equation*}
Again here $\widetilde{T}$ is the billiard transformation in the $\Z^2$-periodic billiard domain.
Let $x$, $y$ in the same continuity domain of $\widetilde{T}$. Then there exists $K$ such that
\begin{align*}
\left| G(\phi) (x) - G(\phi) (y) \right| 
& = \left|\int_0^{\tau(x)} \phi(\widetilde{Y}_s(x,a))\dd s -\int_0^{\tau(y)}\phi(\widetilde{Y}_s(y,a))\dd s\right| \\
& \leq \int_0^{\min\{\tau(x), \tau(y)\}}  \left|\phi(\widetilde{Y}_s(x,a))-\phi(\widetilde{Y}_s(y,a))\right|\dd s + \left|\tau(x)-\tau(y)\right| \norm{\phi_{\Ccal_p}}{\infty} \\
& \leq \norm{\tau}{\infty} \norm{ \phi_{|\Ccal_p}}{\eta} \max_{0 \leq s \leq \min\{\tau(x), \tau(y)\}} d(\widetilde{Y}_s(x,a),\widetilde{Y}_s(y,a))^\eta \\
& \hspace{2em} +\norm{\tau}{\frac{1}{2}} \norm{\phi_{|\Ccal_a}}{\infty}d(x,y)^ {\frac{1}{2}}
\end{align*}
since $\tau$ is $\frac{1}{2}$-H\"older continuous on each continuity component of $T$.
Since $(x,s)\mapsto \widetilde{Y}_s(x,0)$ is differentiable on $\{(x,s)\in A\times[0,+\infty)\, :\, s\leq\tau(x)\}$, 
we conclude that $f:(x,a)\mapsto \int_0^{\tau(x)}\phi(x,a,s)\dd s$ satisfies the assumptions of
Corollary~\ref{cor:billtransfo} with $\eta$ replaced by $\min \{ \eta,1/2 \}$. 

\smallskip

The assumption on the system can be checked as in the proof of Corollary~\ref{cor:billtransfo}: 
\cite[Theorem 3.17]{DemersPeneZhang} ensures that Hypothesis~\ref{hyp:HHH} is satisfied with $p=1$, 
and \cite[Lemma 5.3]{DemersZhang:2014} ensures that $\norm{G(\phi)(\cdot,a)\times}{\Lcal(\Bcal,\Bcal)}\leq C\norm{G(\phi)(\cdot,a)}{\eta}$. 
All is left is to apply Theorem~\ref{thm:geneflot}.
\end{proof}

\section{Limit theorems via induction}\label{sec:induc}

We now prove Proposition~\ref{prop:BillardParInduit} using induced systems as in~\cite{Thomine:2013, Thomine:2015}. 
The strategy, in a nutshell, is as follows. In the present article, up to now, we worked with suspensions flows over an ergodic 
$\Z^2$-extension of a dynamical system $(A, \mu, T)$, where the extension was given by a jump function $F : A \to \Z^2$ and 
the roof function $\widetilde{\tau} : (x,a) \mapsto \tau(x)$. The system $(A, \mu, T)$ 
was a billiard map, and the suspension flow the Lorentz gas.

\smallskip

In~\cite{Thomine:2013, Thomine:2015}, the setting is very similar, with the 
difference that $(A, \mu, T)$ has to be a Gibbs-Markov map (see e.g.~\cite[~4.6]{Aaronson:1997} for an introduction to these systems, which are Markov maps with 
a big image property). Using the symbolic coding of Axiom A flows by Bowen~\cite{Bowen:1973}, 
a statement very close to that of Theorem~\ref{thm:geneflot} was obtained for 
geodesic flows in negative curvature~\cite[Proposition~6.12]{Thomine:2015}. 
The case of Sinai billiards is more complex, as one has to use Young 
towers~\cite{Young:1998} to make them fit the setting of Gibbs-Markov maps.

\subsection{Young towers and Lorentz gas}

To simplify our argument, we shall work with the discrete-time Lorentz gas
(i.e. $\Z^2$-periodic billiard system). 
In order to emphasize the parallel constructions, we keep using the notations 
$(A, \mu, T)$ and $\tau$ in this section, although we stress that they do not 
correspond to the billiard map and the free path length respectively, 
but to an underlying Gibbs-Markov map and to the height of the Young tower. 
Using a Young tower, there exist:
\begin{itemize}
 \item a Gibbs-Markov map $(A, \mu, T)$ with Markov partition $\Gamma$,
 \item a function\footnote{This function $\tau$ is the time of the next Markovian return to the inducing set; 
 it is not the free path length, as it used to be in Subsection~\ref{subsec:bill}.} 
 $\tau : A \to \N_+$ constant on each element of $\Gamma$,
 with $\mu (\tau \geq n) \leq C_\varepsilon e^{- \varepsilon n}$ 
 for some $\varepsilon$, $C_\varepsilon >0$, and a tower $(A_\tau, \mu_\tau, T_\tau)$ over $(A, \mu, T)$ with roof function $\tau$,
 \item a hyperbolic map $(A_Y, \mu_Y, T_Y)$, where each point in $A_Y$ has 
 two coordinates $(x_u, x_s)$ (the base of the Young tower, which has a box structure indexed by $\Gamma$,
 the coordinate $x_u$ is the coordinate along the unstable manifold, and $x_s$ along the stable manifold; we write $\Gamma_Y$ for the corresponding partition of $A_Y$),
 \item a function $\tau_Y : A_Y \to \N_+$ depending only on $x_u$, and a tower $(A_{Y,\tau}, \mu_{Y,\tau}, T_{Y,\tau})$ over $(A_Y, \mu_Y, T_Y)$ with roof function $\tau_Y$,
 \item a factor map $\pi_Y : A_Y \to A$ such that $\tau_Y = \tau \circ \pi_Y$, 
 which lifts to a factor map on the towers: abusing notations, $\pi_Y (x_u, x_s, k) 
  = (\pi_Y (x_u, x_s), k) \in A_\tau$ for all $(x_u, x_s, k) \in A_{Y, \tau}$,
 \item a factor map $\pi$ from $A_{Y,\tau}$ to the Sinai billiard table.
\end{itemize}

These objects behave well when one works with $\Z^2$-extensions. Let $F_L$ be the function 
describing the jumps for the discrete-time Lorentz gas
(i.e.\  $F_L$ is the function denoted $F$ in Subsection~\ref{subsec:bill}) 
and $F_Y (x_u, x_s) = \sum_{k=0}^{\tau_Y (x_u)-1} F_L \circ \pi (x_u, x_s, k)$. By the construction of the Young tower, $F_L \circ \pi$ depends only on $x_u$, 
and thus quotients through $\pi_Y$ to yield $F : A \to \Z^2$, which is constant on each element of $\Gamma$.

\smallskip

Let $(\widetilde{A}_{Y,\tau}, \widetilde{\mu}_{Y,\tau}, \widetilde{T}_{Y,\tau})$ 
be the system defined by:
\begin{itemize}
 \item $\widetilde{A}_{Y,\tau} = A_{Y,\tau} \times \Z^2$,
 \item $\widetilde{\mu}_{Y,\tau} = \sum_{a \in \Z^2} \mu_{Y,\tau} \otimes \delta_a$,
 \item $\widetilde{T}_{Y,\tau} (x_u, x_s, k, a) = (x_u, x_s, k+1,a)$ if $k < \tau_Y (x_u)-1$, 
 and otherwise $\widetilde{T}_{Y,\tau} (x_u, x_s,\tau(x_u)-1,a) = (T_Y (x_u, x_s), 0,a+F_Y(x_u))$.
\end{itemize}
In the same way, define $(\widetilde{A}_\tau, \widetilde{\mu}_\tau, \widetilde{T}_\tau)$ 
using the system $(A_\tau, \mu_\tau, T_\tau)$ and the function $F$. Then there exist two 
factor maps $\widetilde{\pi}$ and $\widetilde{\pi}_Y$ from $(\widetilde{A}_{Y,\tau}, \widetilde{\mu}_{Y,\tau}, \widetilde{T}_{Y,\tau})$, descending to the discrete-time Lorentz gas (i.e. the $\Z^2$-periodic 
billiard system $\left(\widetilde A,\widetilde\mu,\widetilde T\right)$ defined in Subsection~\ref{subsec:bill}) and 
to $(\widetilde{A}_\tau, \widetilde{\mu}_\tau, \widetilde{T}_\tau)$ respectively.
This construction is summed up in the following diagram:

\begin{equation*}
\begin{tikzcd}
    (\widetilde{A}_\tau, \widetilde{\mu}_\tau, \widetilde{T}_\tau) \arrow{d}{} & (\widetilde{A}_{Y,\tau}, \widetilde{\mu}_{Y,\tau}, \widetilde{T}_{Y,\tau}) \arrow{l}{\widetilde{\pi}_Y} \arrow{r}{\widetilde{\pi}} \arrow{d}{} & \left(\substack{\text{collision map for} \\ \text{the Lorentz gas}}\right) \arrow{d}{}\\
    (A_\tau, \mu_\tau, T_\tau) & (A_{Y,\tau}, \mu_{Y,\tau}, T_{Y,\tau}) \arrow{l}{\pi_Y} \arrow{r}{\pi} & \left(\substack{\text{collision map for} \\ \text{the Sinai billiard}}\right)
\end{tikzcd}
\end{equation*}
In the diagram above, all the downward arrows consist in forgetting the $\Z^2$-coordinate, all the horizontal arrows are measure-preserving, 
and $\widetilde{\pi}_Y$ (but not $\widetilde{\pi}$) acts trivially on the $\Z^2$-coordinate. 

\smallskip

We shall also write, for $x \in A$:
\begin{align*}
 \varphi (x) & := \inf\{n \geq 1 : \ S_n^T F (x) = 0\}\, , \\
 \widetilde{\varphi} (x) & := \sum_{k=0}^{\varphi(x)-1} \tau \circ T^k (x)\, ,
\end{align*}
so that $\varphi$ is the first return time to $A \times \{0\}$ for the underlying $\Z^2$-extension 
of a Gibbs-Markov map, and $\widetilde{\varphi}$ the first return time to $A \times \{0\} \times \{0\}$ 
for $\widetilde{T}_\tau$. Then the map $\widetilde{T}_0 := \widetilde{T}_\tau^{\widetilde{\varphi}}$ acts on 
$A \times \{0\} \simeq A$, and $(A, \mu, \widetilde{T}_0)$ is a measure-preserving ergodic Gibbs-Markov map for 
some refined partition $\Gamma_0$. In the same way, we define $\widetilde{T}_{Y,0} := \widetilde{T}_{Y,\tau}^{\widetilde{\varphi}}$.

\smallskip

Given an observable $f$ defined on the state space of the Lorentz gas ($\Z^2$-periodic billiard map), we define the sum of $f$ 
along an excursion, either until it comes back to the base of the Young tower or to the basis of 
the cell $0$ in $\widetilde{A}_{Y,\tau}$. For $(x_u, x_s) \in A_Y$ and $a \in \Z^2$, let:
\begin{align*}
 G_{Y, \tau} (f) (x_u, x_s,a) & := \sum_{k=0}^{\tau_Y (x_u)-1} f (x_u, x_s,k,a) \\
 G_{Y, \varphi} (f) (x_u, x_s) & := \sum_{k=0}^{\widetilde{\varphi} (x_u)-1} f \circ \widetilde{T}_{Y, \tau}^k (x,0,0) \\
 & = \sum_{k=0}^{\varphi (x_u)-1} G_{Y, \tau} (f) (T_Y^k (x_u, x_s),S_k^{T_Y} F_Y (x_u))\, ,
\end{align*}
and define in the same way $G_\tau (f):A\times\Z^2\rightarrow \mathbb C$ and $G_\varphi (f):A\rightarrow\mathbb C$
for functions $f$ defined on $\widetilde{A}_\tau$.

\subsection{Proof of Proposition~\ref{prop:BillardParInduit}}

The general strategy, close to that of~\cite[Proposition~6.12]{Thomine:2015}, is as follows:
\begin{itemize}
 \item Take a function $f$ defined on the state space of the discrete-time Lorentz gas, 
 uniformly $\eta$-H\"older on the continuity components of the billiard map, 
 with integral zero and such that $\sum_{a \in \Z^2} (1+\ln_+ |a| )^{\frac{1}{2}+\varkappa} \norm{f(\cdot, a)}{\infty} < + \infty$ 
 for some $\varkappa >0$. Lift it to a function $f \circ \widetilde{\pi}$ defined on $(\widetilde{A}_{Y,\tau}, \widetilde{\mu}_{Y,\tau}, \widetilde{T}_{Y,\tau})$.
 \item Add a bounded coboundary $u \circ T - u$ to get $f_+ \circ\widetilde \pi_Y = f \circ \widetilde{\pi} + u \circ T - u$  
(independent from $x_s$ and thus going to the quotient through $\widetilde{\pi}_Y$, 
 so that we only need to work with the Gibbs-Markov extension). 
 \item Check that $G_{Y,\tau} (f \circ \widetilde{\pi})$ satisfies some integrability conditions, 
 then apply~\cite[Lemma~4.16]{PeneThomine:2019} and \cite[Lemma~2.7]{PeneThomine:2019} to 
 show that $G_\varphi (f_+)$ is also integrable enough (the precise conditions shall be described later).
 \item Apply a version of~\cite[Corollary~6.10]{Thomine:2015}, together with \cite[Remark~4.6]{Thomine:2014}, which states:
\end{itemize}

\begin{proposition}[\cite{Thomine:2015}]
\label{prop:2015}
 
 Let $(\widetilde{A}_\tau, \widetilde{\mu}_\tau, \widetilde{T}_\tau)$ be an ergodic 
 and recurrent Markov $\Z^2$-extension of a Gibbs-Markov map $(A, \Gamma, \mu, T)$, 
 of roof function $\tau$ and of step function $F: A \to \Z^2$. Assume that it is aperiodic, 
 that $\tau$ and $F$ belong to $\Lbb^2 (A, \mu_A)$, and that $\sum_{\gamma \in \Gamma} \mu(\gamma) |\tau|_{\Lip (\gamma)}$ 
 is finite. Under these hypotheses, the covariance matrix $\Sigma^2 (F)$ is positive definite, where, 
 for all $u$ and $v$ in $\R^2$:
 \begin{equation*}
  \left( u, \Sigma^2 (F) v \right) 
  = \lim_{N \to + \infty} \frac{1}{N} \int_A \left( \sum_{i=0}^{N-1} F \circ T^i, u \right) \left( \sum_{i=0}^{N-1} F \circ T^i, v \right) \dd \mu.
 \end{equation*}

 Let $f_+$ be a real-valued, measurable function from $\widetilde{A}_\tau$ to $\R$. Assume that:
 \begin{itemize}
  \item $\sup_{0 \leq n \leq \widetilde{\varphi} (x)} \left| \sum_{k=0}^{n-1} f_+ \circ \widetilde{T}_\tau^k (\cdot,0,0) \right| \in \Lbb^q (A, \mu)$ for some $q > 2$,
  \item $\int_A G_\varphi (f_+) \dd \mu = 0$,
  \item $\sum_{\gamma \in \Gamma_0} \mu (\gamma) \sup_{a \in \Z^d} |G_\tau (f_+) (\cdot, a)|_{\Lip (\gamma)}$ is finite.
 \end{itemize}
 Then, for any probability measure $\widetilde{\nu}$ absolutely continuous with respect to $\widetilde{\mu}_\tau$: 
 \begin{equation*}
 \left( \frac{2 \pi \sqrt{\det (\Sigma^2 (F))}}{\ln (n)} \right)^{\frac{1}{2}} \sum_{k=0}^{n-1} f_+ \circ \widetilde{T}^k
 \to \sigma (f_+) L,
 \end{equation*}
 where the convergence is in distribution when the left-hand side is seen as a random variable from $(\widetilde{A}_\tau, \widetilde{\nu})$ 
 to $\R$, where $L$ follows a centered Laplace distribution of variance $1$, and:
 \begin{equation*}
  \sigma^2 (f_+)
  = \int_A G_\varphi (f_+)^2 \dd \mu + 2 \sum_{n=1}^{+\infty} \int_A G_\varphi (f_+) \cdot G_\varphi (f_+) \circ \widetilde{T}_0^n \dd \mu,
 \end{equation*}
 where the limit is taken in the Ces\`aro sense.
\end{proposition}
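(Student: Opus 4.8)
The plan is to reduce the statement to the results of~\cite{Thomine:2015, Thomine:2014}, of which it is a specialization to dimension $d=2$ with a square-integrable step function. The backbone is the decomposition of the Birkhoff sum along excursions of the $\Z^2$-extension away from the cell $A\times\{0\}\times\{0\}$. Writing $\widetilde T_0=\widetilde T_\tau^{\widetilde\varphi}$ for the first-return map to that cell — a Gibbs--Markov map on the refined partition $\Gamma_0$ — one has, for the number $N_n$ of visits to the cell before time $n$,
\begin{equation*}
\sum_{k=0}^{n-1} f_+\circ\widetilde T^k = \sum_{j=0}^{N_n-1} G_\varphi(f_+)\circ\widetilde T_0^j + (\text{boundary term})\, .
\end{equation*}
The first step is therefore to establish a central limit theorem for $G_\varphi(f_+)$ over $(A,\mu,\widetilde T_0)$, and this is where the three hypotheses on $f_+$ enter. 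The $\Lbb^q$ control of the partial excursion sums ($q>2$), combined with~\cite[Lemma~4.16]{PeneThomine:2019} and~\cite[Lemma~2.7]{PeneThomine:2019}, shows that $G_\varphi(f_+)$ is square-integrable with a sufficiently small tail along $\Gamma_0$; the summable-Lipschitz hypothesis on $G_\tau(f_+)$ passes to a summable-Lipschitz bound for $G_\varphi(f_+)$ on $\Gamma_0$; and $\int_A G_\varphi(f_+)\dd\mu=0$ is exactly the required centering. The Gibbs--Markov central limit theorem (Nagaev--Guivarc'h, as in~\cite{HennionHerve:2001}) then yields convergence of $\frac{1}{\sqrt{m}}\sum_{j=0}^{m-1}G_\varphi(f_+)\circ\widetilde T_0^j$ to $\sigma(f_+)\Ncal$, with $\sigma^2(f_+)$ given by the stated Green--Kubo formula, the Ces\`aro limit being forced by the possible lack of absolute convergence.

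Second, I would control the occupation time $N_n$. The sequence $(S_nF)_n$ is, under the stated hypotheses, a recurrent $\Z^2$-valued random walk over a Gibbs--Markov base with square-integrable increments; the classical local limit theorem for such extensions (obtained via the twisted transfer operators recalled in Subsection~\ref{subsec:technical}, see also~\cite{HennionHerve:2001}) gives $\mu(S_nF=0)\sim\Phi(0)/n$ with $\Phi(0)=\bigl(2\pi\sqrt{\det\Sigma^2(F)}\bigr)^{-1}$, hence $\sum_{k=1}^n\mu(S_kF=0)\sim\Phi(0)\ln n$. The Darling--Kac theorem (the Kallianpur--Robbins law in this two-dimensional setting) then gives $N_n/(\Phi(0)\ln n)\Longrightarrow\Ecal$, a standard exponential variable. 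Feeding this together with the induced central limit theorem of the first step into~\cite[Corollary~6.10]{Thomine:2015} — which is precisely designed to turn a central limit theorem for the induced observable plus Darling--Kac behaviour of the return count into a limit theorem for the un-induced sum — one obtains
\begin{equation*}
\left(\frac{1}{\Phi(0)\ln n}\right)^{\frac12}\sum_{k=0}^{n-1} f_+\circ\widetilde T^k \Longrightarrow \sigma(f_+)\sqrt{\Ecal}\,\Ncal\, ,
\end{equation*}
with $\Ecal$ and $\Ncal$ independent, the boundary term being negligible by the same $\Lbb^q$ estimate. Rewriting $(\Phi(0))^{-1}=2\pi\sqrt{\det\Sigma^2(F)}$ and invoking~\cite[Remark~4.6]{Thomine:2014} to record that $\sqrt{\Ecal}\,\Ncal$ is a centered Laplace variable $L$ of variance $1$ gives the claimed convergence.

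It remains to check that $\Sigma^2(F)$ is positive definite. If it were degenerate there would be a non-zero $u\in\R^2$ with $\langle u,F\rangle$ cohomologous to a constant in $\Lbb^2(A,\mu)$; integrating against $\mu$, that constant vanishes, so $\langle u,F\rangle$ would be an $\Lbb^2$-coboundary, which forces $(\langle u,S_nF\rangle)_n$ to be tight and contradicts the aperiodicity and recurrence of the $\Z^2$-extension (equivalently, $F$ would take its values, up to a coboundary, in a proper closed subgroup of $\Z^2$). This is the argument of~\cite{Thomine:2015}, and in the application to the billiard it is moreover automatic since the full extension is ergodic.

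The main obstacle — and the part I would not reproduce, deferring instead to~\cite[\S6]{Thomine:2015} — is the rigorous passage from the induced central limit theorem to the un-induced limit theorem: one must establish uniform integrability of the excursion sums and show that the incomplete last excursion at time $n$ is negligible after division by $\sqrt{\ln n}$, which is exactly what the strict inequality $q>2$ (rather than $q=2$) and the summable-Lipschitz condition buy. Everything else — the local limit theorem, Darling--Kac, the Gibbs--Markov central limit theorem, and the identification of the Laplace law — is standard once these inputs are in place.
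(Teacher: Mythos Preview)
Your sketch is broadly faithful to the argument of~\cite{Thomine:2015}, but note that the paper does not give its own proof of this proposition: it is stated as a quotation of~\cite[Corollary~6.10]{Thomine:2015} combined with~\cite[Remark~4.6]{Thomine:2014}, and is used as a black box in the proof of Proposition~\ref{prop:BillardParInduit}. There is therefore nothing to compare against beyond the citation itself, and your outline --- induce on the cell $A\times\{0\}\times\{0\}$, apply the Gibbs--Markov CLT to $G_\varphi(f_+)$, combine with Darling--Kac for the local time, and identify $\sqrt{\Ecal}\,\Ncal$ as Laplace --- is exactly the strategy those references implement.

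One small caution on your non-degeneracy paragraph: the argument that a degenerate $\Sigma^2(F)$ would force $\langle u,F\rangle$ to be an $\Lbb^2$-coboundary is correct, but the conclusion you draw (``contradicts the aperiodicity and recurrence of the $\Z^2$-extension'') is slightly off. Tightness of $(\langle u,S_nF\rangle)_n$ for an irrational $u$ does not by itself contradict recurrence; what it contradicts is ergodicity of the $\Z^2$-extension (the coboundary equation produces a non-trivial $\widetilde T$-invariant function), and this is how~\cite{Thomine:2015} argues. In the billiard application the extension is ergodic, so --- as you say --- the issue does not arise.
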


\begin{proof}[Proof of Proposition~\ref{prop:BillardParInduit}]

Let us go through the assumptions of Proposition~\ref{prop:2015} for the Young towers associated with Sinai billiards. 

\medskip
\textbf{General assumptions on the system}
\smallskip

The bidimensional Lorentz gas is ergodic and recurrent for the Liouville measure; by construction, 
so is $(\widetilde{A}_{Y,\tau}, \widetilde{\mu}_{Y,\tau}, \widetilde{T}_{Y,\tau})$. As a 
factor map, $(\widetilde{A}_\tau, \widetilde{\mu}_\tau, \widetilde{T}_\tau)$ is then also 
ergodic and recurrent.

\smallskip

The theorem stays true if one drops the hypothesis of aperiodicity on the extension; 
the full reduction can be found in the proof of Proposition~2.11 in~\cite{PeneThomine:2019}.


\smallskip

The roof function $\tau_Y$ has an exponential tail, and as such belongs to $\Lbb^{2+\varepsilon} (A, \mu)$. Since the billiard has 
finite horizon, the function $F_L$ is uniformly bounded. Hence, the size of the jumps $F_Y  = S_{\tau_Y} (F_L \circ \pi)$ 
is in $O(\tau_Y)$, and thus also belongs to $\Lbb^{2+\varepsilon} (A, \mu)$. By construction of the Young towers, $\tau_Y$ 
is constant on the elements of the Markov partition. All these properties goes through the quotient 
to $\tau$, and in particular $\sum_{\gamma \in \Gamma} \mu(\gamma) |\tau|_{\Lip (\gamma)} = 0$.

\medskip
\textbf{Defining a coboundary}
\smallskip


Let $f$ be an observable of the collision section for the Lorentz gas which is uniformly $\eta$-H\"older 
on the continuity sets of the billiard map. The space $A_{\tau,Y}$ has a box structure, with, by Young's 
construction, a distinguished piece of unstable manifold on the basis $A_Y$. Let us choose the coordinates 
$(x_u, x_s)$ so that this piece of unstable manifold is $\{x_s=0\}$. Then we get a map:
\begin{equation*}
 p_+ : \left\{ 
 \begin{array}{lll}
  \widetilde{A}_{Y, \tau} & \to & \{(x_u, 0, k,a) : \ 0 \leq k < \tau (x_u), \ a \in \Z^2\} \\
  (x_u, x_s, k, a) & \mapsto & (x_u, 0, k,a)
 \end{array}
 \right. ,
\end{equation*}

The space $A_{Y, \tau}$ is also endowed with a distance $d_{Y, \tau}$ 
satisfying the properties (P3) and (P4a) in~\cite{Young:1998}, namely, there exist constants 
$C > 0$ and $\alpha \in (0,1)$ such that, for all $x_u$, $x_s$, $x_u'$, $x_s'$:
\begin{itemize}
 \item $d_{Y, \tau} (T_{Y,\tau}^n (x_u, x_s, 0), T_{Y,\tau}^n (x_u, x_s', 0)) \leq C \alpha^n$ (contraction along stable leaves),
 \item $d_{Y, \tau} (T_{Y,\tau}^n (x_u, x_s, 0), T_{Y,\tau}^n (x_u', x_s, 0)) \leq C \alpha^{s_0(x_u,x_u')-n}$ for $0 \leq n < s_0 (x_u, x_u')$ 
(backward contraction along unstable leaves),
\end{itemize}
where $s_0$ is a separation time. In addition, up to working with some power of $d_{Y, \tau}$, we may assume that 
$f \circ \widetilde{\pi} (\cdot, \cdot, \cdot, a)$ is Lipschitz for $d_{Y, \tau}$ uniformly in $a \in \Z^2$.

\smallskip

The function $f \circ \widetilde{\pi}$ is defined on $A_{Y,\tau} \times \Z^2$.
To get a function defined on $A_\tau \times \Z^2$, we use a classical trick by Bowen~\cite{Bowen:1975}, 
also used in the proof of~\cite[Proposition~6.12]{Thomine:2015}. While we shall not repeat 
the computations, let us ouline the main arguments. Define:
\begin{equation*}
 u(x_u, x_s, k, a) 
 := \sum_{n = 0}^{+ \infty} \left[ f \circ \widetilde{\pi} \circ \widetilde{T}_{Y,\tau}^n (x_u, x_s, k,a) - f \circ \widetilde{\pi} \circ \widetilde{T}_{Y,\tau}^n \circ p_+ (x_u, x_s, k,a) \right] \, .
\end{equation*}
The function $u$ is zero on $\{x_s=0\}$, and the contraction along stable leaves implies that $u$ is bounded. 
The function $f_+ := f \circ \widetilde{\pi} + u \circ \widetilde{T}_{Y,\tau} - u$ 
also does not depend on the $x_s$ coordinate. Abusing notations, we may see $f_+$ as defined on $\widetilde{A}_\tau$. 
Finally, there exists a constant $C'$ such that, for all $x_u$, $x_u'$ in the same element of $\Gamma$, for all $x_s$ and all $a \in \Z^2$:
\begin{equation}
 \label{eq:RegulariteU}
 | u(x_u, x_s, 0, a) - u(x_u', x_s, 0, a) | 
 \leq C' \alpha^{\frac{s_0(x_u, x_u')}{2}}. 
\end{equation}

Since $S_n f = S_n f_+ \circ \widetilde{\pi} + u \circ \widetilde{T}_{Y,\tau}^n - u$ and $u$ is bounded, it is enough to 
prove the convergence in distribution 
\begin{equation*}
 \left( \frac{2 \pi \sqrt{\det (\Sigma^2 (F))}}{\ln (n)} \right)^{\frac{1}{2}} \sum_{k=0}^{n-1} f_+ \circ \widetilde{T}_\tau^k
 \to \sigma (f_+) \sqrt{\Ecal} \Ncal,
\end{equation*}
with respect to the probability distribution $\mu \otimes \delta_0 \otimes \delta_0 \in \Pcal (\widetilde{A}_\tau)$. 
The convergence
\begin{equation*}
 \left( \frac{2 \pi \sqrt{\det (\Sigma^2 (F))}}{\ln (n)} \right)^{\frac{1}{2}} \sum_{k=0}^{n-1} f \circ \widetilde{\pi} \circ \widetilde{T}_{Y, \tau}^k
 \to \sigma (f_+) \sqrt{\Ecal} \Ncal,
\end{equation*}
with respect to $\mu_Y \otimes \delta_0 \otimes \delta_0 \in \Pcal (\widetilde{A}_{Y,\tau})$ then follows, 
and the convergence with respect to any absolutely continuous probability measure on $\widetilde{A}_{Y,\tau}$ 
follows from~\cite[Theorem~1]{Zweimuller:2007}. In addition, since $f_+ - f \circ \widetilde{\pi}$ is a bounded 
coboundary and adding a bounded coboundary does not change the asymptotic variance in the central limit theorem,
\begin{equation}
 \label{eq:VarianceInduit}
 \sigma (f_+) 
 = \int_{A_Y} G_{Y, \varphi} (f\circ \widetilde{\pi})^2 \dd \mu_Y + 2 \sum_{n=1}^{+\infty} \int_{A_Y} G_{Y, \varphi} (f\circ \widetilde{\pi}) \cdot G_{Y, \varphi} (f\circ \widetilde{\pi}) \circ \widetilde{T}_{Y, 0}^n \dd \mu_Y.
\end{equation}

All is left is to check the integrability and regularity assumptions on $f_+$.

\medskip
\textbf{Integrability of $f_+$}
\smallskip

We start with the first condition on $f_+$ in Proposition~\ref{prop:2015}, which is the hardest. Since
\begin{align*}
 \sup_{0 \leq n \leq \widetilde{\varphi} (x)} \left| \sum_{k=0}^{n-1} f_+ \circ \widetilde{T}_\tau^k (\cdot,0,0) \right| 
 & = \sup_{0 \leq n \leq \widetilde{\varphi} (x)} \left| \sum_{k=0}^{n-1} (f \circ \widetilde{\pi} + u \circ \widetilde{T}_{Y,\tau} -u) \circ \widetilde{T}_{Y,\tau}^k (\cdot,0,0) \right| \\
 & \leq \sup_{0 \leq n \leq \widetilde{\varphi} (x)} \left| \sum_{k=0}^{n-1} f \circ \widetilde{\pi} \circ \widetilde{T}_{Y,\tau}^k (\cdot,0,0)\right| + 2 \norm{u}{\infty} \\
 & \leq G_{Y, \varphi} (|f \circ \widetilde{\pi}|) + 2 \norm{u}{\infty},
\end{align*}
it is enough to check that $G_{Y, \varphi} (|f \circ \widetilde{\pi}|) \in \Lbb^q (A_Y, \mu_Y)$ for some $q>2$. 
For $(x_u, a) \in A \times \Z^2$, let:
\begin{equation*}
 \overline{f} (x_u, k,a) 
 := \sup_{x_s} |f \circ \widetilde{\pi}| (x_u, x_s, k,a).
\end{equation*}
Then $G_{Y, \varphi} (|f \circ \widetilde{\pi}|) (x_u, x_s) \leq G_\varphi (\overline{f}) (x_u)$, so 
it is enough to check that $G_\varphi (\overline{f}) \in \Lbb^q (A_Y, \mu_Y)$ for some $q>2$.

\smallskip

For all $a \in \Z^2 \setminus \{0\}$, let
\begin{equation*}
 N_a (x_u) 
 := G_\varphi (\mathbf{1}_{(A \times \{0\} \times \{a\})})
\end{equation*}
be the number of times an excursion from $A \times \{0\} \times \{0\}$ hits the basis of the Young tower 
at $A \times \{0\} \times \{a\}$ before going back to $A \times \{0\} \times \{0\}$. Let $A_a := \{N_a \neq 0\} \subset A$, 
and 
\begin{equation*}
 \mu_a 
 := \mu(A_a)^{-1} \widetilde{T}_{A \times \{0,a\}*} \mu_{|A_a} \otimes \delta_0 \, ,
\end{equation*}
where $\widetilde{T}_{A \times \{0,a\}}$ is the map induced by $\widetilde{T}$ on $A \times \{0, a\}$.
In other words, $\mu_a \in \Pcal (A \times \{a\}) \simeq \Pcal (A)$ is the distribution of a point 
at which a trajectory starting from $A \times \{0\}$ enters $A \times \{a\}$, conditioned by the fact 
that this trajectory enters $A \times \{a\}$ before going back to $A \times \{0\}$. Then the 
distribution of $N_a-1$ for $\mu (\cdot |A_a)$ is the distribution of the first non-negative 
hitting time $\varphi_{-a}$ of $A_{-a}$ for $\mu_a$.

\smallskip

By~\cite[Lemma~4.8]{PeneThomine:2019}, the densities $\de \mu_a / \de \mu$ are in $\Lbb^\infty (A, \mu)$ 
and uniformly bounded in $a$. We apply~\cite[Lemma~4.16]{PeneThomine:2019} to the family of measures $(\mu_a)_{a \in \Z^2 \setminus \{0\}}$ 
and the function $\mathbf{1}_A$. Note that $\alpha (a) = \mu(A_a) = \mu(A_{-a})$ in the cited article. 
Hence, for all $q \in (2, \infty)$, there exists a constant $C > 0$ such that, for all $a\in\Z^2$:
\begin{align*}
 \norm{G_\varphi (\overline{f} \mathbf{1}_{A_\tau \times \{a\}})}{\Lbb^q (A, \mu)} 
 & = \mu (A_a)^{\frac{1}{q}} \norm{G_\varphi (\overline{f} \mathbf{1}_{A_\tau \times \{a\}})}{\Lbb^q (A, \mu (\cdot |A_a))} \\
 & = \mu (A_a)^{\frac{1}{q}} \norm{\sum_{k=0}^{\varphi_{-a} (x)} G_\tau (\overline{f}) (\widetilde{T}_0^k (x), a)}{\Lbb^q (A, \mu_a)} \\
 & \leq C\mu (A_a)^{\frac{1}{q}-1} \norm{G_\tau (\overline{f}) (\cdot, a)}{\Lbb^q (A, \mu)}.
\end{align*}
By~\cite[Corollary~2.9]{PeneThomine:2019} and~\cite[Proposition~2.6]{PeneThomine:2019}, 
with $\alpha=d=2$ and $L\equiv 1$, 
\begin{equation*}
 \mu (A_a) 
 = \Theta \left(\frac{1}{1+\ln_+ |a|} \right).
\end{equation*}
Hence, up to taking a larger constant $C$,
\begin{equation}
 \label{eq:MinkowskiNoyauPotentiel}
 \norm{G_\varphi (\overline{f})}{\Lbb^q (A, \mu)} 
 \leq \sum_{a \in \Z^2} \norm{G_\varphi (\overline{f} \mathbf{1}_{A_\tau \times \{a\}})}{\Lbb^q (A, \mu)} 
 \leq C \sum_{a \in \Z^2} (1+\ln_+ |a|)^{1-\frac{1}{q}} \norm{G_\tau (\overline{f}) (\cdot, a)}{\Lbb^q (A, \mu)}.
\end{equation}
In addition, focusing on a single term $\norm{G_\tau (\overline{f}) (\cdot, a)}{\Lbb^q (A, \mu)}$, we get:
\begin{align}
 \norm{G_\tau (\overline{f}) (\cdot, a)}{\Lbb^q (A, \mu)} 
 & \leq \norm{\sum_{r\ge 1} \mathbf{1}_{\{\tau=r\}}\sum_{k=0}^{r-1} \norm{f (\cdot, a+S_k F)}{\Lbb^\infty(\{\tau=r\})}}{\Lbb^q (A, \mu)} \nonumber \\
 & = \norm{\sum_{r\ge 1}\mathbf{1}_{\tau=r} \sum_{k=0}^{r-1} \sum_{a' \in \Z^2} \norm{f (\cdot, a')}{\infty} \mathbf{1}_{a' = a+S_k F} }{\Lbb^q (A, \mu)} \nonumber \\
 & \leq \sum_{a' \in \Z^2} \norm{f (\cdot, a')}{\infty} \sum_{r\ge 1} \sum_{k=0}^{r-1} \mu (\tau=r,\, S_k F = a'-a)^{\frac{1}{q}}. \label{eq:Convolution}
\end{align}
Set $h_q (a) := C (1+\ln_+ |a|)^{1-\frac{1}{q}}$ and $g_q (a) := \sum_{r\ge 1} \sum_{k=0}^{r-1} \mu (\tau=r,\, S_k F =
 a)^{\frac{1}{q}}$. 
Equations~\eqref{eq:MinkowskiNoyauPotentiel} and~\eqref{eq:Convolution} together imply that:
\begin{equation}
 \label{eq:BorneGTau}
 \norm{G_\varphi (\overline{f})}{\Lbb^q (A, \mu)} \leq \sum_{a \in \Z^2} (h_q * g_q) (a) \norm{f (\cdot, a)}{\infty}\, .
\end{equation}
If $S_k F = a$ with $k\leq r-1$, then $r \geq k \geq |a|/\norm{F}{\infty}$. Since $\mu (\tau \geq k) \leq C_\varepsilon e^{- \varepsilon k}$, 
there exists a constant $C' (q, \varepsilon)$ such that:
\begin{equation*}
 g_q (a)
 \leq \sum_{r\geq |a|/\norm{F}{\infty} }r\, \mu(\tau=r)
 \leq C' (q, \varepsilon)e^{-\frac{\varepsilon |a|}{2q \norm{F}{\infty}}}.
\end{equation*}
All is left is to estimate $h_q * g_q$. Let $a \in \Z^2 \setminus \{0\}$. 
We split $\Z^2$ into rings:
\begin{equation*}
 A_n (a) 
 = \{a' \in \Z^2: \ e^n |a| \leq |a'| < e^{n+1} |a| \},
\end{equation*}
with $n \geq 1$, and a central disk $A_0 (a)$. We have $\Card (A_n (a)) = \Theta (e^{2n} |a|^2)$ and, for all $a' \in A_n (a)$,
\begin{equation*}
 \left\{ 
 \begin{array}{lll}
  h_q (a') & \leq & h_q (a) + C (1-q^{-1}) (n+1), \\
  g_q (a-a') & \leq & C' (q, \varepsilon) e^{-\frac{\varepsilon (e^n-1) |a|}{2 q \norm{F}{\infty}}}.
 \end{array}
 \right.
\end{equation*}
Summing over all $a' \in \Z^2$ yields, for some constant $C' >0$:
\begin{align}
 h_q * g_q (a) 
 & = \sum_{n = 0}^{+ \infty} \sum_{a' \in A_n (a)} h_q (a') g_q (a-a') \nonumber \\
 & \leq \sum_{a' \in \Z^2} h_q (a) g_q (a-a') + C (1-q^{-1}) \sum_{a' \in A_0 (a)} g_q (a-a') + C (1-q^{-1}) \sum_{n = 1}^{+ \infty} (n+1) \sum_{a' \in A_n (a)} g_q (a-a') \nonumber \\
 & \leq [h_q (a) + C (1-q^{-1})] \norm{g_q}{\ell^1 (\Z^2)} + C' \sum_{n = 1}^{+ \infty} (n+1) e^{2n} |a|^2 e^{-\frac{\varepsilon (e^n-1) |a|}{2 q \norm{F}{\infty}}}. \label{eq:DecoupageEnAnneaux}
\end{align}
The sum in Equation~\eqref{eq:DecoupageEnAnneaux} is finite for all $a$. Each term in the sum converges to $0$ 
as $a$ goes to infinity (and thus is bounded). In addition, the function $u \mapsto  u^2 e^{-\frac{\varepsilon (e^n-1) u}{2q \norm{F}{\infty}}}$ 
is decreasing on $[(4q\norm{F}{\infty})/(e^n-1),+\infty)$, and thus on $[1,+\infty)$ for all large enough $n$.
Hence, for all large enough $n$ and all $a \in \Z^2 \setminus \{0\}$,
\begin{equation*}
 (n+1) e^{2n} |a|^2 e^{-\frac{\varepsilon (e^n-1) |a|}{2q \norm{F}{\infty}}} 
 \leq (n+1) e^{2n} e^{-\frac{\varepsilon (e^n-1)}{2q \norm{F}{\infty}}},
\end{equation*}
which is summable in $n$. Hence the sum is bounded in $a$. Since $h_q$ is bounded from below, we finally get $h_q * g_q  = O(h_q)$.

\smallskip

Let $\varkappa >0$, and $f$ be such that $\sup_{a \in \Z^2} (1+\ln_+ |a|)^{\frac{1}{2}+\varkappa} \norm{f (\cdot, a)}{\infty} < +\infty$. 
Without loss of generality, we assume that $\varkappa < 1/2$. Taking $q = \frac{2}{1-2 \varkappa}$, by Equation~\eqref{eq:BorneGTau}, 
the function $G_\varphi (\overline{f})$ belongs to $\Lbb^q (A, \mu)$.

\medskip
\textbf{Remaining conditions on $f_+$}
\smallskip

Let us focus on the last two conditions for $G_\varphi (f)$. Since $f$ is integrable and has integral zero, so does 
$f \circ \widetilde{\pi}$. By Kac's formula, $G_{Y, \varphi} (f)$ is integrable and:
\begin{equation*}
 \int_{A_Y} G_{Y, \varphi} (f) \dd \mu_Y 
 = \int_{\widetilde{A}_{\tau,Y}} f \dd \widetilde{\mu}_{Y, \tau} 
 = 0.
\end{equation*}
Since $G_\varphi (f_+) - G_{Y, \varphi} (f)$ is a bounded coboundary, 
$G_\varphi (f_+)$ also has integral zero.

\smallskip

Finally, let us check the regularity condition on $G_\varphi (f_+)$. Summing the identity $f_+ := f \circ \widetilde{\pi} + u \circ \widetilde{T}_{Y,\tau} - u$ on the 
height of the tower $A_{Y, \tau}$ yields, for all $x_u \in A$ and $a \in \Z^2$,
\begin{equation*}
 G_\tau (f_+) (x_u, a) 
 = G_\tau (f \circ \widetilde{\pi}) (x_u, 0, a) + u (T_Y(x_u,0), 0, a+F(x_u)) - u (x_u, 0, 0, a).
\end{equation*}
The space $A$ can be endowed with a metric $\alpha^s$, where $s$ is the separation time 
for the Gibbs-Markov map $(A, \mu, T)$ and $\alpha \in (0,1)$ is close enough to $1$. 
As $s \leq s_0$, we have $\alpha^{s_0} \leq \alpha^s$, and $\alpha^{s_0-\tau} \leq \alpha^{-1} \alpha^s$ 
if $s_0 \geq \tau$ (so on each element of the partition $\Gamma$). Given $x_u$, $x_u'$ in 
the same element of $\Gamma$,
\begin{align*}
 |G_\tau (f \circ \widetilde{\pi}) (x_u, 0, a)-G_\tau (f \circ \widetilde{\pi}) (x_u', 0, a)| 
 & \leq C |f \circ \widetilde{\pi}|_{\Lip (d_{Y, \tau})} \sum_{k=0}^{\tau(x_u)-1} \alpha^{s_0 (x_u, x_u')-k} \\
 & \leq \frac{C \alpha}{1-\alpha} |f \circ \widetilde{\pi}|_{\Lip (d_{Y, \tau})} \alpha^{s_0 (x_u, x_u')-\tau (x_u)} \\
 & \leq \frac{C}{1-\alpha} |f \circ \widetilde{\pi}|_{\Lip (d_{Y, \tau})} \alpha^{s (x_u, x_u')},
\end{align*}
so the function $x_u \mapsto G_\tau (f \circ \widetilde{\pi}) (x_u, 0, a)$ is Lipschitz 
for the distance $\alpha^s$ on each element of $\Gamma$, uniformly in $a \in \Z^2$ and in $\Gamma$.
\smallskip

By Equation~\eqref{eq:RegulariteU}, the function $u$ is uniformly $1/2$-H\"older for the distance 
$\alpha^{s_0}$ (and thus for the distance $\alpha^s$) on each unstable leaf in $A_Y \times \Z^2$. 
Up to increasing the value of $\alpha$, we may assume that $u$ is actually Lipschitz. Since applying $T_Y$ multiplies 
$\alpha^s$ by at most $\alpha^{-1}$, the function $x_u \mapsto u (T_Y(x_u, 0), 0, a+F(x_u))$ 
is also Lipschitz for the distance $\alpha^s$ on each element of $\Gamma$, 
uniformly in $a \in \Z^2$ and in $\Gamma$. Hence, $f_+$ is also Lipschitz for the 
distance $\alpha^s$ on each element of $\Gamma$, uniformly in $a \in \Z^2$ and in $\Gamma$, 
and thus $G_\varphi (f_+)$ satisfies the regularity condition of Proposition~\ref{prop:2015} 
by~\cite[Lemma~6.5]{Thomine:2015}.
\end{proof}

\begin{remark}[Infinite horizon billiards]

 Young towers are still available for infinite horizon Lorentz gases~\cite{ChernovZhang:2005}, 
although the height of the tower only has a polynomial tail: $\mu (\tau \geq n) = O(n^{-2})$. 
In the finite horizon case, we used the facts that jumps in the billiard are bounded and 
that the tails of $\tau$ decay exponentially to control $g_q$; both fail in the infinite horizon setting.

\smallskip

Moreover it is not known whereas a spectral local limit theorem analogous to Condition~\eqref{hyp:0} 
holds (with $\ell$ replaced by ${\ell\log \ell}$) in the infinite horizon setting. Using Young towers, 
in~\cite{SzaszVarju:2007}, Sz\'asz and Varj\'u proved estimates analogous to our Hypothesis~\ref{hyp:HHH} 
under a weaker form, namely with $\Lcal(\Bcal,\Bcal)$ replaced by $\Lcal(\Bcal,\Lbb^1(\mu))$.
However, Condition~\eqref{hyp:0} with $\Lcal(\Bcal,\Bcal)$ being replaced by $\Lcal(\Bcal,\Lbb^1(\mu))$ 
would not be enough to adapt our proof of Theorem~\ref{thm:gene}, because we use iterations of operators
$Q_{\ell,a}:h\mapsto P^\ell \left(\mathbf{1}_{\{S_\ell F=a\}}\, h\right)$.
\end{remark}

\section*{Acknowledgments}
This research has been done mostly in the Mathematical Departments of Brest and Orsay Universities, and also at the Institut Henri Poincar\'e that we thank for their hospitalities.
FP is grateful to the Institut Universitaire de France (IUF) for its important support.

\end{document}